\documentclass[11pt,british,reqno]{amsart}

\usepackage[T1]{fontenc}

\usepackage{babel,eucal,url,amssymb,enumerate,booktabs,amscd,graphics}

\textwidth=15,5cm \oddsidemargin=-0.4cm \evensidemargin=-0.4cm

\theoremstyle{plain}
\newtheorem{lemma}{Lemma}[section]

\newtheorem{proposition}[lemma]{Proposition}
\newtheorem{corollary}[lemma]{Corollary}
\newtheorem{theorem}[lemma]{Theorem}
\newtheorem{remark}[lemma]{Remark}

\newcommand{\Lie}[1]{\operatorname{\textsl{#1}}}
\newcommand{\lie}[1]{\operatorname{\mathfrak{#1}}}

\newcommand{\SO}{\Lie{SO}}

\newcommand{\Sp}{\Lie{Sp}}
\newcommand{\asp}{\lie{sp}}
\newcommand{\Spin}{\Lie{Spin}}
\newcommand{\SU}{\Lie{SU}}

\newcommand{\Un}{\Lie{U}}

\newcommand{\Gtwo}{\ifmmode{{\rm G}_2}\else{${\rm G}_2$}\fi}

 \newcommand{\cyclic}{\mathop{\kern0.9ex{{+}\kern-2.2ex\raise-.28ex\hbox{\Large\hbox
 {$\circlearrowright$}}}}}

\def\sideremark#1{\ifvmode\leavevmode\fi\vadjust{\vbox to0pt{\vss
 \hbox to 0pt{\hskip\hsize\hskip1em
 \vbox{\hsize2.5cm\tiny\raggedright\pretolerance10000
 \noindent #1\hfill}\hss}\vbox to8pt{\vfil}\vss}}}%


\input xy
\xyoption{all}

\newfont{\eusm}{eusm10 scaled \magstep1}
\newfont{\eusmiii}{eusm10 scaled \magstep3}



\title[Sasakian structures on tangent sphere bundles]{Sasakian structures on tangent sphere bundles of compact rank-one symmetric spaces}

\author[J.~C.~Gonz{\'a}lez-D{\'a}vila]{J.~C.~Gonz{\'a}lez-D{\'a}vila}
\address{Departamento de Matem\'aticas, Estad\'istica e Investigaci\'on
Ope\-ra\-tiva, University of La Laguna, 38200 La Laguna, Tenerife, Spain.}
\email{jcgonza@ull.es}

\thanks{Research partially supported by the AEI (Spain) and FEDER project PID2019-105019GB-C21}
\keywords{Sasakian structures, tangent sphere bundles, compact rank-one symmetric spaces, projective Stiefel manifolds, restricted roots}
\subjclass{53C30, 
               53C35,  
               53D10.  
}



\begin{document}

\maketitle

\begin{abstract} A positive answer is given to the existence of Sasakian structures on the tangent sphere bundle of some Riemannian manifold whose sectional curvature is not cons\-tant. Among other results, it is proved that the tangent sphere bundle $T_{r}(G/K),$ for any $r> 0,$ of a compact rank-one symmetric space $G/K,$ not necessarily of constant sectional curvature, admits a unique K-contact structure whose characteristic vector field is the stan\-dard field of $T(G/K)$. Such a structure is in fact Sasakian and it can be expressed as an induced structure from an almost Hermitian structure on the punctured tangent bundle $T(G/K)\setminus \{\mbox{zero section}\}.$
\end{abstract}

\section{Introduction}
The tangent sphere bundle of radius $r>0$ over a Riemannian manifold $(M, g)$ is the hypersurface $T_{r}M = \{u\in TM\colon g(u,u)= r^{2}\}$ of the tangent bundle $TM.$ There is an extensive bibliography on the Riemannian geometry of $T_{r}M$ equipped with the induced metric $\widetilde{g}^{S}$ from the Sasaki metric $g^{S},$ specially about the unit tangent sphere bundle $T_{1}M.$ See E. Boeckx and L. Vanhecke \cite{BV, BV1} and G. Calvaruso \cite{Ca} for surveys of $T_{1}M$ and O. Kowalski and M. Sekizawa \cite{KS, KS1, KS2} of $T_{r}M,$ for an arbitrary radius $r>0.$ The induced metric on $T_{1}M$ from a $g$-natural metric, which generalizes the Sasaki and the Cheeger-Gromoll metric, has been treated by K. M. T. Abbassi, G. Calvaruso and M. Sarih (see, for example, \cite{AC, AC1, AS}).

With respect to the {\em standard almost complex structure} $J,$ the Sasaki metric on $TM$ is Hermitian, in fact it is almost K\"ahler \cite{TO}. Then $T_{r}M,$ as hypersurface of $(TM,J,g^{S}),$ inherits an almost contact metric structure $(\varphi,\xi,\eta, \widetilde{g}^{S})$ and the unit vector field $\xi = -JN,$ where $N$ is the outward normal unit vector field to the singular foliation $\{T_{r}M\}_{r\geq 0},$  is defined on $TM\setminus \{\mbox{\rm zero section}\}.$ We say that $\xi$ is the {\em standard vector field} of $TM.$

On $T_{1}M,$ the pair $(\frac{1}{2}\eta,\frac{1}{4}\widetilde{g}^{S})$ becomes a contact metric structure and Tashiro \cite{Tash} proved it is $K$-contact if and only if the base manifold $(M,g)$ has constant sectional curvature 1. Moreover, in this case, $T_{1}M$ is Sasakian. For $g$-natural metrics, $(M,g)$ must be of constant sectional curvature \cite[Theorem 2]{AC} and then, Sasakian too. Sasakian manifolds can be considered in many respects as the class analogous to that of the K\"ahler manifolds for the odd dimensional case. Principal circle bundles over K\"ahler manifolds, together with the standard structure on ${\mathbb R}^{2n+1},$ are the best known examples (see, for example, \cite[Ch. 6]{Bl}).  

 The main objective of this paper is to find Sasakian structures on tangent sphere bundles $T_{r}M,$ for any radius $r>0,$ over Riemannian manifolds $(M, g)$ with non-constant sectional curvature. As far as the author knows, there are no examples satisfying these conditions in the literature.
 
 The most natural Riemannian manifolds $(M, g)$ in this search should be complex or quaternionic projective spaces, or even the Cayley plane since the tangent sphere bundle of any compact rank-one symmetric space $G/K$ is, among other properties, $G$-homogeneous (see \cite{BPV, MT}) and its standard vector field $\xi$ is $G$-invariant. In order to achieve our objective, it is necessary to go further and carry out a detailed study of compact rank-one symmetric spaces and their tangent bundles and tangent sphere bundles. Such a study is included in Sections $3$ and $4,$ while in Section $2$ some basic concepts about almost contact metric structures and homogeneous manifolds are exposed.
 
 Given a compact rank-one symmetric space $G/K,$ we set a Cartan decomposition ${\mathfrak g} = {\mathfrak m}\oplus {\mathfrak k}$ and a Cartan subspace ${\mathfrak a}={\mathbb R}\{X\}$ of ${\mathfrak m}.$ Then the punctured tangent bundle $D^{+}(G/K):= T(G/K)\setminus \{\mbox{zero section}\}$ can be identified with $G/H\times {\mathbb R}^{+}$ and $T_{r}(G/K),$ for each $r>0,$ with the quotient $G/H$ via $G$-equivariant diffeomorphisms, where $H$ is the closed subgroup $H = \{k\in K\colon {\rm Ad}_{k} X=X\}$ of $K$  (Proposition \ref{phomo}). 

 The homogeneity of $T_{r}(G/K)$ is discussed in Section $4,$ where the quotient expressions $G/H,$ for each compact rank-one symmetric space, together with the sphere-homogene\-ous fibration associated to the projection $\pi^{T}\colon T_{r}(G/K) = G/H\to G/K$ are given. As a direct consequence, a natural diffeomorphism is established between $T_{r}{\mathbb K}{\mathbf P}^{n}$ and the ${\mathbb K}$-{\em projective Stiefel manifold} $W_{2}({\mathbb K}^{n+1}),$ ${\mathbb K}$ being the field of real numbers, complex numbers or quaternions. 
 
 On the other hand,  we consider the system $\Sigma$ of {\em restricted roots} of $({\mathfrak g},{\mathfrak k},{\mathfrak a}),$ which is either $\Sigma = \{\pm \varepsilon\}$ or $\Sigma=\{\pm\varepsilon,\pm\varepsilon/2\},$ where $\varepsilon\in ({\mathfrak a}^{\mathbb C})^{*},$ together with the subspaces    \[
 {\mathfrak m}_{\lambda} = \{\xi\in {\mathfrak m}\colon {\rm ad}^{2}_{X}\xi = \lambda^{2}(X)\xi\},\quad {\mathfrak k}_{\lambda}= \{\zeta\in {\mathfrak k}\colon {\rm ad}^{2}_{X}\zeta = \lambda^{2}(X)\zeta\},\quad \lambda\in \Sigma^{+}.
 \]
Because the tangent $T_{o_{H}}(G/H)$ at the origin $o_{H} = \{H\}$ is identified with $\overline{\mathfrak m} = ({\mathfrak a}\oplus {\mathfrak m}_{\varepsilon}\oplus {\mathfrak m}_{\varepsilon/2}) \oplus ({\mathfrak k}_{\varepsilon}\oplus {\mathfrak k}_{\varepsilon/2})$ and this decomposition is ${\rm Ad}(H)$-irreducible, all the $G$-invariant metrics on $T_{r}(G/K)$ are obtained (Proposition \ref{metricB}). In particular, the induced metric $\tilde{g}^{S}$ of Sasaki metric is analyzed in this context, which allows obtaining, among other results, a version of Tashiro's Theorem for tangent sphere bundles of {\em any} radius (Corollary \ref{Tashiro}).

In the last section, Section 5, we use all the material developed in the previous sections to prove our main result.  
 \begin{theorem}\label{main} Let $G/K$ be a compact rank-one symmetric space and let $\xi$ be the standard vector field of $T(G/K).$ Then, for each smooth function $f\colon {\mathbb R}^{+}\to {\mathbb R}^{+},$ there exists a $G$-invariant almost Hermitian structure on $T(G/K)\setminus\{\mbox{zero section}\}$ such that its induced almost contact metric structure on $T_{r}(G/K),$ for all $r>0,$ is $K$-contact and $f(r)^{-1}\xi$ is the characteristic vector field.
 
  Such $K$-contact structure on $T_{r}(G/K),$ for each $r>0,$ is Sasakian and it is given by $(\kappa^{-1}\xi,\tilde{\mathbf g}^{\kappa}),$ where $\kappa = f(r)$ and the $G$-invariant metric $\tilde{\mathbf g}^{\kappa}$ is determined by 
\begin{equation}\label{fmain}
\tilde{\mathbf g}^{\kappa}_{o_{H}} (\cdot,\cdot) = \kappa^{2}\langle\cdot,\cdot\rangle_{\mathfrak a} + \frac{\kappa}{2}\langle\cdot,\cdot\rangle_{{\mathfrak m}_{\varepsilon}\oplus{\mathfrak k}_{\varepsilon}} + \frac{\kappa}{4}\langle\cdot,\cdot\rangle_{{\mathfrak m}_{\varepsilon/2}\oplus{\mathfrak k}_{\varepsilon/2}}.
\end{equation}
Moreover, $(\kappa^{-1}\xi,\tilde{\mathbf g}^{\kappa})$ is the unique $G$-invariant $K$-contact structure on $T_{r}(G/K)$ whose cha\-rac\-teristic vector field is $\kappa^{-1}\xi.$
\end{theorem}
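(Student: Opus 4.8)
The plan is to reduce everything to linear algebra on $\overline{\mathfrak m}$ at the base point $o_H$ and then propagate by $G$-invariance, exploiting the $\mathrm{Ad}(H)$-irreducible decomposition $\overline{\mathfrak m}=\mathfrak a\oplus\mathfrak m_{\varepsilon}\oplus\mathfrak m_{\varepsilon/2}\oplus\mathfrak k_{\varepsilon}\oplus\mathfrak k_{\varepsilon/2}$ of Proposition~\ref{metricB}. By Proposition~\ref{phomo} the ambient manifold is $T(G/K)\setminus\{\text{zero section}\}\cong G/H\times\mathbb R^{+}$, and I identify its tangent space at a point of the slice $\{r\}$ with $\overline{\mathfrak m}\oplus\mathbb R\partial_{r}$, where $\mathbb R\partial_{r}$ is the radial (normal) direction and $\mathfrak a=\mathbb R\{X\}$ carries the standard field $\xi$. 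First I would specify the $G$-invariant almost complex structure $J$ by its value at $o_H$: it interchanges $\mathfrak m_{\lambda}$ with $\mathfrak k_{\lambda}$ for $\lambda\in\{\varepsilon,\varepsilon/2\}$ and interchanges $\mathfrak a$ with $\mathbb R\partial_{r}$, the latter weighted by $f$ so that $J\partial_{r}=f(r)^{-1}\xi$ along the slice. Since $\mathfrak m_{\lambda}\cong\mathfrak k_{\lambda}$ as $H$-modules, $\mathrm{Ad}(H)$-equivariance makes $J$ well defined and forces this block form, and $J^{2}=-\mathrm{id}$ is immediate. Pairing $J$ with the metric whose restriction to each slice is $\tilde{\mathbf g}^{\kappa}$ from~\eqref{fmain}, $\kappa=f(r)$, together with the compatible radial term, yields by construction a $G$-invariant almost Hermitian structure on $T(G/K)\setminus\{\text{zero section}\}$; the hypersurface recipe recalled in the Introduction ($\xi=-JN$, $\varphi=J$ on the contact distribution) then endows each $T_{r}(G/K)$ with the announced almost contact metric structure having characteristic field $f(r)^{-1}\xi$.

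Next I would establish the $K$-contact property slice by slice. The normalization $\tilde{\mathbf g}^{\kappa}(\kappa^{-1}\xi,\kappa^{-1}\xi)=1$ is exactly what the coefficient $\kappa^{2}$ on $\mathfrak a$ encodes. The contact condition $d\eta=2\Phi$ reduces, after the identification above and the formula $d\eta(Y,Z)=-\eta([Y,Z])$ for invariant forms, to a relation between $\mathrm{ad}_{X}\colon\mathfrak m_{\lambda}\to\mathfrak k_{\lambda}$ and the metric weights; since $\mathrm{ad}_{X}$ enters linearly, it forces the common weight on $\mathfrak m_{\lambda}\oplus\mathfrak k_{\lambda}$ to be proportional to the first power $\lambda(X)$, so the weights on the $\varepsilon$- and $\varepsilon/2$-summands stand in the ratio $\varepsilon(X):(\varepsilon/2)(X)=2:1$, matching $\tfrac{\kappa}{2}:\tfrac{\kappa}{4}$ in~\eqref{fmain}. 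That $\kappa^{-1}\xi$ is Killing for $\tilde{\mathbf g}^{\kappa}$ I would verify by the homogeneous criterion: the $\mathrm{Ad}(H)$-fixed generator $X\in\mathfrak a$ gives a Killing field precisely when $\mathrm{ad}_{X}$ is skew-symmetric on $(\overline{\mathfrak m},\tilde{\mathbf g}^{\kappa}_{o_H})$, and since $\mathrm{ad}_{X}$ is already skew for the background $\mathrm{Ad}$-invariant form and interchanges $\mathfrak m_{\lambda}\leftrightarrow\mathfrak k_{\lambda}$, skewness survives exactly because $\mathfrak m_{\lambda}$ and $\mathfrak k_{\lambda}$ carry equal weight in~\eqref{fmain}. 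Much of this bookkeeping is already packaged in Corollary~\ref{Tashiro}.

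The heart of the argument, and the step I expect to be the main obstacle, is passing from $K$-contact to Sasakian, an implication that is false in general. I would obtain it from the cone viewpoint: threading all slices into the single almost Hermitian manifold above is precisely what makes normality of the induced structure on every $T_{r}(G/K)$ equivalent to integrability of $J$ together with $d\Phi=0$, i.e. to the ambient structure being K\"ahler. Thus I would compute the Nijenhuis tensor of $J$ and $d\Phi$ at $o_H$ via the homogeneous Koszul formula and the restricted-root bracket relations among $\mathfrak a,\mathfrak m_{\lambda},\mathfrak k_{\lambda}$; the vanishing is where the coefficients of~\eqref{fmain} and the two-eigenvalue structure $\mathrm{ad}^{2}_{X}=\lambda^{2}(X)$ must conspire. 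The delicate case is the mixed bracket $[\mathfrak m_{\varepsilon/2},\mathfrak k_{\varepsilon/2}]$, which by the root addition $\varepsilon/2\pm\varepsilon/2$ can land in $\mathfrak a\oplus\mathfrak m_{\varepsilon}\oplus\mathfrak k_{\varepsilon}$ as well as in the isotropy $\mathfrak h$, so the cancellations there are the crux. Equivalently one may check the Sasakian identity $(\nabla_{Y}\varphi)Z=\tilde{\mathbf g}^{\kappa}(Y,Z)\,\kappa^{-1}\xi-\eta(Z)\,Y$ directly at $o_H$. I would streamline the $\kappa$-dependence by observing that $\tilde{\mathbf g}^{\kappa}$ is the $\mathcal D$-homothetic deformation, with parameter $a=\kappa$, of the reference $\kappa=1$ metric (the contact line scaling as $\kappa^{2}$ and the distribution as $\kappa$, with new characteristic field $\kappa^{-1}\xi$); since this deformation preserves the Sasakian class, it suffices to settle $\kappa=1$, and the single function $f$ simply assigns $a=f(r)$ to the slice of radius $r$.

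Finally, uniqueness follows by the same reduction. Any $G$-invariant $K$-contact structure on $T_{r}(G/K)$ with characteristic field $\kappa^{-1}\xi$ has, by Proposition~\ref{metricB}, a metric diagonal in the five summands of $\overline{\mathfrak m}$; the unit-length condition fixes the $\mathfrak a$-coefficient as $\kappa^{2}$, while the Killing and contact conditions, read through $\mathrm{ad}_{X}$ as above, force the weights on $\mathfrak m_{\lambda}\oplus\mathfrak k_{\lambda}$ to be equal and proportional to $\lambda(X)$, hence equal to $\tfrac{\kappa}{2}$ and $\tfrac{\kappa}{4}$. Therefore the structure must coincide with $(\kappa^{-1}\xi,\tilde{\mathbf g}^{\kappa})$, completing the proof of Theorem~\ref{main}.
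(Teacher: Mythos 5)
Your construction of the structure, the verification that it is $K$-contact, and the uniqueness argument all match the paper's route: the same almost complex structure $J^{1}$ (interchanging ${\mathfrak m}_{\lambda}$ with ${\mathfrak k}_{\lambda}$ and ${\mathfrak a}$ with the radial direction), the same compatible metric, the same use of $d\eta_{o_{H}}(u,v)=-\tfrac{1}{2}\langle[X,u],v\rangle$ for the contact condition, and the same Killing criterion forcing equal weights on ${\mathfrak m}_{\lambda}$ and ${\mathfrak k}_{\lambda}$. Your $\mathcal D$-homothetic observation is correct and would indeed reduce the problem to $\kappa=1$.

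The gap is in the Sasakian step, which is the heart of the theorem. First, the reduction you propose --- normality on every slice being equivalent to the ambient structure being K\"ahler --- is not available as stated: the cone characterization of Sasakian structures requires the ambient metric to be the metric cone $dt^{2}+t^{2}\tilde{\mathbf g}^{\kappa}$ over a fixed slice, whereas ${\mathbf g}^{f}$ has radial coefficient $f^{2}(t)$ and slice metrics that do not scale as $t^{2}$; the paper never claims, nor needs, that $(J^{1},{\mathbf g}^{f})$ is K\"ahler, and instead proves normality intrinsically on each slice from (\ref{NoH}). Second, and more seriously, you stop at ``the cancellations there are the crux'' without performing them, and you misplace where the difficulty lies. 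The case $u,v\in{\mathfrak m}_{\varepsilon/2}\oplus{\mathfrak k}_{\varepsilon/2}$, which you single out, is actually disposed of by the Jacobi identity together with $\varphi^{1}_{o_{H}}=\lambda_{\mathbb R}(X)^{-1}{\rm ad}_{X}$ on ${\mathfrak m}_{\lambda}\oplus{\mathfrak k}_{\lambda}$ and an inner-product computation killing the ${\mathfrak a}$-component of ${\mathbf N}_{o_{H}}(u,v)$. The genuinely hard case is the mixed one, $u\in{\mathfrak m}_{\varepsilon}\oplus{\mathfrak k}_{\varepsilon}$ and $v\in{\mathfrak m}_{\varepsilon/2}$, where ${\mathbf N}_{o_{H}}(u,v)=0$ is equivalent to $[\varphi^{1}_{o_{H}}u,\varphi^{1}_{o_{H}}v]=[u,v]$, i.e.\ to the identities (\ref{S2}) of Lemma \ref{pbrack}: $[\xi^{j}_{\varepsilon},\xi^{p}_{\varepsilon/2}]=[\zeta^{j}_{\varepsilon},\zeta^{p}_{\varepsilon/2}]$ and $[\zeta^{j}_{\varepsilon},\xi^{p}_{\varepsilon/2}]=-[\xi^{j}_{\varepsilon},\zeta^{p}_{\varepsilon/2}]$. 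These do not follow from the general bracket inclusions of Lemma \ref{lbrack1} or from ${\rm ad}^{2}_{X}=\lambda^{2}(X)$ alone; the paper establishes them by explicit root-vector computations for ${\mathbb C}{\mathbf P}^{n}$, ${\mathbb H}{\mathbf P}^{n}$ and ${\mathbb C}a{\mathbf P}^{2}$ in Section 3. Without identifying and proving this structural input, the normality claim --- precisely the point where ``$K$-contact implies Sasakian'' could fail in higher dimensions --- is not established.
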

We must point out that, from Proposition \ref{TJc}, the $G$-invariant almost Hermitian structure in Theorem \ref{main} cannot be differentially extended to all $T(G/K)$ and, from Proposition \ref{pstandard}, the standard contact structure $(\frac{1}{2}\eta,\frac{1}{4}\tilde{g}^{S})$ on $T_{1}{\mathbb S}^{n}$ or on $T_{1}{\mathbb R}{\mathbf P}^{n}$ is the Sasakian structure for $\kappa = \frac{1}{2}.$
\section{Preliminaries}
 \subsection{Almost contact metric structures}
We briefly recall some basic concepts of almost contact metric structures. For more information, see \cite{Bl}. An odd-dimensional smooth manifold $M$ is called {\em almost contact} if it admits a $(\varphi,\xi,\eta)$-structure, where $\varphi$ is a tensor field of type $(1,1),$ $\xi$ is a vector field and $\eta$ is a $1$-form, such that
\[
\varphi^{2} = -{\rm I} + \eta\otimes \xi,\quad \eta(\xi) = 1.
\]
Then $\varphi\xi = 0$ and $\eta\circ\varphi =0.$ If $M$ is equipped with a Riemannian metric $g$ such that
\[
g(\varphi X,\varphi Y) = g(X,Y) - \eta(X)\eta(Y),
\]
for all $X,Y\in {\mathfrak X}(M),$ where ${\mathfrak X}(M)$ is the Lie algebra of the vector fields on $M,$ $(M,\varphi,\xi,\eta,g)$ is said to be an {\em almost contact metric manifold} and $g$ is called a {\em compatible metric}. We say it is {\em contact metric} if $d\eta(X,Y) =g(X,\varphi Y).$ Then a contact metric structure is determined by the pair $(\xi,g)$ (or $(\eta,g))$ and $\xi$ is known as its {\em characteristic vector field}. If, in addition, $\xi$ is a Killing vector field, then the manifold is called {\em $K$-contact}. 

An almost contact structure $(\varphi,\xi,\eta)$ is said to be {\em normal} if the $(1,1)$-tensor field ${\mathbf N}(X,Y) = [\varphi,\varphi](X,Y) + 2{\rm d}\eta(X,Y)\xi$ vanishes for all $X,Y\in {\mathfrak X}(M),$ where $[\varphi,\varphi]$ is the {\em Nijenhuis torsion} of $\varphi,$ 
 \begin{equation}\label{Nijenhuis}
 [\varphi,\varphi] (X,Y) = \varphi^{2}[X,Y] + [\varphi X,\varphi Y] - \varphi[\varphi X,Y] - \varphi[X,\varphi Y].
 \end{equation}
 A contact metric structure $(\xi, g)$ which is normal is called a {\em Sasakian structure}. A useful characterization for Sasakian manifolds is the following: An almost contact metric manifold $(\varphi,\xi,\eta,g)$ is Sasakian if and only if  \begin{equation}\label{Sasakian}
 (\nabla_{X}\varphi)Y = g(X,Y)\xi - \eta(Y)X,\quad X,Y\in {\mathfrak X}(M),
 \end{equation}
 where $\nabla$ is the {\em Levi-Civita connection} of $(M,g).$ Any Sasakian manifold is always $K$-contact. The converse holds for the three-dimensional case, but it may not true in higher dimension.  
\subsection{Homogeneous manifolds}

A connected homogeneous manifold $M$ can be described as a quotient manifold $G/K,$ where $G$ is a Lie group, which is supposed to be connected, acting transitively and effectively on $M$ and $K$ is the isotropy subgroup of $G$ at some point $o\in M,$ the {\em origin} of $G/K.$ Denote by $\pi_{K}$ the projection $\pi_{K}\colon G\to G/K,$ $\pi_{K}(a) = aK,$ and by $\tau_{b},$ for each $b\in G,$ the translation $\tau_{b}\colon G/K\to G/K,$ $\tau_{b}(aK) = baK.$ If moreover $g$ is a $G$-invariant Riemannian metric on $M = G/K,$ then $(M,g)$ is said to be a {\em homogeneous Riemannian manifold}. 

When $G$ is compact, there exists a positive-definite ${\rm Ad}(G)$-invariant $2$-form $\langle\cdot,\cdot\rangle$ on the Lie algebra ${\mathfrak g}$ of $G$ and we have the reductive decomposition ${\mathfrak g} = {\mathfrak m}\oplus {\mathfrak k},$ ${\mathfrak m}$ being the $\langle\cdot,\cdot\rangle$-orthogonal complement of the Lie algebra ${\mathfrak k}$ of $K.$ Then the restriction of $\langle\cdot,\cdot\rangle$ to ${\mathfrak m}$ determines a $G$-invariant metric $g$ on $M.$ 

The differential map $(\pi_{K})_{*e}$ of $\pi_{K}$ at the identity element $e$ of $G$ gives a linear isomorphism of ${\mathfrak m}$ onto $T_{o}(G/K).$ Here and in what follows, $T_{o}(G/K)$ is identified with ${\mathfrak m}$ via $(\pi_{K})_{*e}.$ It is clear that there exists a sufficiently small neighborhood $O_{\mathfrak m}\subset {\mathfrak m}$ of zero in ${\mathfrak m}$ such that $\exp(O_{\mathfrak m})$ is a submanifold of $G$ and the mapping $\pi_{K\mid\exp(O_{\mathfrak m})}$ is a diffeomorphism onto a neighborhood ${\mathcal U}_{o}$ of $o.$ Then, for each $\xi\in {\mathfrak m},$ define $\xi^{\tau}$ as the vector field on ${\mathcal U}_{o}$ such that $\xi^{\tau}_{\exp x K} = (\tau_{\exp x})_{*o}\xi,$ for all $x\in O_{\mathfrak m}.$ Then $\xi^{\tau} = (\pi_{K})_{*}\xi^{\tt l},$ where $\xi^{\tt l}$ denotes the left $G$-invariant vector field on $G$ such that $\xi^{\tt l}_{e} = \xi.$ Under the identification $T_{o}(G/K)\cong {\mathfrak m},$ $\xi^{\tau}_{o}=\xi$ and $[\xi_{1}^{\tau},\xi^{\tau}_{2}]_{o} = [\xi_{1},\xi_{2}]_{\mathfrak m},$ where $[\cdot,\cdot]_{\mathfrak m}$ denotes the ${\mathfrak m}$-component of $[\cdot,\cdot]$ (see \cite{N}).

Let $\alpha\colon {\mathfrak m}\times {\mathfrak m}\to {\mathfrak m}$ be the ${\rm Ad}(K)$-invariant bilinear function that determines $\nabla,$ given by $\alpha(\xi_{1},\xi_{2}) = \nabla_{\xi_{1}}\xi_{2}^{\tau},$ for all $\xi_{1},\xi_{2}\in {\mathfrak m}.$ Then, using the Koszul formula, we have 
\begin{equation}\label{nabla}
\alpha(\xi_{1},\xi_{2}) = \frac{1}{2}[\xi_{1},\xi_{2}]_{\mathfrak m} + {\mathfrak U}(\xi_{1},\xi_{2}),
\end{equation} 
 where ${\mathfrak U}$ is the symmetric bilinear function on ${\mathfrak m}\times{\mathfrak m}$ such that
\begin{equation}\label{U}
2\langle{\mathfrak U}(\xi_{1},\xi_{2}),\xi_{3}\rangle = \langle[\xi_{3},\xi_{1}]_{\mathfrak m},\xi_{2}\rangle + \langle[\xi_{3},\xi_{2}]_{\mathfrak m},\xi_{1}\rangle.
\end{equation}
 When ${\mathfrak U} = 0,$ $(G/K,g)$ is said to be {\em naturally reductive}.  A $G$-invariant vector field $X$ on $G/K$ is Killing if and only if $\langle\alpha(\xi_{1},\xi),\xi_{2}\rangle + \langle\alpha({\xi_{2}},\xi),\xi_{1}\rangle = 0,$ for all $\xi_{1},\xi_{2}\in {\mathfrak m},$ where $\xi = X_{o}\in {\mathfrak m}.$ But, from (\ref{nabla}) and (\ref{U}), we get $\langle\alpha(\xi_{1},\xi),\xi_{2}\rangle + \langle\alpha({\xi_{2}},\xi),\xi_{1}\rangle = -2\langle{\mathfrak U}(\xi_{1},\xi_{2}),\xi\rangle.$ Hence, $X$ is a Killing vector field if and only if $\langle{\mathfrak U}(\cdot,\cdot),\xi\rangle = 0.$ In particular, on naturally reductive homogeneous manifolds any $G$-invariant vector field must be Killing.

\subsection{Tangent bundles}
Let $M = G/K$ be a homogeneous manifold. On the trivial vector bundle $G\times {\mathfrak m}$ consider two Lie group actions which commute on it: the left
$G$-action, $l_b \colon (a,x)\mapsto (ba,x)$ and the right
$K$-action $r_k \colon (a,x)\mapsto (ak,{\rm Ad}_{k^{-1}}x)$. 

Let
$\pi \colon G\times {\mathfrak m}\to G\times_K {\mathfrak m},$ $(a,x)\mapsto [(a,x)],$ be the natural projection for this right
$K$-action. Then $\pi$ is $G$-equivariant and, because the linear isotropy group $\{(\tau_{K})_{*o} \, : k\in K\}$ acting on $T_{o}(G/K)$ corresponds under $(\pi_{K})_{*e}$ with ${\rm Ad}(K)$ on ${\mathfrak m},$ the mapping $\phi$ given by
\begin{equation}\label{eq.phi}
\phi \colon G\times_K {\mathfrak m}\to T(G/K),
\quad
[(a,x)]\mapsto (\tau_{a})_{*o}x,
\end{equation}
is a $G$-equivariant diffeomorphism. 

Next, we determine the tangent space $T_{[(a,x)]}G\times_{K}{\mathfrak m}$ for all $(a,x)\in G\times {\mathfrak m}.$ For each $\xi\in {\mathfrak g},$ $\xi = \xi_{\mathfrak m} + \xi_{\mathfrak k}\in {\mathfrak m}\oplus{\mathfrak k},$ 
\[
\pi_{*(a,x)}((\xi_{\mathfrak k})^{\tt l}_{a},0) = \frac{d}{dt}_{\mid t = 0}\pi(a\exp t\xi_{\mathfrak k},x) = \frac{d}{dt}_{\mid t = 0}\pi(a,{\rm Ad}_{\exp t\xi_{\mathfrak k}}x) =\pi_{*(a,x)}(0,[\xi_{\mathfrak k},x]_{x}).
\]
Then, since $\pi_{*(a,x)}(\xi^{\tt l}_{g},u_{x})  =  \pi_{*(a,x)}(((\xi_{\mathfrak m})^{\tt l}_{a},u_{x}) +( (\xi_{\mathfrak k})^{\tt l}_{a},0)),$ it follows that 
\begin{equation}\label{kk}
\pi_{*(a,x)}(\xi^{\tt l}_{a},u_{x})  = \pi_{*(a,x)}((\xi_{\mathfrak m})^{\tt l}_{a},u_{x} + [\xi_{\mathfrak k},x]_{x})
\end{equation}
and we have
\begin{equation}\label{lGm}
 T_{[(a,x)]}G\times_{K}{\mathfrak m} = \{\pi_{*(a,x)}(\xi^{\tt l}_{a},u_{x})\colon (\xi,u)\in {\mathfrak m}\times {\mathfrak m}\},\quad (a,x)\in G\times{\mathfrak m}.
 \end{equation}
 
  \section{Compact rank-one symmetric spaces}
   
\subsection{Restricted roots of symmetric spaces of compact type}
We review a few facts about complex simple Lie algebras and symmetric spaces. See \cite[Ch. III and Ch. VII]{He} for more details. Suppose that ${\mathfrak g}$ is a compact simple Lie algebra and denote by ${\mathfrak g}^{\mathbb C}$ its complexification. If ${\mathfrak t}^{\mathbb C}$ is a Cartan subalgebra of ${\mathfrak g}^{\mathbb C},$ we have the root space decomposition
\[
{\mathfrak g}^{\mathbb C} = {\mathfrak t}^{\mathbb C} \oplus \sum_{\alpha\in \Delta}{\mathfrak g}_{\alpha},
\]
where $\Delta$ is the root system of ${\mathfrak g}^{\mathbb C}$ with respect to ${\mathfrak t}^{\mathbb C}$  and ${\mathfrak g}_{\alpha} = \{\xi\in {\mathfrak g}^{\mathbb C}\;:\;{\rm ad}_{t}\xi = \alpha(t)\xi,\;t\in {\mathfrak t}^{\mathbb C}\}.$ 

 Let $\Pi = \{\alpha_{1},\dots ,\alpha_{l}\}$ be a basis of $\Delta.$ Because the restriction of the Killing
form ${\mathbf B}$ of ${\mathfrak g}^{\mathbb C}$ to ${\mathfrak t}^{\mathbb
C} \times {\mathfrak t}^{\mathbb C}$ is nondegenerate, there
exists a unique element $t_{\alpha}\in {\mathfrak t}^{\mathbb C}$
such that ${\mathbf B}(t,t_{\alpha}) = \alpha(t),$ for all $t\in {\mathfrak t}^{\mathbb C}$ and ${\mathfrak t}^{\mathbb C}$ is expressed as ${\mathfrak t}^{\mathbb C} = \sum_{\alpha\in \Delta}{\mathbb
C}t_{\alpha}.$ Put $<\alpha
,\beta> ={\mathbf B}(t_{\alpha},t_{\beta}).$ 

Choose root vectors
$\{E_{\alpha}\}_{\alpha \in \Delta}$ such that $E_{\alpha}$ and $E_{\beta}$ are orthogonal under
${\mathbf B},$ if $\alpha + \beta \neq 0,$ and ${\mathbf B}(E_{\alpha},E_{-\alpha}) = 1$ and define the number $N_{\alpha,\beta}$ by $[E_{\alpha},E_{\beta}] = N_{\alpha,\beta}E_{\alpha + \beta},$ if $\alpha + \beta\in \Delta$ and $N_{\alpha,\beta} = 0$ if $\alpha + \beta \not\in
\Delta.$ Then
\begin{equation}\label{*}
N_{\alpha,\beta} = -N_{-\alpha,-\beta},\;\;\; N_{\alpha,\beta} =
-N_{\beta,\alpha}
\end{equation}
and, if $\alpha,\beta,\gamma\in \Delta$ and $\alpha + \beta +
\gamma = 0,$ 
\begin{equation}\label{**}
N_{\alpha,\beta} = N_{\beta,\gamma} = N_{\gamma,\alpha}.
\end{equation}
Moreover, given an $\alpha$-series $\beta + n\alpha$ $(p\leq n\leq
q)$ containing $\beta,$ 
\begin{equation}\label{***}
(N_{\alpha,\beta})^{2} = \frac{\textstyle q(1-p)}{\textstyle
2}<\alpha,\alpha>.
\end{equation}
Denote by $\Delta^{+}$ the set of positive roots of $\Delta$ with
respect to some lexicographic order in $\Pi.$ Then each $\alpha\in \Delta^{+}$ may be written as $\alpha = \sum_{k=1}^{l}n_{k}(\alpha)\alpha_{k},$ where $n_{k}(\alpha)\in {\mathbb Z},$ $n_{k}(\alpha)\geq 0,$ for all $k = 1,\dots, l,$ and ${\mathfrak g}$ as subspace of ${\mathfrak g}^{\mathbb
C}$ is given by
\[
{\mathfrak g} = {\mathfrak t} \oplus \displaystyle\sum_{\alpha \in
\Delta^{+}} ({\mathbb R}\; U^{0}_{\alpha} + {\mathbb R}\; U^{1}_{\alpha}),
\]
 where ${\mathfrak t} = \sum_{\alpha\in \Delta}{\mathbb R} {\rm i} t_{\alpha},$ $U^{0}_{\alpha} = E_{\alpha}-E_{-\alpha}$ and $U^{1}_{\alpha}
= {\mathrm i}(E_{\alpha} + E_{-\alpha}).$ Using \cite[Theorem 5.5, Ch. III]{He}, we obtain the following.
\begin{lemma}\label{bracket} For all $\alpha ,\beta \in
\Delta^{+}$ and $a = 0,1,$ the following equalities hold:
\begin{enumerate}
\item[{\rm (i)}]
$[U^{a}_{\alpha},i t_{\beta}] = (-1)^{a+1}<\alpha,\beta>
U^{a+1}_{\alpha};$
\item[{\rm (ii)}] $[U^{0}_{\alpha},U^{1}_{\alpha}] =
2i t_{\alpha};$
\item[{\rm (iii)}]
$[U^{a}_{\alpha},U^{b}_{\beta}] =
(-1)^{ab}N_{\alpha,\beta}U^{a+b}_{\alpha + \beta} +
(-1)^{a+b}N_{-\alpha,\beta}U^{a+b}_{\alpha -\beta},$ where $\alpha
\neq \beta$ and $a\leq b.$
\end{enumerate}
\end{lemma}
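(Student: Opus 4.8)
The plan is to prove all three identities by direct computation, expanding each bracket in terms of the root vectors $E_{\pm\alpha},E_{\pm\beta}$ and then reassembling the result into the $U^{a}_{\gamma}$ notation. The only structural facts I need are the defining eigenvalue relation ${\rm ad}_{t}E_{\alpha}=\alpha(t)E_{\alpha}$ for $t\in{\mathfrak t}^{\mathbb C}$ (so that $[it_{\beta},E_{\pm\alpha}]=\pm i<\alpha,\beta>E_{\pm\alpha}$, using $\alpha(t_{\beta})={\mathbf B}(t_{\beta},t_{\alpha})=<\alpha,\beta>$), the standard relation $[E_{\alpha},E_{-\alpha}]={\mathbf B}(E_{\alpha},E_{-\alpha})\,t_{\alpha}=t_{\alpha}$ coming from the chosen normalization, the bracket rule $[E_{\alpha},E_{\beta}]=N_{\alpha,\beta}E_{\alpha+\beta}$ together with the convention $N_{\alpha,\beta}=0$ (and $E_{\alpha+\beta}$ absent) when $\alpha+\beta\notin\Delta$, and finally the sign relations (\ref{*}). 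Before starting I extend the symbols $E_{\gamma}$, and hence $U^{a}_{\gamma}$, to all $\gamma\in\Delta$ (not merely $\Delta^{+}$), so that $U^{0}_{-\gamma}=-U^{0}_{\gamma}$ and $U^{1}_{-\gamma}=U^{1}_{\gamma}$, with $U^{a}_{\gamma}=0$ whenever $\gamma\notin\Delta$; this is what makes the right-hand sides of (i) and (iii) meaningful when an index such as $\alpha-\beta$ is negative or fails to be a root.

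For (i) I compute $[U^{a}_{\alpha},it_{\beta}]$ for $a=0$ and $a=1$ separately. Writing $E_{\alpha}-E_{-\alpha}=U^{0}_{\alpha}$ and $E_{\alpha}+E_{-\alpha}=-iU^{1}_{\alpha}$ (the inverse of the defining formulas), the eigenvalue relation gives $[U^{0}_{\alpha},it_{\beta}]=-i<\alpha,\beta>(E_{\alpha}+E_{-\alpha})=-<\alpha,\beta>U^{1}_{\alpha}$ and $[U^{1}_{\alpha},it_{\beta}]=<\alpha,\beta>U^{0}_{\alpha}$, which are precisely the two cases $a=0,1$ of $(-1)^{a+1}<\alpha,\beta>U^{a+1}_{\alpha}$ once the superscript is read modulo $2$. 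Identity (ii) is immediate: the two terms $[E_{\alpha},E_{\alpha}]$ and $[E_{-\alpha},E_{-\alpha}]$ vanish, leaving $[U^{0}_{\alpha},U^{1}_{\alpha}]=2i[E_{\alpha},E_{-\alpha}]=2it_{\alpha}$.

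Identity (iii) is the substantive one, and I would treat the three cases $(a,b)\in\{(0,0),(0,1),(1,1)\}$ permitted by $a\leq b$. In each case I expand $[U^{a}_{\alpha},U^{b}_{\beta}]$ into the four brackets $[E_{\pm\alpha},E_{\pm\beta}]=N_{\pm\alpha,\pm\beta}E_{\pm\alpha\pm\beta}$ and collect the terms indexed by $\alpha+\beta$ and by $\alpha-\beta$. The sign relations (\ref{*}) in the forms $N_{-\alpha,-\beta}=-N_{\alpha,\beta}$ and $N_{\alpha,-\beta}=-N_{-\alpha,\beta}$ (the latter obtained from the former by replacing $\beta$ with $-\beta$) are exactly what pairs $E_{\alpha+\beta}$ with $E_{-(\alpha+\beta)}$, and $E_{\alpha-\beta}$ with $E_{-(\alpha-\beta)}$, into $U^{a+b}_{\alpha+\beta}$ and $U^{a+b}_{\alpha-\beta}$ with the stated coefficients; the factors of $i$ carried by the $U^{1}$'s, together with the mod-$2$ collapse of the superscripts, produce the signs $(-1)^{ab}$ and $(-1)^{a+b}$. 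For instance, in the case $a=b=1$ the overall factor $i^{2}=-1$ turns the two groupings into $-N_{\alpha,\beta}U^{0}_{\alpha+\beta}+N_{-\alpha,\beta}U^{0}_{\alpha-\beta}$, matching $(-1)^{1\cdot1}N_{\alpha,\beta}U^{2}_{\alpha+\beta}+(-1)^{1+1}N_{-\alpha,\beta}U^{2}_{\alpha-\beta}$.

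I do not expect any conceptual difficulty; the whole lemma reduces to Theorem~5.5, Ch.~III of \cite{He} plus the relations (\ref{*}). The one thing that genuinely requires care is the combined bookkeeping of the three interlocking sign conventions — the mod-$2$ reading of the superscript, the parity behaviour $U^{0}_{-\gamma}=-U^{0}_{\gamma}$, $U^{1}_{-\gamma}=U^{1}_{\gamma}$ under negating a root, and the antisymmetries of $N_{\alpha,\beta}$ — since a single misplaced sign would spoil the match with $(-1)^{ab}$ or $(-1)^{a+b}$. A secondary point to keep in mind is the degenerate sub-cases in (iii): when $\alpha\pm\beta$ is not a root the corresponding $N$ vanishes and, by the extension convention above, the term $U^{a+b}_{\alpha\pm\beta}$ is zero as well, so the formula remains valid with no separate argument.
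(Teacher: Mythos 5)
Your proof is correct and is exactly the computation the paper leaves implicit: the paper offers no written proof beyond the citation of \cite[Theorem 5.5, Ch. III]{He}, and your expansion of the $U^{a}_{\gamma}$'s into the root vectors $E_{\pm\gamma}$, combined with the relations (\ref{*}) and the sign conventions $U^{0}_{-\gamma}=-U^{0}_{\gamma}$, $U^{1}_{-\gamma}=U^{1}_{\gamma}$, is the intended argument. All three case-by-case sign checks in (iii) come out as you state, so nothing is missing.
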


Let $\mu = \sum_{i=1}^{l}m_{i}\alpha_{i}$ be the {\em maximal
root} of $\Delta$ and consider $t_{j}\in {\mathfrak t}^{\mathbb
C},$ $j=1,\dots ,l,$ defined by $\alpha_{k}(t_{j}) = (1/m_{j})\delta_{jk},$ $ j,k = 1,\dots ,l.$ Each inner automorphism of order $2$ on ${\mathfrak g}^{\mathbb
C}$ is conjugate in the inner automorphism group of ${\mathfrak
g}^{\mathbb C}$ to some $\sigma = Ad_{\exp 2\pi {\rm i} t},$ where
$t = (1/2)t_{k}$ with $m_{k}=1$ (Hermitian symmetric space) or $t = t_{k}$ with $m_{k} = 2$ (non-Hermitian symmetric space). Denote by $\Delta^{+}(t)$ the positive root system
generated by $\Pi(t),$ where $\Pi(t) = \{\alpha_{j}\in \Pi\colon j\neq k\},$ if $m_{k} = 1,$ and $\Pi(t) = \{\alpha_{j}\in \Pi\colon j\neq k\}\cup\{-\mu\},$ if $m_{k} = 2.$ Then, ${\mathfrak t}\subset
{\mathfrak k},$ or equivalently ${\rm rank}\;{\mathfrak g} = {\rm rank}\;{\mathfrak k},$ and 
\begin{equation}\label{kk1}
{\mathfrak k} = {\mathfrak t} \oplus \displaystyle\sum_{\alpha \in
\Delta^{+}(t)}({\mathbb R}\; U^{0} _{\alpha}+ {\mathbb R}\; U^{1}_{\alpha}).
\end{equation}
Because ${\mathbf B}(U^{a}_{\alpha},U^{b}_{\beta}) =
-2\delta_{\alpha\beta}\delta_{ab},$ it follows that the set
$\{U^{a}_{\alpha}\colon a=0,1;\, \alpha\in
\Delta^{+}\setminus\Delta^{+}(t)\}$ is an
orthonormal basis for $({\mathfrak m}, -\frac{1}{2}{\mathbf B}_{\mid
{\mathfrak m}}).$

If $G/K$ is a symmetric spaces of compact type, then $G$ is a compact semisimple Lie group and there exists an involutive automorphism $\sigma\colon {\mathfrak g}\to {\mathfrak g}$ such that the corresponding $\pm 1$-eigenspaces ${\mathfrak k} =\{\xi\in {\mathfrak g}\colon \sigma(\xi) = \xi\}$ and ${\mathfrak m} = \{\xi\in {\mathfrak g}\colon \sigma(\xi) = -\xi\}$ determine a reductive decomposition ${\mathfrak g} = {\mathfrak m}\oplus {\mathfrak k}$ with $[{\mathfrak m},{\mathfrak m}]\subset {\mathfrak k}.$

 Suppose that $G/K$ has rank ${\mathbf r}.$ Then, fixed a ${\mathbf r}$-dimensional Cartan subspace ${\mathfrak a}$ of ${\mathfrak m},$ there exists a $\sigma$-invariant Cartan subalgebra ${\mathfrak t}_{\mathfrak a}$ of ${\mathfrak g}$ containing ${\mathfrak a}.$ This implies that  ${\mathfrak t}_{\mathfrak a} = {\mathfrak a}\oplus {\mathfrak t}_{0},$ where ${\mathfrak t}_{0} = {\mathfrak t}_{\mathfrak a}\cap {\mathfrak k},$ and the complexification ${\mathfrak t}_{\mathfrak a}^{\mathbb C}$ is a Cartan subalgebra of ${\mathfrak g}^{\mathbb C}.$ 

To the set $\Sigma = \{\lambda\in ({\mathfrak a}^{\mathbb C})^{*}\colon\lambda = \alpha_{\mid {\mathfrak a}^{\mathbb C}},\,\alpha\in \Delta\setminus \Delta_{0}\}$ of non-vanishing restrictions of roots to ${\mathfrak a}^{\mathbb C},$ where $\Delta_{0} = \{\alpha\in \Delta\colon\alpha_{\mid {\mathfrak a}^{\mathbb C}} = 0\},$ is known as the set of {\em restricted roots} of $({\mathfrak g},{\mathfrak k},{\mathfrak a}).$ Denote by $\Sigma^{+}$ the subset of positive restricted roots in $\Sigma$ determined $\Delta^{+}$ and by $m_{\lambda}$ the multiplicity of $\lambda\in \Sigma^{+},$ that is, $m_{\lambda} = {\rm card}\{\alpha\in \Delta\colon \alpha_{\mid {\mathfrak a}^{\mathbb C}} = \lambda\}.$ For each linear form $\lambda$ on ${\mathfrak a}^{\mathbb C}$ put
\[
{\mathfrak m}_\lambda =
\big\{\eta\in{\mathfrak m}:\operatorname{ad}^2_w(\eta)
=\lambda^2(w)\eta,\ \forall w\in{\mathfrak a}\big\}, \quad
{\mathfrak k}_\lambda =
\big\{\zeta\in{\mathfrak k}: \operatorname{ad}^2_w(\zeta)=\lambda^2(w)\zeta,\
\forall w\in{\mathfrak a}\big\}.
\]
Then ${\mathfrak m}_{\lambda}={\mathfrak m}_{-\lambda}$,
${\mathfrak k}_{\lambda}={\mathfrak k}_{-\lambda}$,
${\mathfrak m}_0={\mathfrak a}$ and ${\mathfrak k}_0$ equals to the centralizer ${\mathfrak h}$ of ${\mathfrak a}$ in
${\mathfrak k},$ that is,
\[
{\mathfrak h} = \{u\in {\mathfrak k}\;\colon [u,{\mathfrak a}] = 0\}.
\]
Clearly ${\mathfrak t}_{0}$ is a maximal abelian subalgebra of ${\mathfrak h}$ and ${\rm rank}\; {\mathfrak h} = {\rm rank}\;{\mathfrak g} - {\mathbf r}.$ Put ${\mathfrak h} = {\mathfrak h}_{0}\oplus {\mathfrak h}_{1},$ where ${\mathfrak h}_{0} = {\mathfrak t}_{0}$ and ${\mathfrak h}_{1}$ is the orthogonal complement of ${\mathfrak h}_{0}$ in ${\mathfrak h}.$ By~\cite[Ch.\ VII, Lemma 11.3]{He}, the following
decompositions are direct and orthogonal:
\begin{equation}\label{eq.ms4.2}
{\mathfrak m}={\mathfrak a}\oplus\sum_{\lambda\in\Sigma^+}{\mathfrak m}_\lambda,
\qquad
{\mathfrak k}={\mathfrak h}\oplus
\sum_{\lambda\in\Sigma^+}{\mathfrak k}_\lambda.
\end{equation}

Define the linear function $\lambda_{\mathbb R} \colon {\mathfrak a}\to{\mathbb R}$,
$\lambda\in\Sigma^+$, by the relation ${\rm i}\lambda_{\mathbb R}=\lambda$. Note that, since the Lie algebra ${\mathfrak g}$ is compact, $\lambda({\mathfrak a})\subset {\rm i} {\mathbb R}$. From \cite[Lemma 4.2]{GGM1} we have that for any vector
$\xi_\lambda\in{\mathfrak m}_\lambda$,
$\lambda\in\Sigma^+$, there exists a unique vector
$\zeta_\lambda\in{\mathfrak k}_\lambda$ such that 
\begin{equation}\label{eq.ms4.3}
[u,\xi_\lambda]=  -\lambda_{\mathbb R}(u)\zeta_\lambda,
\quad [u,\zeta_\lambda]= \lambda_{\mathbb R}(u)\xi_\lambda,
\qquad\text{for all}\ u\in{\mathfrak a}.
\end{equation}

 \subsection{Restricted roots of compact rank-one symmetric spaces}\label{subrestricted}  Compact rank-one Riemannian symmetric spaces can be characterized as those symmetric spaces with strictly positive curvature. They are Euclidean spheres, real, complex and quaternionic projective spaces and the Cayley plane \cite[Ch. 3]{Be1}. Their quotient expressions $G/K$ as symmetric spaces appear in Table I.
   
 \bigskip
\begin{tabular}{llllll}
\multicolumn{6}{c}{Table I. Compact rank-one symmetric spaces}\\
\hline\noalign{\smallskip}
& $G/K$
& {$\dim$}
& $m_\varepsilon$
& $m_{\varepsilon/{\scriptscriptstyle 2}}$ & ${\mathfrak h}$ \\
\noalign{\smallskip}\hline\noalign{\smallskip}
$\begin{matrix}
\mathbb{S}^n,(n\geq 2) \\ \noalign{\smallskip}
 {\mathbb R}{\mathbf P}^n, (n\geq 2)
\end{matrix}$ &
$\begin{matrix}
\hspace{-1.5cm}\mathrm{SO}(n{+}1)/\mathrm{SO}(n) \\ \noalign{\smallskip}
 \mathrm{SO}(n{+}1)/{\rm S}({\rm O}(1)\times {\rm O}(n))\end{matrix}$ \smallskip&
$n$ & $n{-}1 $ & $0$ & $\mathfrak{so}(n{-}1)$ \\ \smallskip
${\mathbb C}{\mathbf P}^n$,\,{$(n \geq 2)$}
&{$\mathrm{SU}(n{+}1)/\mathrm{S}(\mathrm{U}(1){\times}
\mathrm{U}(n))$}& $2n$
& $1$ & $2n{-}2$ & ${\mathbb R}\oplus \mathfrak{su}(n{-}1)$ \\ \smallskip
${\mathbb H}{\mathbf P}^n$,\,{$(n\geq 1)$}
& {$\mathrm{Sp}(n{+}1)/\mathrm{Sp}(1){\times} \mathrm{Sp}(n)$} & $4n$
& $3$ & $4n{-}4$ & $\mathfrak{sp}(1)\oplus \mathfrak{sp}(n{-}1)$ \\
\smallskip
${\mathbb C\mathrm a}{\mathbf P}^2$
& {$\mathrm{F_4}/\mathrm{Spin(9)}$} & $16$ & $7$
& 8 & $\mathfrak{so}(7)$ \\
\hline
\end{tabular}

\vspace{0.5cm}

Except for odd-dimensional spheres ${\mathbb S}^{2n-1} = G/K = \SO(2n)/\SO(2n-1)$ $(n\geq 2),$ where ${\rm rank}\,{\mathfrak g} = n>{\rm rank}\,{\mathfrak k} = n-1,$ the rest of all compact rank-one symmetric spaces admit inner automorphisms. Note that $\dim{\mathfrak a} = 1$ and, from Lemma \ref{bracket} and (\ref{kk1}), a compact symmetric space $G/K$ has rank one if and only if $\alpha + \beta \neq 0$ or $\alpha -\beta\neq 0,$ for all $\alpha,\beta\in \Delta^{+}\setminus \Delta^{+}(t),$ $\alpha\neq \beta.$ 

Each compact rank-one symmetric is described in detail below. As a consequence, the following lemmas, Lemma \ref{restricted} and Lemma \ref{pbrack}, are proven.
  \begin{lemma}\label{restricted} The restricted root system $\Sigma$ of $({\mathfrak g},{\mathfrak k},{\mathfrak a})$  is either $\Sigma = \{\pm\varepsilon\}$ or $\Sigma = \{\pm\varepsilon,\pm\frac{1}{2}\varepsilon\},$ where $\varepsilon\in ({\mathfrak a}^{\mathbb C})^{*}.$ The corresponding multiplicities $m_\varepsilon$, $m_{\varepsilon/{\scriptscriptstyle 2}}$ of these roots and the Lie algebra ${\mathfrak h}$ are listed in {\rm Table I}.
 \end{lemma}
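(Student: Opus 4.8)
The plan is to separate the statement into two pieces: the \emph{shape} of $\Sigma$ (the dichotomy $\{\pm\varepsilon\}$ versus $\{\pm\varepsilon,\pm\tfrac12\varepsilon\}$) and the \emph{numerical data} (the multiplicities $m_\varepsilon,m_{\varepsilon/2}$ together with $\mathfrak h$). The first piece is essentially formal, the second a case-by-case computation over the five families of Table I. For the dichotomy, note that rank one means $\dim\mathfrak a=1$; fixing $0\neq X\in\mathfrak a$ and a generator $\varepsilon$ of the line $(\mathfrak a^{\mathbb C})^{*}$, every restricted root is a nonzero real multiple of $\varepsilon$, so $\Sigma$ is a finite symmetric subset of $\mathbb R\varepsilon$. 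Since $\Sigma$ is a (possibly non-reduced) root system on $\mathfrak a^{*}$ (see \cite[Ch. VII]{He}), the integrality constraints pin it down: two proportional roots can only differ by a factor $\pm1,\pm2,\pm\tfrac12$, and a factor $2$ together with a factor $\tfrac12$ cannot occur relative to the same root, as they would produce two roots in ratio $4$. Hence, after rescaling $\varepsilon$ so that it is the longer root, $\Sigma$ is either $\{\pm\varepsilon\}$ or $\{\pm\varepsilon,\pm\tfrac12\varepsilon\}$, which is the asserted dichotomy.

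It then remains to read off $m_\lambda=\dim\mathfrak m_\lambda$ and $\mathfrak h=\mathfrak k_0$ for each space. By (\ref{eq.ms4.2}) one has $1+m_\varepsilon+m_{\varepsilon/2}=\dim\mathfrak m=\dim(G/K)$, so it suffices to identify the $\operatorname{ad}^{2}_{X}$-eigenspace $\mathfrak m_\varepsilon$ of larger eigenvalue and the centralizer $\mathfrak h$ of $\mathfrak a$ in $\mathfrak k$. I would carry this out using the explicit root vectors of Lemma \ref{bracket} and the description (\ref{kk1}) of $\mathfrak k$, entry by entry of Table I: for the spheres $\mathrm{SO}(n+1)/\mathrm{SO}(n)$ (and $\mathbb R\mathbf P^{n}$) constant curvature forces $\operatorname{ad}^{2}_{X}$ to have a single nonzero eigenvalue, giving $\Sigma=\{\pm\varepsilon\}$, $m_\varepsilon=n-1$ and $\mathfrak h=\mathfrak{so}(n-1)$; for $\mathbb K\mathbf P^{n}$ with $\mathbb K=\mathbb C,\mathbb H$ and for the Cayley plane $\mathbb K=\mathbb O$ ($n=2$), the $\mathbb K$-line through $X$ splits off $\mathfrak m_\varepsilon$ of dimension $\dim_{\mathbb R}\mathbb K-1$ and leaves $\mathfrak m_{\varepsilon/2}$ of dimension $(n-1)\dim_{\mathbb R}\mathbb K$, while $\mathfrak h$ is the stabilizer of that line, reproducing the remaining rows of the table.

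The hard part is the numerical identification for the Cayley plane $\mathrm{F}_{4}/\mathrm{Spin}(9)$, where no matrix model is available: one must work directly with the root system of $\mathrm{F}_{4}$, the choice of $t$ in (\ref{kk1}), and the action of $\operatorname{ad}_{X}$ through Lemma \ref{bracket}, in order to verify $m_\varepsilon=7$, $m_{\varepsilon/2}=8$ and $\mathfrak h=\mathfrak{so}(7)$. This computation is precisely what the detailed description of each space preceding the lemma supplies, and the table entries, hence the lemma, follow from it.
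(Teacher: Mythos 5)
Your proposal is correct, and it splits into a part that genuinely differs from the paper and a part that coincides with it. For the dichotomy $\Sigma=\{\pm\varepsilon\}$ or $\Sigma=\{\pm\varepsilon,\pm\tfrac12\varepsilon\}$ you give a short structural argument: $\dim\mathfrak a=1$ forces all restricted roots to be proportional, and the integrality axiom for the (possibly non-reduced) restricted root system \cite[Ch.~VII]{He} restricts the ratio of two proportional roots to $\{\pm\tfrac12,\pm1,\pm2\}$, with ratios $2$ and $\tfrac12$ mutually exclusive relative to the same root; normalizing $\varepsilon$ to be the longer root settles the shape of $\Sigma$ a priori. The paper never argues this way --- it obtains the dichotomy only a posteriori, by computing the eigenvalues of $\operatorname{ad}^2_X$ (which come out as $-1$ and $-\tfrac14$) separately in each of the four cases via Lemma \ref{bracket} and the relations (\ref{*})--(\ref{***}). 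Your route is cleaner and explains why no other shape can occur, at the cost of invoking the general theory of restricted root systems. For the multiplicities and for $\mathfrak h=\mathfrak k_0$, by contrast, your argument is the same case-by-case computation the paper carries out in Section \ref{subrestricted}: the identity $1+m_\varepsilon+m_{\varepsilon/2}=\dim(G/K)$ from (\ref{eq.ms4.2}) is a useful consistency check but does not by itself determine the splitting, and your appeal to ``the detailed description of each space preceding the lemma'' for $\mathrm F_4/\mathrm{Spin}(9)$ is in effect an appeal to the paper's own proof (the explicit $\operatorname{ad}^2_X$ computations on the root vectors $U^a_\alpha$ and the identification of $\mathfrak h=\mathfrak h_0\oplus\mathfrak h_1$). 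Your heuristic $m_\varepsilon=\dim_{\mathbb R}\mathbb K-1$, $m_{\varepsilon/2}=(n-1)\dim_{\mathbb R}\mathbb K$ does match every row of Table I, but it is a mnemonic rather than a proof; the lemma is only established once those case computations are actually performed, exactly as the paper does.
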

   Denote by $X$ the unique (basis) vector $X\in {\mathfrak a}$ such that $\varepsilon_{\mathbb R}(X) = 1,$ where $\varepsilon = {\rm i}\varepsilon_{\mathbb R}.$ Multiplying the inner product $\langle\cdot,\cdot\rangle$ by a positive constant, we can assume that $\langle X,X\rangle = 1.$ 
  
  In order to simplify notations we put ${\mathfrak m}^{+} = {\mathfrak m}_{\varepsilon}\oplus {\mathfrak m}_{\varepsilon/2}$ and ${\mathfrak k}^{+} = {\mathfrak k}_{\varepsilon}\oplus {\mathfrak k}_{\varepsilon/2},$ where ${\mathfrak m}_{\varepsilon/2}= 0$ and ${\mathfrak k}_{\varepsilon/2} =0$ if $\varepsilon/2$ is not in $\Sigma.$ Then fixing in ${\mathfrak m}_{\varepsilon}$ and ${\mathfrak m}_{\varepsilon/2}$ some $\langle\cdot,\cdot\rangle$-orthonormal basis $\{\xi^{j}_{\varepsilon}\}$ and $\{\xi^{p}_{\varepsilon/2}\},$ $j = 1,\dots, m_{\varepsilon},$ $p = 1,\dots, m_{\varepsilon/2},$ we take the unique bases $\{\zeta^{j}_{\varepsilon}\}$ and $\{\zeta^{p}_{\varepsilon/2}\}$ of ${\mathfrak k}_{\varepsilon}$ and ${\mathfrak k}_{\varepsilon/2}$ satisfying (\ref{eq.ms4.3}) for $\lambda = \varepsilon$ and $\lambda = \varepsilon/2.$ Clearly these last two basis are also $\langle\cdot,\cdot\rangle$-orthonormal.  
 \begin{lemma}\label{pbrack} We have
\begin{equation}\label{S2}
[\xi^{j}_{\varepsilon},\xi^{p}_{\varepsilon/2}] = [\zeta^{j}_{\varepsilon},\zeta^{p}_{\varepsilon/2}],\quad [\zeta^{j},\xi^{p}_{\varepsilon/2}] = -[\xi^{j}_{\varepsilon},\zeta^{p}_{\varepsilon/2}],
\end{equation}
for all $j = 1,\dots, m_{\varepsilon}$ and $p = 1,\dots, m_{\varepsilon/2}.$
\end{lemma}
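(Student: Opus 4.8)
The plan is to reduce both identities to statements about the single operator $D:=\operatorname{ad}_X$ and then to exploit that, in the rank-one situation, $\operatorname{ad}_X^2$ can only have the eigenvalues $0$, $\varepsilon^2(X)=-1$ and $(\varepsilon/2)^2(X)=-\tfrac14$. First I would record, straight from (\ref{eq.ms4.3}) with $\varepsilon_{\mathbb R}(X)=1$, how $D$ acts on the chosen orthonormal bases:
\[
D\xi^j_\varepsilon=-\zeta^j_\varepsilon,\quad D\zeta^j_\varepsilon=\xi^j_\varepsilon,\quad D\xi^p_{\varepsilon/2}=-\tfrac12\zeta^p_{\varepsilon/2},\quad D\zeta^p_{\varepsilon/2}=\tfrac12\xi^p_{\varepsilon/2}.
\]

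Fixing indices $j,p$, I abbreviate the four relevant brackets by $A=[\xi^j_\varepsilon,\xi^p_{\varepsilon/2}]$, $B=[\zeta^j_\varepsilon,\zeta^p_{\varepsilon/2}]$, $C=[\zeta^j_\varepsilon,\xi^p_{\varepsilon/2}]$ and $E=[\xi^j_\varepsilon,\zeta^p_{\varepsilon/2}]$. The Cartan relations $[{\mathfrak m},{\mathfrak m}]\subset{\mathfrak k}$, $[{\mathfrak k},{\mathfrak k}]\subset{\mathfrak k}$ and $[{\mathfrak k},{\mathfrak m}]\subset{\mathfrak m}$ give at once $A,B\in{\mathfrak k}$ and $C,E\in{\mathfrak m}$. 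Applying the Jacobi identity with $X$ together with the four displayed relations yields the first-order system $DA=-C-\tfrac12E$, $DB=E+\tfrac12C$, $DC=A-\tfrac12B$ and $DE=\tfrac12A-B$.

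Iterating once, a short computation gives $D^2(A-B)=-\tfrac94(A-B)$ and $D^2(C+E)=-\tfrac94(C+E)$. Since $A-B\in{\mathfrak k}$ and $C+E\in{\mathfrak m}$, each of these is an eigenvector of $\operatorname{ad}_X^2$ for the eigenvalue $-\tfrac94$. But $-\tfrac94=(3\varepsilon/2)^2(X)$, and $3\varepsilon/2\notin\Sigma$ by Lemma \ref{restricted}; as $\operatorname{ad}_X^2$ has no eigenvalue outside $\{0,-1,-\tfrac14\}$ on ${\mathfrak g}$, both eigenvectors must vanish. Hence $A=B$ and $C=-E$, which are exactly the two identities in (\ref{S2}).

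The single load-bearing step is this last one: recognizing that the ``antidiagonal'' combinations $A-B$ and $C+E$ are forced into a $D^2$-eigenspace whose eigenvalue $-\tfrac94$ simply does not occur. This is where the rank-one hypothesis enters decisively through Lemma \ref{restricted}, for in higher rank a restricted root such as $3\varepsilon/2$ could be present and the identities would break down. Everything else is a mechanical use of the Jacobi identity and (\ref{eq.ms4.3}).
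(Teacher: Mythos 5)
Your argument is correct, and it is genuinely different from the paper's. The paper does not give a uniform proof of Lemma \ref{pbrack}: it verifies (\ref{S2}) case by case in Section \ref{subrestricted}, choosing explicit root vectors $U^{a}_{\alpha}$ for each of ${\mathbb C}{\mathbf P}^{n}$, ${\mathbb H}{\mathbf P}^{n}$ and ${\mathbb C}a{\mathbf P}^{2}$ (the case $m_{\varepsilon/2}=0$ being vacuous) and computing all brackets explicitly, as in (\ref{brackTCP}); the lemma is then read off ``by direct calculation.'' You instead close the system under $D=\operatorname{ad}_{X}$: the Jacobi identity together with (\ref{eq.ms4.3}) gives $DA=-C-\tfrac12E$, $DB=E+\tfrac12C$, $DC=A-\tfrac12B$, $DE=\tfrac12A-B$, whence $D(A-B)=-\tfrac32(C+E)$ and $D(C+E)=\tfrac32(A-B)$, so $A-B$ and $C+E$ lie in the $(-\tfrac94)$-eigenspace of $\operatorname{ad}^{2}_{X}$; by the decompositions (\ref{eq.ms4.2}) and Lemma \ref{restricted} that operator is diagonalizable on ${\mathfrak g}$ with spectrum contained in $\{0,-1,-\tfrac14\}$, so both combinations vanish, which is exactly (\ref{S2}). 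I checked the linear algebra and the eigenvalue bookkeeping; the argument is sound. What your route buys is uniformity and brevity: no choice of root vectors, no four-fold case split, and a transparent reason why the identities hold, namely that $3\varepsilon/2\notin\Sigma$ (equivalently, $[{\mathfrak m}_{\varepsilon},{\mathfrak m}_{\varepsilon/2}]$ has no component along a would-be restricted root $3\varepsilon/2$). What it costs is that it leans on Lemma \ref{restricted}, which the paper itself only establishes through the same case-by-case inspection, and it does not produce the explicit structure constants (such as (\ref{brackTCP})) that the paper's longer computation also delivers and reuses later in the text.
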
  
  
Applying \cite[Ch. VII, Lemma 11.4, p. 335]{He}), it is immediate the following.
\begin{lemma}\label{lbrack1} We have:
\begin{equation}\label{brack1}
\begin{array}{l}
[{\mathfrak h},{\mathfrak m}_{\lambda}]\subset {\mathfrak m}_{\lambda},\quad [{\mathfrak h},{\mathfrak k}_{\lambda}]\subset{\mathfrak k}_{\lambda},\quad [{\mathfrak a},{\mathfrak m}_{\lambda}]\subset{\mathfrak k}_{\lambda},\quad [{\mathfrak a},{\mathfrak k}_{\lambda}]\subset {\mathfrak m}_{\lambda},\;\; \lambda = \varepsilon,\varepsilon/2,\\[0.4pc]
[{\mathfrak m}_{\varepsilon},{\mathfrak m}_{\varepsilon}]\subset {\mathfrak h},\quad [{\mathfrak m}_{\varepsilon},{\mathfrak m}_{\varepsilon/2}]\subset {\mathfrak k}_{\varepsilon/2},\quad [{\mathfrak m}_{\varepsilon},{\mathfrak k}_{\varepsilon}]\subset {\mathfrak a},\quad [{\mathfrak m}_{\varepsilon},{\mathfrak k}_{\varepsilon/2}]\subset {\mathfrak m}_{\varepsilon/2},\\[0.4pc]
[{\mathfrak m}_{\varepsilon/2},{\mathfrak m}_{\varepsilon/2}]\subset {\mathfrak h}\oplus {\mathfrak k}_{\varepsilon},\quad [{\mathfrak m}_{\varepsilon/2},{\mathfrak k}_{\varepsilon}]\subset{\mathfrak m}_{\varepsilon/2},\quad [{\mathfrak m}_{\varepsilon/2},{\mathfrak k}_{\varepsilon/2}]\subset {\mathfrak a}\oplus {\mathfrak m}_{\varepsilon},\\[0.4pc]
[{\mathfrak k}_{\varepsilon},{\mathfrak k}_{\varepsilon}]\subset {\mathfrak h},\quad [{\mathfrak k}_{\varepsilon},{\mathfrak k}_{\varepsilon/2}]\subset {\mathfrak k}_{\varepsilon/2},\quad [{\mathfrak k}_{\varepsilon/2},{\mathfrak k}_{\varepsilon/2}]\subset {\mathfrak h}\oplus {\mathfrak k}_{\varepsilon}.
\end{array}
\end{equation}
\end{lemma}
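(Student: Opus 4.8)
The plan is to derive all thirteen inclusions of \eqref{brack1} from exactly two gradings carried by $\mathfrak{g}$: the $\mathbb{Z}_2$-grading coming from the symmetric-space involution $\sigma$, and the restricted-root grading on the complexification $\mathfrak{g}^{\mathbb C}$. Neither grading alone suffices, but together they pin down the target of each bracket. The honest content is precisely the bookkeeping that makes the cited \cite[Ch.~VII, Lemma 11.4]{He} explicit in the rank-one case.

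First I would record the \emph{parity rule}. Since $\mathfrak{k}$ and $\mathfrak{m}$ are the $\pm1$-eigenspaces of the automorphism $\sigma$, one has $[\mathfrak{k},\mathfrak{k}]\subset\mathfrak{k}$, $[\mathfrak{k},\mathfrak{m}]\subset\mathfrak{m}$ and $[\mathfrak{m},\mathfrak{m}]\subset\mathfrak{k}$. Because $\mathfrak{a}\subset\mathfrak{m}$, $\mathfrak{h}\subset\mathfrak{k}$, and each $\mathfrak{m}_\lambda\subset\mathfrak{m}$, $\mathfrak{k}_\lambda\subset\mathfrak{k}$ by definition, this already decides, for every pair appearing in \eqref{brack1}, whether the bracket lands in an $\mathfrak{m}$-type or a $\mathfrak{k}$-type subspace. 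Next I would record the \emph{weight rule}. Complexifying and using the restricted-root decomposition $\mathfrak{g}^{\mathbb C}=\mathfrak{g}_0^{\mathbb C}\oplus\sum_{\lambda\in\Sigma}\mathfrak{g}_\lambda^{\mathbb C}$ relative to $\mathfrak{a}^{\mathbb C}$, the operator $\operatorname{ad}_w$ acts on $\mathfrak{g}_\lambda^{\mathbb C}$ as the scalar $\lambda(w)$ for $w\in\mathfrak{a}$, so $[\mathfrak{g}_\lambda^{\mathbb C},\mathfrak{g}_\mu^{\mathbb C}]\subset\mathfrak{g}_{\lambda+\mu}^{\mathbb C}$, with the convention $\mathfrak{g}_\nu^{\mathbb C}=0$ whenever $\nu\notin\Sigma\cup\{0\}$. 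Since $\sigma$ restricts to $-\mathrm{id}$ on $\mathfrak{a}$, it reverses the sign of every restricted weight, whence $(\mathfrak{m}_\lambda\oplus\mathfrak{k}_\lambda)^{\mathbb C}=\mathfrak{g}_\lambda^{\mathbb C}\oplus\mathfrak{g}_{-\lambda}^{\mathbb C}$ for $\lambda\in\Sigma^+$ and $(\mathfrak{a}\oplus\mathfrak{h})^{\mathbb C}=\mathfrak{g}_0^{\mathbb C}$ (this is consistent with the $\operatorname{ad}_X^2$-eigenspace description of $\mathfrak{m}_\lambda,\mathfrak{k}_\lambda$, because in rank one the $\operatorname{ad}_X$-eigenvalues $0,\pm\tfrac{\mathrm i}{2},\pm\mathrm i$ are distinct and the $\lambda^2(X)$-eigenspace of $\operatorname{ad}_X^2$ is exactly $\mathfrak{g}_\lambda^{\mathbb C}\oplus\mathfrak{g}_{-\lambda}^{\mathbb C}$). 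Consequently a bracket of a weight-$\lambda$ space with a weight-$\mu$ space lands, over $\mathbb C$, in $\sum_{\pm}\mathfrak{g}_{\pm\lambda\pm\mu}^{\mathbb C}$, that is, in the real span of those $\mathfrak{m}_\nu\oplus\mathfrak{k}_\nu$ with $\nu$ among $\lambda+\mu$ and $\lambda-\mu$ (up to sign) that are admissible. Here the rank-one hypothesis enters decisively through Lemma \ref{restricted}: the only admissible weights are $0,\varepsilon/2,\varepsilon$, so $2\varepsilon\notin\Sigma$ and $3\varepsilon/2\notin\Sigma$, and every ``overflow'' term is automatically zero.

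Finally I would combine the two rules entry by entry. For example $[\mathfrak{m}_{\varepsilon/2},\mathfrak{m}_{\varepsilon/2}]$ is $\mathfrak{k}$-type by parity and of weight $\varepsilon/2+\varepsilon/2=\varepsilon$ or $\varepsilon/2-\varepsilon/2=0$ by the weight rule, hence lies in $\mathfrak{h}\oplus\mathfrak{k}_\varepsilon$; whereas $[\mathfrak{m}_\varepsilon,\mathfrak{k}_\varepsilon]$ is $\mathfrak{m}$-type and of weight $2\varepsilon$ (excluded) or $0$, hence lies in $\mathfrak{m}_0=\mathfrak{a}$. Running the remaining eleven pairs through the same two-step test reproduces \eqref{brack1} line by line. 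The only genuinely delicate point is the compatibility asserted in the second paragraph, namely that passing between the real $\operatorname{ad}_X^2$-eigenspace description of $\mathfrak{m}_\lambda,\mathfrak{k}_\lambda$ and the complex $\operatorname{ad}_w$-weight description respects the bracket so that no cross-terms survive; once $2\varepsilon$ and $3\varepsilon/2$ are excluded from $\Sigma$, every bracket collapses to exactly the stated inclusion, which is the rank-one specialization of \cite[Ch.~VII, Lemma 11.4]{He}.
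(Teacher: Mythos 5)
Your proof is correct and takes essentially the same route as the paper: the paper's entire proof is the citation of \cite[Ch.~VII, Lemma 11.4]{He}, whose content is exactly your parity-plus-weight argument, specialized to the rank-one restricted root system of Lemma \ref{restricted} so that the overflow weights $2\varepsilon$ and $3\varepsilon/2$ drop out. You have merely unpacked that cited lemma (correctly, including the identification $(\mathfrak{a}\oplus\mathfrak{h})^{\mathbb C}=\mathfrak{g}_0^{\mathbb C}$ and the eigenspace compatibility), so the two proofs coincide in substance.
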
\vspace{0.2cm}
\noindent {\bf Case 1: ${\mathbb S}^{n},$ ${\mathbb R}{\mathbf P}^{n}$} $(n\geq 2).$ Denote by $E_{jk}$ the $(n+1)\times (n+1)$-matrix with $1$ in the entry of the $j$th rwo and the $k$th column and $0$ elsewhere. Then the set of matrices $\{A_{jk} = E_{jk} - E_{kj}\colon 1\leq j<k\leq n+1\}$ is an orthonormal basis of the Lie algebra ${\mathfrak s}{\mathfrak o}(n+1)$ of $\SO(n+1)$ with respect to the invariant trace form $\langle A,B\rangle = -\frac{1}{2}{\rm trace AB},$ for all $A,B\in {\mathfrak s}{\mathfrak o}(n+1).$ Here, ${\mathfrak k} = {\mathbb R}\{A_{jk}\}_{2\leq j<k\leq n+1}$ is isomorphic to ${\mathfrak s}{\mathfrak o}(n)$ and ${\mathfrak m}= {\mathbb R}\{A_{1k}\}_{2\leq k\leq n+1}$ is its orthogonal complement in ${\mathfrak s}{\mathfrak o}(n+1).$ We take as Cartan subspace to ${\mathfrak a} = {\mathbb R}X,$ where $X= A_{12}.$ Then the centralizer ${\mathfrak h}$ of ${\mathfrak a}$ in ${\mathfrak k}$ is given by
\begin{equation}\label{hh}
{\mathfrak h} = {\mathbb R}\{A_{jk}\}_{3\leq j<k\leq n+1}.
\end{equation}
 Because ${\rm ad}^{2}_{X}A_{1k} = -A_{1k},$ for $k = 3,\dots ,n+1,$ it follows that ${\mathfrak m}_{\varepsilon} ={\mathbb R}\{A_{1k}\}_{k = 3,\dots, n+1}.$ This implies that $\Sigma^{+} = \{\varepsilon\},$  $m_{\varepsilon} = n-1$ and $m_{\varepsilon/2} = 0.$ 

\vspace{0.1cm}

\noindent{\bf Case 2: ${\mathbb C}{\mathbf P}^{n}$} $(n\geq 2).$ Consider ${\mathfrak g} = {\mathfrak s}{\mathfrak u}(n+1)$ as compact real form of the complex simple Lie algebra ${\mathfrak a}_{n} = {\mathfrak s}{\mathfrak l}(n+1,{\mathbb C}).$ Let $\Pi = \{\alpha_{1},\dots ,\alpha_{n}\}$ be a system of simple roots. Then the set $\Delta^{+}$ of positive roots is expressed as 
 \[
 \Delta^{+} = \{\alpha_{pq} = \alpha_{p}+\dots + \alpha_{q}\;\colon 1\leq p \leq q \leq n\}.
 \]
 We get $\langle\alpha_{1},\alpha_{1}\rangle = \dots = \langle\alpha_{n},\alpha_{n}\rangle = -\frac{1}{2}\langle\alpha_{k},\alpha_{k+1}\rangle,$ for all $k = 1,\dots, n-1,$ being zero the rest of the products of basic elements.
 
  The complex projective space ${\mathbb C}{\mathbf P}^{n}$ is obtained for the inner automorphism $\sigma = {\rm Ad}_{\exp 2\pi i t},$ where $t = \frac{1}{2}t_{1}$ (or, equivalently, $t = \frac{1}{2}t_{n}).$ Then $\Pi(t) = \{\alpha_{2},\dots ,\alpha_{n}\},$ $\Delta^{+}(t) = \{\alpha_{pq}\colon 2\leq p\leq q \leq n\}$ and $\Delta^{+}\setminus \Delta^{+}(t) = \{\alpha_{1q}\colon 1\leq q\leq n\}.$ We choose as Cartan subspace to ${\mathfrak a} = {\mathbb R} U^{0}_{\alpha_{1}}.$ 
 
 From Lemma \ref{bracket} it follows that $[U^{0}_{\alpha_{1}}, i t_{\alpha_{1}} + i t_{\alpha_{2}}] = 0$ and $[U^{0}_{\alpha_{1}},i t_{\alpha_{k}}] = 0,$ for all $k = 3,\dots ,n.$ Therefore, the Cartan subalgebra ${\mathfrak t}^{\mathbb C}_{\mathfrak a}$ of ${\mathfrak a}_{n}$ is the complexification of ${\mathfrak t}_{\mathfrak a} = {\mathfrak a}\oplus {\mathfrak t}_{0},$ where ${\mathfrak t}_{0} = {\mathbb R}\{i t_{\alpha_{1}} + 2 i t_{\alpha_{2}},\; i t_{\alpha_{k}}\;(k = 3,\dots ,n)\}.$
 
  Put $X= \beta U^{0}_{\alpha_{1}},$ where $\beta = (2\langle \alpha_{1},\alpha_{1}\rangle)^{-1}.$ Then, using Lemma \ref{bracket} and (\ref{*}), (\ref{**}) and (\ref{***}), we have
  \[
 {\rm ad}^{2}_{X}U^{1}_{\alpha_{1}} = -U^{1}_{\alpha_{1}},\quad {\rm ad}^{2}_{X}U^{a}_{\alpha_{1(j+1)}} = -\frac{1}{4}U^{a}_{\alpha_{1(j+1)}},
 \]
 for all $a = 0,1$ and $j =1,\dots, n-1.$ Hence, ${\mathfrak m}_{\varepsilon} = {\mathbb R}U^{1}_{\alpha_{1}}$ and ${\mathfrak m}_{\varepsilon/2}= {\mathbb R}\{U^{a}_{\alpha_{1(j+1)}}\colon a = 0,1;\;1\leq j\leq n-1\}.$ Since $\dim {\mathfrak m}_{\varepsilon/2} = 2(n-1),$ it follows that $\Sigma^{+} = \{\varepsilon, \frac{1}{2}\varepsilon\}.$ 
 
 We choose the inner product $\langle\cdot,\cdot\rangle$ such that on ${\mathfrak m}$ it is proportional to ${\mathbf B}_{\mid {\mathfrak m}}$ and $\langle X,X\rangle = 1.$ Then, $\langle \cdot,\cdot \rangle = -\langle\alpha_{1},\alpha_{1}\rangle {\mathbf B}(\cdot,\cdot).$ Hence $\xi_{\varepsilon} = \beta U^{1}_{\alpha_{1}}$ is a unit vector of ${\mathfrak m}_{\varepsilon}$ and the set of vectors
 \[
 \{\xi^{j,a}_{\varepsilon/2} = \beta U^{a}_{\alpha_{1(j+1)}}\colon\; a = 0,1,\; j = 1,\dots, n-1\}
 \]
 forms an orthonormal basis for ${\mathfrak m}_{\varepsilon/2}.$ Applying Lemma \ref{bracket} and  (\ref{eq.ms4.3}), ${\mathfrak k}_{\varepsilon} = {\mathbb R}\zeta_{\varepsilon}$ and ${\mathfrak k}_{\varepsilon/2} = {\mathbb R}\{\zeta^{j,a}_{\varepsilon/2}\colon \, a = 0,1,\; j = 1,\dots, n-1\},$ where $\zeta_{\varepsilon} = -{\rm i}\langle\alpha_{1},\alpha_{1}\rangle^{-1} t_{\alpha_{1}}$ and $\zeta^{j,a}_{\varepsilon/2} = \beta U^{a}_{\alpha_{2(j+1)}}.$
 
 Let $\Delta^{+}_{1}(t) = \{\alpha\in \Delta^{+}(t)\colon \alpha\pm \alpha_{1}\not\in\Delta\} = \{\alpha_{pq}\colon 3\leq p\leq q \leq n\}.$ Then ${\mathfrak h} = {\mathfrak h}_{0} \oplus {\mathfrak h}_{1},$ where ${\mathfrak h}_{0} = {\mathfrak t}_{0}$ and ${\mathfrak h}_{1} = \sum_{\alpha\in \Delta^{+}_{1}(t)}
 ({\mathbb R} U^{0}_{\alpha} + {\mathbb R}U^{1}_{\alpha}).$ From Lemma \ref{bracket}, the vector $Z_{1}$ given by
 \begin{equation}\label{Z1}
  Z_{1} = (n-1)(i t_{\alpha_{1}} + 2 i t_{\alpha_{2}}) + 2((n-2)i t_{\alpha_{3}} + (n-3)i t_{\alpha_{4}} +\dots + i t_{\alpha_{n}})
  \end{equation}
  belongs to the center ${\mathfrak z}({\mathfrak h})$ of ${\mathfrak h}$ and 
  \begin{equation}\label{4.27}
  {\mathfrak h} = {\mathbb R}Z_{1} \oplus {\mathfrak h}_{s} ,
   \end{equation}
   where ${\mathfrak h}_{s} = {\mathbb R}\{{\rm i}t_{\alpha_{k}}\colon k = 3,\dots, n\}\oplus {\mathfrak h}_{1} = {\mathfrak s}{\mathfrak u}(n-1)$ is the maximal semisimple ideal $[{\mathfrak h},{\mathfrak h}]$ of ${\mathfrak h}.$ By direct calculation, using (\ref{eq.ms4.3}) and (\ref{brack1}), the nonzero brackets of basic elements of ${\mathfrak m}^{+}\oplus {\mathfrak k}^{+}$ satisfy the following equalities:
 \begin{equation}\label{brackTCP}
 \begin{array}{lcl}
  [\xi_{\varepsilon},\xi^{j,a}_{\varepsilon/2}]  =  [\zeta_{\varepsilon},\zeta^{j,a}_{\varepsilon/2}] = \frac{(-1)^{a}}{2}\zeta^{j,a+1}_{\varepsilon/2}, & &[\xi_{\varepsilon}, \zeta^{j,a}_{\varepsilon/2}] = -[\zeta_{\varepsilon}, \xi^{j,a}_{\varepsilon/2}] = \frac{(-1)^{a}}{2}\xi^{j,a+1}_{\varepsilon/2},\\[0.5pc] 
  [\xi^{j}_{\varepsilon/2},\zeta^{j}_{\varepsilon/2}]  =  [\xi^{j,1}_{\varepsilon/2},\zeta^{j,1}_{\varepsilon/2}] = -\frac{1}{2}X, & & [\xi^{j}_{\varepsilon/2}, \zeta^{j,1}_{\varepsilon/2}] = -[\xi^{j,1}_{\varepsilon/2}, \zeta^{j}_{\varepsilon/2}] = \frac{1}{2}\xi_{\varepsilon},\\[0.5pc] 
 [\xi_{\varepsilon},\zeta_{\varepsilon}] = -X, & & [\xi^{j}_{\varepsilon/2},\xi^{j,1}_{\varepsilon/2}]_{\overline{\mathfrak m}} = -
[\zeta^{j}_{\varepsilon/2},\zeta^{j,1}_{\varepsilon/2}]_{\overline{\mathfrak m}} = -\frac{1}{2}\zeta_{\varepsilon}.
  \end{array}
 \end{equation}
 
 \vspace{0.1cm}
 
 \noindent {\bf Case 3: ${\mathbb H}{\mathbf P}^{n}$} $(n\geq 1).$ The Lie algebra ${\mathfrak g} = \asp(n+1)$ of $\Sp(n+1)$ is a compact real form of ${\mathfrak c}_{n+1} = \asp(n+1,{\mathbb C}),$
$\xymatrix@R=.5cm@C=.8cm{ \stackrel{2}{\stackrel{\circ}{\alpha_{1}}}
\ar@{-}[r] & \stackrel{2}{\stackrel{\circ}{\alpha_{2}}} \ar@{-}[r] &
\; \dots \ar@{-}[r] & \stackrel{2}{\stackrel{\circ}{\alpha_{n}}}
\ar@2{<-}[r] & \stackrel{1}{\stackrel{\circ}{\alpha_{n+1}}}}
$ and 
\[
\Delta^{+}  =  \{\alpha_{pq} \; (1\leq p\leq q\leq n+1),
\widetilde{{\alpha}_{pq}} \; (1\leq p\leq q\leq n)\}
\]
is a set of positive roots, where $\widetilde{{\alpha}_{pq}}= \alpha_{p} +\dots + 2\alpha_{q} + \dots +
2\alpha_{n} + \alpha_{n+1},$ for $1\leq p< q\leq n,$ and $\widetilde{\alpha_{pp}} = 2\alpha_{p} + \dots + 2\alpha_{n}+ \alpha_{n+1},$ for $1\leq p\leq n.$ The maximal root is $\mu = \widetilde{\alpha_{11}}$ and the inner product $\langle\cdot,\cdot\rangle$ on $\Pi = \{\alpha_{1},\dots,\alpha_{n+1}\}$ satisfies
\[
\langle\alpha_{1},\alpha_{1}\rangle = \dots = \langle\alpha_{n},\alpha_{n}\rangle  = \frac{1}{2}\langle\alpha_{n+1},\alpha_{n+1}\rangle,\quad \langle\alpha_{1},\alpha_{1}\rangle = -\langle\alpha_{n},\alpha_{n+1}\rangle = -\frac{1}{2}\langle\alpha_{j},\alpha_{j+1}\rangle,
\]
$1\leq j\leq n-1,$ being zero the rest of products of basic elements.

 The quaternionic projective space ${\mathbb H}{\mathbf P}^{n}$ appears for $\sigma = {\rm Ad}_{\exp 2\pi{\rm i}t},$ where $t = t_{1}.$ Then, $\Pi(t) = \{-\mu,\alpha_{2},\dots,\alpha_{n+1}\}.$ Hence, $\Delta^{+}(t) = \{\alpha_{p,q}\;(2\leq p\leq q \leq n+1),\; \widetilde{\alpha_{p,q}}\;(2\leq p\leq q \leq n),\mu\}$ and $\Delta^{+}\setminus \Delta^{+}(t) = \{\alpha_{1q}\;(1\leq q \leq n+1),\widetilde{\alpha_{1q}}\;(2\leq q\leq n)\}.$
 
 From (\ref{kk1}), ${\mathfrak k} = {\mathfrak s}{\mathfrak p}(1)\oplus {\mathfrak s}{\mathfrak p}(n),$ where ${\mathfrak s}{\mathfrak p}(1)\cong{\mathbb R}\{{\rm i}t_{\alpha_{\mu}}, U^{0}_{\mu},U^{1}_{\mu}\}.$ By Lemma \ref{bracket}, because $\mu\pm \alpha \notin\Delta$ and $\langle\mu,\alpha\rangle = 0,$ for all $\alpha \in \Delta^{+}(t),$ this subalgebra is the center ${\mathfrak z}({\mathfrak k})$ of ${\mathfrak k}.$

We choose as Cartan subalgebra to ${\mathfrak a} = {\mathbb R}U^{0}_{\alpha_{1}}.$ From Lemma \ref{bracket}, it follows that the Cartan subalgebra ${\mathfrak t}^{\mathbb C}_{\mathfrak a}$ of ${\mathfrak c}_{n+1}$ containing ${\mathfrak a}$ is the complexification of ${\mathfrak t}_{\mathfrak a} = {\mathfrak a} \oplus {\mathfrak t}_{0},$ where ${\mathfrak t}_{0} = {\mathbb R}\{{\rm i}t_{\alpha_{1}} + 2{\rm i}t_{\alpha_{2}},{\rm i}t_{\alpha_{k}}\; (k = 3,\dots, n+1)\}.$ Put $X = \beta U^{0}_{\alpha_{1}},$ where $\beta = (2\langle\alpha_{1},\alpha_{1}\rangle)^{-1/2}.$ Applying again Lemma \ref{bracket} and using (\ref{*}), (\ref{**}) and (\ref{***}), we have
$$
\begin{array}{lclclcl}
{\rm ad}^{2}_{X}U^{1}_{\alpha_{1}} & =  & -U^{1}_{\alpha_{1}},& &{\rm ad}^{2}_{X}U^{a}_{\widetilde{\alpha_{12}}} & = & -U^{a}_{\widetilde{\alpha_{12}}},\\[0.4pc]
{\rm ad}^{2}_{X}U^{a}_{\alpha_{1(j+1)}} & = & -\frac{1}{4}U^{a}_{\alpha_{1
(j+1)}},& & {\rm ad}^{2}_{X}U^{a}_{\widetilde{\alpha_{1(k+2)}}} & = & -\frac{1}{4}U^{a}_{\widetilde{\alpha_{1(k+2)}}},
\end{array}
$$
for $a = 0,1,$ $j = 1,\dots,n$ and $k = 1,\dots,n-2.$ We choose the inner product $\langle \cdot,\cdot \rangle = -\langle\alpha_{1},\alpha_{1}\rangle {\mathbf B}(\cdot,\cdot).$ Then $\{\xi_{\varepsilon}^{1},\xi^{2}_{\varepsilon},\xi^{3}_{\varepsilon}\}$ is an $\langle\cdot,\cdot\rangle$-orthonormal basis of ${\mathfrak m}_{\varepsilon},$ where $\xi^{1}_{\varepsilon} = \beta U^{1}_{\alpha_{1}}$ and $\xi^{2 +a}_{\varepsilon} = \beta U^{a}_{\widetilde{\alpha_{12}}},$ $a = 0,1,$ and the set of vectors
\[
 \xi^{j,a}_{\varepsilon/2}  = \beta U^{a}_{\alpha_{1(j+1)}}\;\;(j = 1,\dots, n), \quad \xi^{n+k,a}_{\varepsilon/2} =  \beta U^{a}_{\widetilde{\alpha_{1(k+2)}}}\; \;(k = 1,\dots, n-2)
 \]
  form an orthonormal basis for ${\mathfrak m}_{\varepsilon/2}.$ From Lemma \ref{bracket} and (\ref{eq.ms4.3}), we have that the vectors
\[
 \zeta^{1}_{\varepsilon} = -\frac{1}{\langle\alpha_{1},\alpha_{1}\rangle} {\rm i}{t_{\alpha_{1}}}, \quad \zeta^{2+a}_{\varepsilon} = \frac{\sqrt{2}\beta}{2}(U^{a}_{\widetilde{\alpha_{22}}} - U^{a}_{\mu}),\;\;(a = 0,1)
 \]
 form an orthonormal basis of ${\mathfrak k}_{\varepsilon}$ and, of ${\mathfrak k}_{\varepsilon/2},$ the following set of vectors
 \[
 \zeta^{j,a}_{\varepsilon/2}  = \beta U^{a}_{\alpha_{2(j+1)}}, \quad \zeta^{n+k,a}_{\varepsilon/2} = \beta U^{a}_{\widetilde{\alpha_{2(k+2)}}}.
  \]
 Let $\Delta^{+}_{1}(t) = \{\alpha\in \Delta^{+}(t)\colon \alpha\pm \alpha_{1}\not\in\Delta\} = \{\alpha_{pq}\; (3\leq p\leq q \leq n +1),\; \widetilde{\alpha_{pq}}\; (3\leq p\leq q \leq n)\}.$ Then ${\mathfrak h} = {\mathfrak h}_{0} \oplus {\mathfrak h}_{1},$ where ${\mathfrak h}_{0} = {\mathfrak t}_{0}$ and ${\mathfrak h}_{1}$ is the subspace 
 \[
 {\mathfrak h}_{1} = {\mathbb R}\{ U^{0}_{\mu} + U^{0}_{\widetilde{\alpha_{22}}},U^{1}_{\mu} + U^{1}_{\widetilde{\alpha_{22}}}\}\oplus \sum_{\alpha\in \Delta^{+}_{1}(t)}
 ({\mathbb R} U^{0}_{\alpha} + {\mathbb R}U^{1}_{\alpha}).
 \] 
 Hence, ${\mathfrak h} = {\mathbb R}\{{\rm i} t_{\alpha_{1}} + 2{\rm  i} t_{\alpha_{2}}, U^{a}_{\mu} +U^{a}_{\widetilde{\alpha_{22}}};\;a = 0,1\}\oplus {\mathfrak s}{\mathfrak p}(n-1).$ By Lemma \ref{bracket} and since $\mu\pm \widetilde{\alpha_{22}}\notin\Delta,$  $\langle\mu,\widetilde{\alpha_{22}}\rangle = 0$ and $\langle\mu,\mu\rangle = \langle\widetilde{\alpha_{22}},\widetilde{\alpha_{22}}\rangle = 2\langle\alpha_{1},\alpha_{1}\rangle,$ ${\mathbb R}\{{\rm i}t_{\mu} + {\rm i}t_{\widetilde{\alpha_{22}}}, U^{a}_{\mu} + U^{a}_{\widetilde{\alpha_{22}}}\}$ is a subalgebra isomorphic to ${\mathfrak s}{\mathfrak p}(1),$ which using that $\langle\mu,\alpha\rangle = \langle\widetilde{\alpha_{22}},\alpha\rangle = 0,$ $\mu\pm\alpha\notin\Delta$ and $\widetilde{\alpha_{22}}\pm \alpha\notin\Delta$ for all $\alpha\in \Delta^{+}_{1}(t),$ it is the center ${\mathfrak z}({\mathfrak h})$ of ${\mathfrak h}.$ On the other hand, ${\rm i}t_{\mu} + {\rm i}t_{\widetilde{\alpha_{22}}} = 2(({\rm i}t_{\alpha_{1}} + 2{\rm i}t_{\alpha_{2}}) + 2({\rm i}t_{\alpha_{3}} + \dots + {\rm i}t_{\alpha_{n}}) + {\rm i}t_{\alpha_{n+1}}).$ This implies that ${\mathfrak h} = {\mathfrak s}{\mathfrak p}(1)\oplus {\mathfrak s}{\mathfrak p}(n-1).$ 
 
 Using (\ref{*}), (\ref{**}), (\ref{***}) and Lemma \ref{bracket}, we obtain
$$
 \begin{array}{l}
  [\xi^{1}_{\varepsilon},\xi^{l,a}_{\varepsilon/2}]  =  [\zeta^{1}_{\varepsilon},\zeta^{l,a}_{\varepsilon/2}] = \frac{(-1)^{a}}{2}\zeta^{l,a+1}_{\varepsilon/2}, \quad [\zeta^{1}_{\varepsilon}, \xi^{l,a}_{\varepsilon/2}] = -[\xi^{1}_{\varepsilon}, \zeta^{l,a}_{\varepsilon/2}] = \frac{(-1)^{a+1}}{2}\xi^{l,a+1}_{\varepsilon/2},\\[0.5pc] 
 
 [\xi^{2+b}_{\varepsilon},\xi^{k,a}_{\varepsilon/2}]  =  [\zeta^{2+b}_{\varepsilon},\zeta^{k,a}_{\varepsilon/2}] = \frac{(-1)^{c}}{2}\zeta^{n+k,a+b}_{\varepsilon/2}, \quad [\xi^{2+b}_{\varepsilon}, \xi^{n+k,a}_{\varepsilon/2}] = [\zeta^{2+b}_{\varepsilon}, \zeta^{n+k,a}_{\varepsilon/2}] = \frac{(-1)^{c+1}}{2}\zeta^{k,a+b}_{\varepsilon/2},\\[0.5pc]

 [\zeta^{2+b}_{\varepsilon/2},\xi^{k,a}_{\varepsilon/2}]  = - [\xi^{2+b}_{\varepsilon},\zeta^{k,a}_{\varepsilon/2}] = \frac{(-1)^{c+1}}{2}\xi^{n+k,a+b}_{\varepsilon/2}, \;\, [\zeta^{2+b}_{\varepsilon}, \xi^{n+k,a}_{\varepsilon/2}] = -[\xi^{2+b}_{\varepsilon}, \zeta^{n+k,a}_{\varepsilon/2}] = \frac{(-1)^{c}}{2}\xi^{k,a+b}_{\varepsilon/2},\\[0.5pc] 
 
 [\xi^{2+b}_{\varepsilon},\xi^{n-1,a}_{\varepsilon/2}] = [\zeta^{2+b}_{\varepsilon},\zeta^{n-1,a}_{\varepsilon/2}] = \frac{(-1)^{c}}{2}\zeta^{n,a+b}_{\varepsilon/2}, \quad [\xi^{2+b}_{\varepsilon},\xi^{n,a}_{\varepsilon/2}] = [\zeta^{2 +b}_{\varepsilon},\zeta^{n,a}_{\varepsilon/2}]= \frac{(-1)^{c+1}}{2}\zeta^{n-1,a+b}_{\varepsilon/2}\\[0.5pc]
 
[\zeta^{2+b}_{\varepsilon},\xi^{n-1,a}_{\varepsilon/2}] = -[\xi^{2+b}_{\varepsilon},\zeta^{n-1,a}_{\varepsilon/2}] = \frac{(-1)^{c+1}}{2}\xi^{n,a+b}_{\varepsilon}, \;\, [\zeta^{2+b}_{\varepsilon},\xi^{n,a}_{\varepsilon/2}] = -[\xi^{2+b}_{\varepsilon},\zeta^{n,a}_{\varepsilon/2}] = \frac{(-1)^{c}}{2}\xi^{n-1,a+b}_{\varepsilon/2},
  \end{array}
 $$ 
 where $1\leq l\leq 2n-2,$ $ 1\leq k\leq n-2,$ $ a,b = 0,1$ and $c = a(b+1).$
 
\noindent{\bf Case 4: ${\mathbb C}a{\mathbf P}^{2}.$} On ${\mathfrak f}_{4},$\vspace{-0.3cm}
$
 \xymatrix@R=.5cm@C=.8cm{
 \stackrel{2}{\stackrel{\circ}{\alpha_{1}}} \ar@1{-}[r] &
   \stackrel{3}{\stackrel{\circ}{\alpha_{2}}} \ar@2{->}[r]
   & \stackrel{4}{\stackrel{\circ}{\alpha_{3}}} \ar@1{-}[r] &
\stackrel{2}{\stackrel{\circ}{\alpha_{4}}}},
$ a set of positive roots is given by
$$
\begin{array}{lcl}
\Delta^{+} &\hspace{-0,3cm}  = \hspace{-0,3cm}&\{\alpha_{p,q}\;
 (1\leq p \leq q \leq 4),\; \alpha_{2,3} + \alpha_{3}, \alpha_{1,q} + \alpha_{p,3}\;(p=2,3;\;q=3,4),
 \alpha_{2,4} + \alpha_{3,q}\; (q=3,4), \\
  &\hspace{-0,6cm} & \alpha_{14} + \alpha_{p4}\;(p=2,3), \alpha_{1,4} + \alpha_{2,q} + \alpha_{3}\;(q = 3,4),\alpha_{1,4}+\alpha_{2,4}
  +\alpha_{p,3}+\alpha_{3}\;(p=2,3), \mu \},
\end{array}
$$
where $\mu= 2\alpha_{1,4} + \alpha_{2,3} + \alpha_{3}$ is the maximal root. We get the following relationships for the nonzero products on $\Pi=\{\alpha_{1},\alpha_{2},\alpha_{3},\alpha_{4}\}:$ 
\[
1 =  \langle\alpha_{3},\alpha_{3}\rangle = \langle\alpha_{4},\alpha_{4}\rangle  =\frac{1}{2} \langle\alpha_{1},\alpha_{1}\rangle = \frac{1}{2} \langle\alpha_{2},\alpha_{2}\rangle = -\langle\alpha_{1},\alpha_{2}\rangle =  -\langle\alpha_{2},\alpha_{3}\rangle = -2\langle\alpha_{3},\alpha_{4}\rangle.
\]
The Cayley plane ${\mathbb C}a{\mathbf P}^{2} = {\rm F}_{4}/{\rm Spin}(9)$ is obtained for $\sigma = {\rm Ad}_{\exp 2\pi{\rm i}t},$ where $t = t_{4}.$ Then $\Pi(t) = \{-\mu,\alpha_{1},\alpha_{2},\alpha_{3}\}.$ Putting $\beta_{1} = -\mu,$ $\beta_{2} = \alpha_{1},$ $\beta_{3} = \alpha_{2}$ and $\beta_{4} = \alpha_{3},$ we have
\[
\Delta^{+}(t) = \{\beta_{pq}\;(1\leq p\leq q\leq 4),\;\widetilde{\beta_{pq}} = \beta_{p} + \dots + 2\beta_{q} +\dots + 2\beta_{4}\; (1\leq p< q \leq 4)\},
\]
which coincides with a positive root set for ${\mathfrak b}_{4}:$
$
 \xymatrix@R=.5cm@C=.8cm{
 \stackrel{1}{\stackrel{\circ}{\beta_{1}}} \ar@1{-}[r] &
   \stackrel{2}{\stackrel{\circ}{\beta_{2}}} \ar@1{-}[r]
   & \stackrel{2}{\stackrel{\circ}{\beta_{3}}} \ar@2{->}[r] &
\stackrel{2}{\stackrel{\circ}{\beta_{4}}}},
$ and
\[
\Delta^{+}\setminus \Delta^{+}(t) = \{\alpha_{4},\alpha_{3,4},\alpha_{2,4},\alpha_{1,4}, \alpha_{1,4} + \alpha_{p,3}\;(p = 2,3), \alpha_{2,4} + \alpha_{3}, \alpha_{14} + \alpha_{2,3} + \alpha_{3}\}.
\]
We choose as Cartan subalgebra to ${\mathfrak a} = {\mathbb R}U^{0}_{\alpha_{4}}.$ Then, from Lemma \ref{bracket}, ${\mathfrak t}^{\mathbb C}_{\mathfrak a}$ is the complexification of ${\mathfrak t}_{\mathfrak a} = {\mathfrak a} \oplus {\mathfrak t}_{0},$ where ${\mathfrak t}_{0} = {\mathbb R}\{ {\rm i}t_{\alpha_{1}}, {\rm i}t_{\alpha_{2}}, {\rm i}t_{\alpha_{4}} + 2{\rm i}t_{\alpha_{3}}\}={\mathbb R}\{{\rm i}t_{\beta_{1}},{\rm i}t_{\beta_{2}},{\rm i}t_{\beta_{3}}\}.$ Note that $2(\alpha_{4} + 2\alpha_{3}) = -(\beta_{1} + 2\beta_{2} + 3\beta_{3}).$

Put $X = \frac{\sqrt{2}}{2}U^{0}_{\alpha_{4}}$ and take $\langle\cdot,\cdot\rangle= -{\mathbf B}.$ Then applying again Lemma \ref{bracket} and (\ref{*}), (\ref{**}) and (\ref{***}), the following basis is orthonormal for ${\mathfrak m}_{\varepsilon}$ and ${\mathfrak m}_{\varepsilon/2},$ respectively:
$$
\begin{array}{l}
\{\xi^{1}_{\varepsilon} = \frac{\sqrt{2}}{2}U^{1}_{\alpha_{4}},\xi^{p,a}_{\varepsilon} = \frac{\sqrt{2}}{2}U^{a}_{\alpha_{14} + \alpha_{p3}}\;(p=2,3), \xi^{4,a}_{\varepsilon} = \frac{\sqrt{2}}{2}U^{a}_{\alpha_{24} + \alpha_{3}}\}\\[0.4pc]
\{\xi^{q,a}_{\varepsilon/2} = \frac{\sqrt{2}}{2}U^{a}_{\alpha_{q4}}\;(q = 1,2,3),\; \xi^{4,a}_{\varepsilon/2} = \frac{\sqrt{2}}{2}U^{a}_{\alpha_{14} + \alpha_{23} + \alpha_{3}}\}.
\end{array}
$$
 The corresponding orthonormal basis for ${\mathfrak k}_{\varepsilon}$ is $\{\zeta^{1}_{\varepsilon},\zeta^{p,a}_{\varepsilon}\;(p=2,3), \zeta^{4,a}_{\varepsilon}\}$ and, for ${\mathfrak k}_{\varepsilon/2},$ $\{\zeta^{q,a}_{\varepsilon/2}\;(q = 1,2,3),\zeta^{4,a}_{\varepsilon/2}\},$ where
$$
\begin{array}{l}
\zeta^{1}_{\varepsilon} = -{\rm i}t_{\alpha_{4}} = \frac{1}{2}({\rm i}t_{\beta_{1}} + 2{\rm i}t_{\beta_{2}} + 3{\rm i}t_{\beta_{3}} + 4{\rm i}t_{\beta_{4}}),\\[0.4pc]
\zeta^{p,a}_{\varepsilon} = \frac{1}{2}(U^{a}_{\alpha_{13} + \alpha_{p3}}- U^{a}_{\alpha_{14} + \alpha_{p4}}) = \frac{1}{2}(U^{a}_{\widetilde{\beta_{2(p+1)}}} + (-1)^{a}U^{a}_{\widetilde{\beta_{1(6-p)}}}),\\[0.4pc]
\zeta^{4,a}_{\varepsilon} = \frac{1}{2}(U^{a}_{\alpha_{23} + \alpha_{3}} - U^{a}_{\alpha_{24} + \alpha_{34}}) = \frac{1}{2}(U^{a}_{\widetilde{\beta_{34}}} + (-1)^{a}U^{a}_{\widetilde{\beta_{12}}}),\\[0.4pc]
\zeta^{q,a}_{\varepsilon/2} = \frac{\sqrt{2}}{2}U^{a}_{\alpha_{q3}} =\frac{\sqrt{2}}{2}U^{a}_{\beta_{(q+1)4}},\quad \zeta^{4,a}_{\varepsilon/2} = \frac{\sqrt{2}}{2}U^{a}_{\alpha_{14} + \alpha_{24} + \alpha_{3}} = \frac{\sqrt{2}}{2}(-1)^{a+1}U^{a}_{\beta_{14}}.
\end{array}
$$

The subalgebra ${\mathfrak h}\subset {\mathfrak k}$ is of type ${\mathfrak b}_{3},$ given by ${\mathfrak h} = {\mathfrak h}_{0}\oplus {\mathfrak h}_{1},$ where ${\mathfrak h}_{0} = {\mathfrak t}_{0}$ and 
\[
{\mathfrak h}_{1}={\mathbb R}\{U^{a}_{\widetilde{\beta_{2(p+1)}}} + (-1)^{a+1}U^{a}_{\widetilde{\beta_{1(6-p)}}}\;(p=2,3), U^{a}_{\widetilde{\beta_{34}}} + (-1)^{a+1}U^{a}_{\widetilde{\beta_{12}}}, U^{a}_{\beta_{st}}\;\;(1\leq s\leq t\leq 3)\}.
\]
This proves Lemma \ref{restricted} and, by direct calculation, also Lemma \ref{pbrack} for this case.

\section{Tangent sphere bundles of compact rank-one symmetric spaces} 

\subsection{Tangent sphere bundles as homogeneous manifolds} Let $(M=G/K,g)$ be a compact rank-one Riemannian symmetric space and let ${\mathfrak a} = {\mathbb R}\{X\}$ be a Cartan subspace of ${\mathfrak m} = T_{o}M,$ where $X$ is the unique vector in ${\mathfrak a}$ such that ${\varepsilon}_{\mathbb R}(X) = 1$ and the $G$-invariant Riemannian metric $g$ is determined by the inner product $\langle\cdot,\cdot\rangle$ such that $\langle X,X\rangle = 1.$ Consider the Weyl chamber $W^{+}$ in ${\mathfrak a}$ containing $X$ given by
\[
W^{+} = \{ w\in {\mathfrak a}\colon \varepsilon_{\mathbb R}(w)>0\} = \{tX\;:\; t\in {\mathbb R}^{+}\},
\]
which is naturally identified with ${\mathbb R}^{+}.$ Since each nonzero ${\rm Ad}(K)$-orbit in ${\mathfrak m}$ intersects ${\mathfrak a}$ and also $W^{+},$ the open dense subset of regular points ${\mathfrak m}^{R}:= {\rm Ad}(K)(W^{+})$ of ${\mathfrak m}$  is ${\mathfrak m}\setminus \{0\}$ and $\phi(G\times_{K}{\mathfrak m}^{R})$ is the punctured tangent bundle $D^{+}(G/K).$ This means that the action of the subgroup ${\rm Ad}(K)$ on the unit sphere ${\mathcal S}_{\mathfrak m}(1)$ of ${\mathfrak m}$ is transitive. Such property characterizes the class of compact rank-one symmetric spaces. In fact, if ${\rm rank} \,G/K\geq 2,$ the hyperplane $\lambda(w) =0$ for each restricted root $\lambda$ intersects ${\mathcal S}_{\mathfrak m}(1)$ and it implies that the ${\rm Ad}(K)$-action cannot be transitive on ${\mathcal S}_{\mathfrak m}(1).$ 

Let $H$ be the closed subgroup of $K$ defined by
 \[
 H = \{k\in K\colon {\rm Ad}_{k} X = X\}.
 \] 
 Then $H = G_{\mathfrak a}\cap K,$ where $G_{\mathfrak a}$ is the centralizer of ${\mathfrak a},$ and ${\mathfrak h}$ is the Lie algebra of $H.$ Moreover, ${\mathcal S}_{\mathfrak m}(1),$ under the $K$-action determined by the isotropy representation, is expressed as the quotient manifold $K/H.$  
 
Consider the projection $\pi_{H}\times{\rm id}:G\times {\mathbb R}^{+}\to G/H\times {\mathbb R}^{+},$ $(a,t)\mapsto (aH,t),$ and the mapping
\begin{equation}\label{f+}
f^{+}\colon G/H\times {\mathbb R}^{+}\to G\times_{K}({\mathfrak m}\setminus\{0\}),\quad (aH,t)\mapsto [(a,tX)],
\end{equation}
which is a well-defined $G$-equivariant diffeomorphism and $(f^{+})^{-1}([(a,{\rm Ad}_{k}w)]) = (akH,t),$ for all $a\in G,$ $k\in K$ and $w= tX\in W^{+}.$ Then, $G/H\times {\mathbb R}^{+}$ is identified via $\phi\circ f^{+}$ with $D^{+}(G/K).$ 
\begin{proposition}\label{phomo} The natural action of $G$ on $T(G/K)\cong G\times_{K}{\mathfrak m}$ is of cohomogeneity one. The principal orbits are the tangent sphere bundles $T_{r}(G/K)$ of radius $r>0,$ which are diffeomorphic to the quotient manifold $G/H.$ The zero section is the unique singular orbit. Moreover, the natural projection $\pi^{T}\colon T_{r}(G/K)\to G/K$ determines the homogeneous fibration 
 \begin{equation}\label{fibration}
 {\mathcal S}_{\mathfrak m}(r) = K/H\to T_{r}(G/K) = G/H\to G/K,\quad aH \to aK.
 \end{equation}
 \end{proposition}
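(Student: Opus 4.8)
The plan is to read off all four assertions from the $G$-equivariant diffeomorphism $\phi$ of (\ref{eq.phi}), together with the transitivity of ${\rm Ad}(K)$ on ${\mathcal S}_{\mathfrak m}(1)$ and the diffeomorphism $f^{+}$ of (\ref{f+}), both already established above. First I would transport everything to the model $G\times_{K}{\mathfrak m},$ on which the natural $G$-action is $b\cdot[(a,x)] = [(ba,x)].$ Since the norm $|x| = \langle x,x\rangle^{1/2}$ is ${\rm Ad}(K)$-invariant, it descends to a well-defined $G$-invariant function $\nu([(a,x)]) = |x|,$ whose fibres $\nu^{-1}(r)$ correspond under $\phi$ exactly to the tangent sphere bundles $T_{r}(G/K)$ (and to the zero section when $r = 0$).

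Next I would show that $G$ acts transitively on each level set $\nu^{-1}(r),$ $r>0.$ Given $[(a,x)]$ and $[(a',x')]$ with $|x| = |x'| = r,$ the transitivity of ${\rm Ad}(K)$ on ${\mathcal S}_{\mathfrak m}(r)$ yields $k\in K$ with ${\rm Ad}_{k}x = x';$ the defining relation of $G\times_{K}{\mathfrak m}$ then gives $[(a',x')] = [(a'k,x)],$ and $b = a'ka^{-1}$ carries $[(a,x)]$ to $[(a',x')].$ Hence each $T_{r}(G/K)$ is a single orbit. Computing the isotropy at $[(e,rX)]$ shows it equals $\{k\in K\colon {\rm Ad}_{k}X = X\} = H,$ so $T_{r}(G/K)\cong G/H;$ this is precisely the restriction $f^{+}(\cdot,r).$ The zero section $\nu^{-1}(0)$ is the orbit through $[(e,0)],$ with isotropy all of $K,$ hence diffeomorphic to $G/K.$

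Assembling these, the orbit space is identified via $\nu$ with $[0,\infty),$ so the action has cohomogeneity one; the orbits over $r>0$ are the hypersurfaces $T_{r}(G/K),$ the principal orbits of codimension one, while the zero section $G/K,$ of strictly smaller dimension $\dim{\mathfrak m} < 2\dim{\mathfrak m}-1,$ is the only non-principal orbit, that is, the unique singular orbit. Finally, under $T_{r}(G/K) = G/H$ the base-point projection $\pi^{T}$ becomes the canonical map $aH\mapsto aK$ of $G/H\to G/K$ (well defined since $H\subset K$); its fibre over $o = eK$ consists of the classes $[(k,rX)] = [(e,{\rm Ad}_{k}(rX))],$ $k\in K,$ which exhaust ${\mathcal S}_{\mathfrak m}(r)$ by transitivity and form $K/H.$ This yields the homogeneous fibration (\ref{fibration}).

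The computations are routine once the two facts --- transitivity of ${\rm Ad}(K)$ on the sphere and the isotropy identity $\{k\in K\colon {\rm Ad}_{k}X = X\} = H$ --- are in hand, and these are exactly what the preceding rank-one analysis supplies. So I do not anticipate a genuine obstacle, only the bookkeeping of keeping the equivalence relation on $G\times_{K}{\mathfrak m}$ straight while verifying transitivity and identifying the isotropy subgroup.
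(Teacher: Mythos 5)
Your proposal is correct and follows essentially the same route as the paper: both arguments rest on the transitivity of ${\rm Ad}(K)$ on ${\mathcal S}_{\mathfrak m}(r)$ to identify each level set of the norm with a single $G$-orbit $G\times_{K}{\mathcal S}_{\mathfrak m}(r)\cong G/H$, with the zero section as the lone singular orbit and $H$ appearing as the isotropy of the ${\rm Ad}(K)$-action at $rX$. The only cosmetic difference is that you compute the isotropy subgroup at $[(e,rX)]$ directly, whereas the paper invokes the already-constructed diffeomorphism $f^{+}\circ\iota_{r}$; your explicit remark that the zero section has strictly smaller dimension (hence is singular rather than exceptional) is a small point the paper leaves implicit.
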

\begin{proof} From the transitivity of the isotropy action on ${\mathcal S}_{\mathfrak m}(1),$ for each $u\in {\mathfrak m}\setminus\{0\}$ there exists $k\in K$ such that $u = r{\rm Ad}_{k}X,$ where $\langle u,u\rangle = r^{2},$ and the $G$-orbit $G\cdot [(a,u)] = G\times_{K}\{u\}$ of $[(a,u)]$ in $G\times_{K}{\mathfrak m}$ can be expressed as 
\[
G\cdot [(a,u)] = \{[(a,r{\rm Ad}_{k}X)]\colon a\in G\} = G\times_{K}{\mathcal S}_{\mathfrak m}(r).
\]
Let $\iota_{r}\colon G/H\to G/H\times {\mathbb R}^{+}$ be the imbedding given by $\iota_{r}(aH) = (aH,r).$ Then $f^{+}\circ \iota_{r}$ is a $G$-equivariant diffeomorphism from $G/H$ onto $G\cdot [(a,u)].$ 

Taking into account that for each $(a,x)\in G\times{\mathfrak m},$ we get $\langle x,x\rangle = g_{\tau_{g}(o)}((\tau_{a})_{*o}x, (\tau_{a})_{*o}x) = g_{\tau_{a}(o)}(\phi[(a,x)],\phi[(a,x)]),$ the tangent sphere bundle $T_{r}(G/K)$ is given by
 \begin{equation}\label{TGK}
 T_{r}(G/K) = \phi(G\times_{K}{\mathcal S}_{\mathfrak m}(r)).
 \end{equation}
Hence, using that $\phi$ is $G$-equivariant, the $G$-orbit of $(\tau_{a})_{*o}u$ is $\phi(G\cdot[(a,u)] = T_{r}(G/K)$ and the orbit $\phi(G\cdot[(a,0)])$ of the zero vector in $T_{aK}(G/K)$ is clearly the zero section in $T(G/K).$ Therefore, the first part of the proposition is proved. For the second, note that by definition, $H$ is the isotropy subgroup of the ${\rm Ad}(K)$-action on ${\mathcal S}_{\mathfrak m}(r)$ at $rX.$
\end{proof}

The compact Lie groups acting effectively and transitively on some sphere ${\mathbb S}^{n}\subset {\mathbb R}^{n+1}$ have been classified by D. Montgomery and H. Samelson and A. Borel (see \cite{O} for more in\-for\-ma\-tion). They are given, together with their isotropy subgroups and isotropy representations in Table II, where the superscript indicates the dimension of the corresponding irreducible subspace.

\vspace{0.2cm}

 \begin{tabular}{llll}
\multicolumn{4}{c}{Table II. Compact transitive groups on spheres}\\[0.6pc]
\hline\noalign{\smallskip}
 $G$ & $K$ & $\dim G/K$ & \mbox{isotropy repr.}\\
\noalign{\smallskip}\hline\noalign{\smallskip}
$(1)\;\Lie{SO}(n)$ & $\Lie{SO}(n-1)$ & $n-1$ & \mbox{irred.}\\[0.3pc]
$(2)\;\Lie{SU}(n)$ & $\Lie{SU}(n-1)$ & $2n-1$ & ${\mathfrak m} = {\mathfrak m}_{1}^{1}\oplus {\mathfrak m}_{2}^{2n-2}$\\[0.3pc]
$(3)\;\Lie{U}(n)$ & $\Lie{U}(n-1)$ & $2n-1$ & ${\mathfrak m} = {\mathfrak m}_{1}^{1}\oplus {\mathfrak m}^{2n-2}_{2}$ \\[0.3pc]
$(4)\;\Lie{Sp}(n)$ & $\Lie{Sp}(n-1)$ & $4n-1$ & ${\mathfrak m} = {\mathfrak m}_{1}^{3}\oplus {\mathfrak m}_{2}^{4n-4}$ \\[0.3pc]
$(5)\;\Lie{Sp}(n)\times \Lie{Sp}(1)$ & $\Lie{Sp}(n-1)\times \Lie{Sp}(1)$ & $4n-1$ & ${\mathfrak m} = {\mathfrak m}_{1}^{3}\oplus {\mathfrak m}_{2}^{4n-4}$\\[0.3pc]
$(6)\;\Lie{Sp}(n)\times \Lie{U}(1)$ & $\Lie{Sp}(n-1)\times\Lie{U}(1)$ & $4n-1$ & ${\mathfrak m} = {\mathfrak m}_{1}^{1}\oplus{\mathfrak m}_{2}^{2}\oplus {\mathfrak m}_{3}^{4n-4}$\\[0.3pc]
$(7)\;\Lie{Spin}(9)$ & $\Lie{Spin}(7)$ & $15$ & ${\mathfrak m} = {\mathfrak m}_{1}^{7}\oplus {\mathfrak m}_{2}^{8}$\\[0.3pc]
$(8)\;\Lie{Spin}(7)$ & $\Lie{G}_{2}$ & $7$ & \mbox{irred.}\\[0.3pc]
$(9)\;\Lie{G}_{2}$ & $\Lie{SU}(3)$ & $6$ & \mbox{irred.}\\[0.3pc]
\hline
\end{tabular}

\vspace{0.1cm} 
\begin{proposition}\label{quotients} Tangent sphere bundles of compact rank-one symmetric spaces and their $\pi^{T}$-homogeneous fibrations admit the following quotient expressions:
 $$
 \begin{array}{l}
 {\mathbb S}^{n-1} = \SO(n)/\SO(n-1)\to T_{r}{\mathbb S}^{n} = \SO(n+1)/\SO(n-1)\to {\mathbb S}^{n},\\[0.4pc]
 {\mathbb S}^{n-1} = {\rm O}(n)/{\rm O}(n-1)\to T_{r}{\mathbb R}{\mathbf P}^{n} = \SO(n+1)/S({\rm O}(1)\times {\rm O}(n-1))\to {\mathbb R}{\mathbf P}^{n},\\[0.4pc]
 {\mathbb S}^{2n-1} = \SU(n)/\SU(n-1)\to T_{r}{\mathbb C}{\mathbf P}^{n} = \SU(n+1)/S(\Un(1)\times\Un(n-1))\to {\mathbb C}{\mathbf P}^{n},\\[0.4pc]
 {\mathbb S}^{4n-1}\hspace{-0.1cm} = \hspace{-0.1cm}\Sp(n)\times\Sp(1)/\hspace{-0.1cm}\Sp(n-1)\times\Sp(1)\to T_{r}{\mathbb H}{\mathbf P}^{n}\hspace{-0.1cm} =\hspace{-0.1cm} \Sp(n+1)/\hspace{-0.1cm}\Sp(n-1)\times\Sp(1)\to {\mathbb H}{\mathbf P}^{n},\\[0.4pc]
 {\mathbb S}^{15} = \Spin(9)/\Spin(7)\to T_{r}{\mathbb C}a{\mathbf P}^{2}= {\rm F}_{4}/\Spin(7)\to {\mathbb C}a{\mathbf P}^{2}.
 \end{array}
 $$
 \end{proposition}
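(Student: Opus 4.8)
The plan is to specialise the general description of Proposition~\ref{phomo} to each row of Table~I and then to match the resulting fibre against the Montgomery--Samelson--Borel list in Table~II. By Proposition~\ref{phomo} we already have $T_r(G/K)=G/H$ with $H=\{k\in K:\operatorname{Ad}_kX=X\}$, together with the fibration $K/H\to G/H\to G/K$; since $G$ and the base $G/K$ are read off directly from Table~I, the whole statement reduces to identifying the isotropy group $H$ as a Lie subgroup and recognising $K/H$ as a round sphere. The essential input is that, by the rank-one hypothesis, the linear isotropy action of $K$ on the unit sphere $\mathcal S_{\mathfrak m}(1)$ is transitive, so $H$ is the stabiliser of the regular vector $X$; it is therefore the isotropy group of one of the transitive sphere actions of Table~II, and the $\operatorname{Ad}(H)$-decomposition $\mathfrak m=\mathfrak a\oplus\mathfrak m_\varepsilon\oplus\mathfrak m_{\varepsilon/2}$ computed in Cases~1--4 determines which entry occurs.

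First I would dispose of the three cases in which $K$ acts on $\mathcal S_{\mathfrak m}(1)$ by the untwisted isotropy representation. For $\mathbb S^n=\SO(n{+}1)/\SO(n)$ this representation is irreducible, i.e.\ entry~(1), so the point stabiliser is $H=\SO(n-1)$ and the fibre is $\SO(n)/\SO(n-1)=\mathbb S^{n-1}$. For $\mathbb H\mathbf P^n$, with $K=\Sp(1)\times\Sp(n)$, it is entry~(5), giving $H=\Sp(n-1)\times\Sp(1)$ and fibre $\mathbb S^{4n-1}$, while for $\mathbb Ca\mathbf P^2=\mathrm F_4/\Spin(9)$ it is entry~(7), giving $H=\Spin(7)$ and fibre $\mathbb S^{15}$. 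In each of these the Lie algebra $\mathfrak h$ obtained in Section~3 agrees with Table~I, the group $H$ is connected, and substitution into the fibration of Proposition~\ref{phomo} yields the corresponding three rows verbatim.

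The remaining work, and the main obstacle, lies in the two projective cases, where $K$ carries a determinant constraint and the isotropy representation is the \emph{twisted} action $w\mapsto\det(B)\,Bw$ on $\mathbb R^n$, respectively $\mathbb C^n$, rather than the standard one. Writing $k=\operatorname{diag}(\epsilon,B)\in S(\mathrm O(1)\times\mathrm O(n))$ with $\epsilon=\det B$, the equation $\det(B)\,Be_1=e_1$ forces $B=\operatorname{diag}(\epsilon,C)$ with $C\in\SO(n-1)$, so $H=\{\operatorname{diag}(\epsilon,\epsilon,C):\epsilon=\pm1,\ C\in\SO(n-1)\}\cong S(\mathrm O(1)\times\mathrm O(n-1))$; the analogous computation over $\mathbb C$ gives $H=\{\operatorname{diag}(\lambda,\lambda,C):\lambda\in\mathrm U(1),\,C\in\mathrm U(n-1),\,\lambda^2\det C=1\}\cong S(\mathrm U(1)\times\mathrm U(n-1))$. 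The delicate point is precisely to reconcile the identity component supplied by the $\mathfrak h$ of Section~3 with the correct component group of this \emph{twisted} stabiliser, and to check that the named subgroup is the one that genuinely occurs rather than the stabiliser of the untwisted action. For the fibre this difficulty is sidestepped by restricting to the determinant-one subgroup: the untwisted action of $\SO(n)\subset K$, respectively $\SU(n)\subset K$, is already transitive on $\mathcal S_{\mathfrak m}(r)$ with stabiliser $\SO(n-1)$, respectively $\SU(n-1)$, so that $K/H$ is presented as $\mathrm O(n)/\mathrm O(n-1)=\mathbb S^{n-1}$, respectively $\SU(n)/\SU(n-1)=\mathbb S^{2n-1}$. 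Feeding these into Proposition~\ref{phomo} completes the two projective rows, and with them the proposition.
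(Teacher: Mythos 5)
Your proposal is correct and rests on the same skeleton as the paper's proof: specialize Proposition \ref{phomo} to get $T_r(G/K)=G/H$ with $H=\{k\in K:\operatorname{Ad}_kX=X\}$, then identify $H$ and the fibre $K/H$ case by case against Table II. Where you diverge is in the two projective cases. The paper first reads off the identity component $H_0$ from the Lie-algebra data of Section 3 (for $\mathbb{C}\mathbf P^n$ via the central element $Z_1$ of (\ref{Z1}) and an explicit isomorphism onto $S(\Un(1)\times\Un(n-1))$), and then settles the component group indirectly: it observes that $K/H_0$ is already a sphere of the right dimension (so $H=H_0$ for $\mathbb{C}\mathbf P^n$), while for $\mathbb{R}\mathbf P^n$ it uses the disconnectedness of $K$ together with the fibration $K/H=\mathbb S^{n-1}$ and the isomorphism $\psi\colon{\rm O}(n)\to S({\rm O}(1)\times{\rm O}(n))$. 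You instead compute the stabiliser of the twisted action $w\mapsto\det(B)\,Bw$ directly as a matrix group, which yields $H$ \emph{with} its component structure in one stroke and is arguably cleaner; your explicit forms $\{\operatorname{diag}(\epsilon,\epsilon,C)\}$ and $\{\operatorname{diag}(\lambda,\lambda,C):\lambda^2\det C=1\}$ agree with (indeed refine) the paper's identifications, and your presentation of the fibre via the transitive subgroups $\SO(n)\subset K$, $\SU(n)\subset K$ matches the paper's conclusion. The only caveat is shared with the paper: the label $S({\rm O}(1)\times{\rm O}(n-1))$ for the group $\{\operatorname{diag}(\epsilon,\epsilon,C):\epsilon=\pm1,\ C\in\SO(n-1)\}$ should be read as naming a subgroup of $\SO(n+1)$ with identity component $\SO(n-1)$ and two components, rather than as an abstract isomorphism with ${\rm O}(n-1)$; your explicit description is the safer statement.
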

 \begin{proof} Applying Proposition \ref{phomo}, Table II and the cases studied in Section \ref{subrestricted}, it is explicitly obtained the subgroup $H$ in each compact rank-one symmetric space:

\noindent {\bf Case 1: ${\mathbb S}^{n}$ and ${\mathbb R}{\mathbf P}^{n}$} $(n\geq 2).$ For ${\mathbb S}^{n} = G/K = \SO(n+1)/\SO(n),$ using (\ref{hh}), the identity connected component $H_{0}$ of $H$ is $I_{2}\times \SO(n-1)\subset \SO(n+1).$ Then, $K/H_{0} = \SO(n)/\SO(n-1)$ is the $(n-1)$-sphere and hence, $H = H_{0}.$
 
 For ${\mathbb R}{\mathbf P}^{n} = G/K =\SO(n+1)/S({\rm O}(1)\times {\rm O}(n)),$ taking again $X$ as $A_{12},$ we also have $H_{0} = I_{2}\times\SO(n-1).$ Since $K$ has two connected components: $1\times SO(n)$ and $-1\times O^{-}(n),$ where $O^{-}(n)$ denotes the matrices in $O(n)$ with determinant $-1,$ $H$ is not connected. Consider the natural isomorphism $\psi\colon {\rm O}(n)\to S({\rm O}(1)\times{\rm O}(n)),$ $\psi(A) = ({\rm det}\,A)\times A\in \SO(n+1),$ $A\in {\rm O}(n).$ Then, from (\ref{fibration}), ${\mathbb S}^{n-1} = K/H = {\rm O}(n)/{\rm O}(n-1).$ Hence, $H = \psi({\rm O}(n-1)) = S({\rm O}(1)\times {\rm O}(n-1))\subset S({\rm O}(1)\times {\rm O}(n)).$
 
 \vspace{0.1cm}
 
 \noindent {\bf Case 2:} ${\mathbb C}{\mathbf P}^{n}$ $(n\geq 2).$ Here, $K = S(\Un(1)\times \Un(n))\subset \SU(n+1).$ The center ${\mathfrak z}({\mathfrak k})$ of the Lie algebra ${\mathfrak k}$ of $K$ is one-dimensional and 
 \[
 Z_{0} = {\rm diag}({\rm i}b_{0},{\rm i}(b_{0}-1),\dots ,{\rm i}(b_{0}-1)),\quad b_{0} = \frac{n}{n+1},
 \]
 is a generator.
 From (\ref{4.27}) it is obtained that $H_{0} = \{\exp tZ_{1}\colon t\in {\mathbb R}\}\cdot SU(n-1),$ where $Z_{1}$ is given in (\ref{Z1}). Hence the matrix
\[
 {\rm diag}({\rm i}b_{1},{\rm i}b_{1},{\rm i}(b_{1}-1),\dots ,{\rm i}(b_{1}-1)),\quad b_{1} = \frac{n-1}{n+1},
 \]
 is a generator of ${\mathfrak z}({\mathfrak h})$ and
 \[
 H_{0} = \{e^{i tb_{1}}I_{2}\times e^{i t(b_{1}-1)}A\in \SU(n+1)\colon  t\in {\mathbb R},\; A\in \SU(n-1)\}.
 \]
 Because $\widetilde{Z_{0}} := {\rm diag}(0,{\rm i}\widetilde{b_{0}},{\rm i}(\widetilde{b_{0}}-1),\dots , {\rm i}(\widetilde{b_{0}}-1)),$ with $\widetilde{b_{0}} = \frac{b_{1}}{b_{0}} = \frac{n-1}{n},$ belongs to the center of the Lie algebra of $1\times{\rm S}(\Un(1)\times \Un(n-1))\subset \SU(n+1),$ $H_{0}$ can be identified with this subgroup via the isomorphism $(\exp tZ_{1})A\mapsto (\exp t \widetilde{Z_{0}})A,$ $A\in \SU(n-1),$ or equivalently, with ${\rm S}(\Un(1)\times \Un(n-1))$ via the isomorphism
 \[
 e^{i tb_{1}}I_{2}\times e^{i t(b_{1}-1)}A \mapsto e^{it\widetilde{b_{0}}}\times 1 \times e^{it(\widetilde{b_{0}} -1)}A,\quad t\in {\mathbb R},\; A\in \SU(n-1).
 \]
Then $K/H_{0} = \SU(n)/\SU(n-1) = {\mathbb S}^{2n-1}$ and $H = H_{0}.$
 
 \vspace{0.1cm}
 
 \noindent {\bf Case 3:} ${\mathbb H}{\mathbf P}^{n}$  $(n\geq 1).$  According with Section \ref{subrestricted}, the centers of ${\mathfrak h}$ and of ${\mathfrak k}$ are isomorphic to ${\mathfrak s}{\mathfrak p}(1).$ They are given by 
\[ 
 {\mathfrak z}({\mathfrak h}) = {\mathbb R}\{{\rm i}t_{\mu} + {\rm i}t_{\widetilde{\alpha_{22}}}, U^{a}_{\mu} + U^{a}_{\widetilde{\alpha_{22}}}\colon a = 0,1\},\quad  {\mathfrak z}({\mathfrak k}) = {\mathbb R}\{{\rm i}t_{\mu}, U^{a}_{\mu}\colon a = 0,1\}.
 \]
 Hence, $H_{0} = \Sp(1)\cdot\Sp(n-1)$ and $K/H_{0} = \Sp(n)\times\Sp(1)/\Sp(n-1)\times \Sp(1) = {\mathbb S}^{4n-1}.$ This means that $H= H_{0}.$ 
 
 \vspace{0.1cm}
 
 \noindent {\bf Case 4:} ${\mathbb C}a{\mathbf P}^{2}.$ In Section \ref{subrestricted} it has been proven that ${\mathfrak h} = {\mathfrak s}{\mathfrak o}(7).$ Since $K = \Spin(9),$ $H_{0} = \Spin(7).$ Then, $K/H_{0} = {\mathbb S}^{15}$ and so, $H = H_{0}.$ 
 \end{proof}
 
 Let ${\mathbb K}$ be the field of real numbers, complex numbers or quaternions. On ${\mathbb K}^{n+1},$ $n\geq 0,$ consider its standard inner product. The {\em Stiefel manifold} $V_{2}({\mathbb K}^{n+1})$ of all orthonormal $2$- frames in ${\mathbb K}^{n+1}$ is a homogeneous manifold with quotient expression: $\SO(n+1)/\SO(n-1),$ $\SU(n+1)/\SU(n-1)$ or $\Sp(n+1)/\Sp(n-1)$ depending on whether ${\mathbb K} = {\mathbb R},$ ${\mathbb C}$ or ${\mathbb H},$ and the {\em projective Stiefel manifold} $W_{2}({\mathbb K}^{n+1})$ is the quotient space of the free $S$-action on $V_{2}({\mathbb K}^{n+1}),$ where $S$ is the subset of unit vectors of ${\mathbb K}$ $(S = {\mathbb S}^{0},$ ${\mathbb S}^{1}$ or ${\mathbb S}^{3}).$ 
 
 Clearly, the tangent sphere bundle $T_{r}{\mathbb S}^{n}$ of ${\mathbb S}^{n}\subset {\mathbb R}^{n+1}$ is naturally identified with $V_{2}({\mathbb R}^{n+1}),$ by considering each $u = u_{x}\in {T_{r}}_{x}{\mathbb S}^{n}$ as the pair $(x,\frac{1}{r}u)$ in ${\mathbb R}^{n+1},$ and $T_{r}{\mathbb R}{\mathbf P}^{n}$ with $W_{2}({\mathbb R}^{n+1}),$ by identification of $u$ with its opposite. As a direct consequence from Proposition \ref{quotients}, we have the following.
\begin{corollary} $T_{r}{\mathbb K}{\mathbf P}^{n} = W_{2}({\mathbb K}^{n+1}),$ for ${\mathbb K} = {\mathbb R},$ ${\mathbb C}$ or ${\mathbb H}.$ 
\end{corollary}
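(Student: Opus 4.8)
The plan is to extend the explicit geometric identification already made for $\mathbb{K}=\mathbb{R}$ to the complex and quaternionic cases, using the realization of projective space as a sphere quotient. For $\mathbb{K}=\mathbb{R},\mathbb{C},\mathbb{H}$, write $\mathbb{K}\mathbf{P}^{n}=S(\mathbb{K}^{n+1})/S$, where $S(\mathbb{K}^{n+1})$ is the unit sphere of $\mathbb{K}^{n+1}$ and $S$ is the unit sphere of $\mathbb{K}$ acting freely by right scalar multiplication $z\mapsto zs$. At a point $[z]$ the vertical space of this fibration is $z\cdot\mathrm{Im}\,\mathbb{K}$, and its orthogonal complement in $T_{z}S(\mathbb{K}^{n+1})$ is the $\mathbb{K}$-orthogonal complement $\{w\in\mathbb{K}^{n+1}\colon\langle w,z\rangle_{\mathbb{K}}=0\}$, which is canonically identified with $T_{[z]}\mathbb{K}\mathbf{P}^{n}$ (for $\mathbb{K}=\mathbb{R}$ the $S^{0}$-action is discrete, so this is just the covering identification $T_{[z]}\mathbb{R}\mathbf{P}^{n}\cong T_{z}S^{n}$).

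First I would describe $T_{r}\mathbb{K}\mathbf{P}^{n}$ in these terms: a radius-$r$ tangent vector at $[z]$ is a pair $(z,w)$ with $|z|=1$, $|w|=r$ and $\langle w,z\rangle_{\mathbb{K}}=0$, taken modulo the relation $(z,w)\sim(zs,ws)$ coming from the $S$-action (since right multiplication by $s$ is $\mathbb{K}$-linear, it preserves $\mathbb{K}$-orthogonality and carries horizontal vectors to horizontal vectors). Rescaling by $w\mapsto r^{-1}w$, the pair $(z,r^{-1}w)$ is precisely an orthonormal $2$-frame in $\mathbb{K}^{n+1}$; thus the set of such pairs before quotienting is $V_{2}(\mathbb{K}^{n+1})$, and the residual relation is exactly the diagonal action $(v_{1},v_{2})\sim(v_{1}s,v_{2}s)$ defining $W_{2}(\mathbb{K}^{n+1})$. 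This yields $T_{r}\mathbb{K}\mathbf{P}^{n}=W_{2}(\mathbb{K}^{n+1})$, uniformly in $\mathbb{K}$ and consistent with the $\mathbb{R}$-case treated above.

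Alternatively, since the statement is advertised as a direct consequence of Proposition \ref{quotients}, I would match homogeneous presentations. Using $V_{2}(\mathbb{K}^{n+1})=\SO(n+1)/\SO(n-1)$, $\SU(n+1)/\SU(n-1)$, $\Sp(n+1)/\Sp(n-1)$, one computes the isotropy of the projective frame $[(e_{1},e_{2})]$ for the left $G$-action as the enlarged subgroup $\{g\colon ge_{1}=e_{1}s,\ ge_{2}=e_{2}s,\ s\in S\}$. In the complex case this forces $g=\mathrm{diag}(\lambda,\lambda,B)$ with $\lambda\in\Un(1)$, $B\in\Un(n-1)$ and $\lambda^{2}\det B=1$, i.e.\ $S(\Un(1)\times\Un(n-1))$; in the quaternionic case it forces $g=\mathrm{diag}(q,q,B)$ with $q\in\Sp(1)$ acting as the scalar $qI_{2}$ on the first block and $B\in\Sp(n-1)$, i.e.\ $\Sp(1)\times\Sp(n-1)$. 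These are exactly the isotropy subgroups of Proposition \ref{quotients}, giving the same quotients.

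The main obstacle is the quaternionic case, where the noncommutativity of $\mathbb{H}$ means the scalar $S^{3}$-action and the left $\Sp(n+1)$-action no longer coincide on individual coordinates; the check to make is that right multiplication $z\mapsto zq$ agrees, on the orbit through $[(e_{1},e_{2})]$, with left multiplication by the scalar matrix $qI_{2}$, so that the enlarged isotropy is the diagonally (scalar-)embedded $\Sp(1)$ together with $\Sp(n-1)$ rather than something larger. Once this compatibility of the right scalar action with left multiplication by scalar matrices is recorded, both the geometric and the group-theoretic arguments close immediately.
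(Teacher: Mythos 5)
Your proposal is correct, and it actually contains two proofs. The second one is the paper's own route: the corollary is presented there as a direct consequence of Proposition \ref{quotients}, i.e.\ one matches the homogeneous presentations $T_{r}{\mathbb K}{\mathbf P}^{n}=G/H$ obtained in that proposition with homogeneous presentations of $W_{2}({\mathbb K}^{n+1})$ deduced from $V_{2}({\mathbb K}^{n+1})=G/G'$ and the free $S$-action. Your isotropy computation for the projective frame $[(e_{1},e_{2})]$ supplies the half of this comparison that the paper leaves implicit, and it checks out: in the complex case the subgroup $\{\mathrm{diag}(\lambda,\lambda,B)\colon\lambda^{2}\det B=1\}$ is precisely the group identified with $S(\Un(1)\times\Un(n-1))$ in Case 2 of the proof of Proposition \ref{quotients}, and in the quaternionic case the scalar-diagonal copy of $\Sp(1)$ together with $\Sp(n-1)$ matches the description of ${\mathfrak z}({\mathfrak h})\cong{\mathfrak s}{\mathfrak p}(1)$ in Case 3. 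Your first argument is genuinely different and more elementary: it bypasses the Lie-theoretic determination of $H$ altogether by lifting a radius-$r$ tangent vector of ${\mathbb K}{\mathbf P}^{n}=S({\mathbb K}^{n+1})/S$ to a horizontal pair $(z,w)$, rescaling to an orthonormal $2$-frame, and observing that the residual ambiguity is exactly the diagonal right $S$-action defining $W_{2}({\mathbb K}^{n+1})$; this extends uniformly the identification the paper itself makes only for ${\mathbb K}={\mathbb R}$. What the paper's route buys is coherence with the machinery already built (the subgroup $H$ is needed for everything that follows); what your geometric route buys is a self-contained, explicit diffeomorphism independent of the case-by-case analysis, at the cost of the one point you correctly flag, namely that in the quaternionic case the right scalar action and left multiplication by $qI_{2}$ agree on the standard frame, so the enlarged isotropy is no larger than the scalar-embedded $\Sp(1)\cdot\Sp(n-1)$.
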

\subsection{The set of all invariant metrics on $T_{r}(G/K)$} According with Proposition \ref{phomo}, the tangent sphere bundle $T_{r}(G/K)$ can be expressed as the homogeneous manifold $G/H.$ Let $\pi_{H}\colon G\to G/H$ be the natural projection and put $\overline{\mathfrak m} = {\mathfrak m}\oplus {\mathfrak k}^{+}.$ Then ${\mathfrak g} = \overline{\mathfrak m}\oplus {\mathfrak h}$ is a reductive decomposition associated to $G/H$ and $T_{o_{H}}G/H$, $o_{H}= \{H\}$ being the origin of $G/H,$ is identified with $\overline{\mathfrak m}$ via $(\pi_{H})_{*e}.$ As in Section \ref{subrestricted}, we choose the ${\rm Ad}(G)$-invariant inner product $\langle\cdot,\cdot\rangle$ on ${\mathfrak g}$ satisfying $\langle X,X\rangle = 1,$ where $X\in {\mathfrak a}$ and $\varepsilon_{\mathbb R}(X) = 1.$ Next, using the identification $\overline{\mathfrak m}\cong T_{o_{H}}(G/H),$ all the $G$-invariant metrics $\tilde{\mathbf g}$ on $T_{r}(G/K) = G/H$ are obtained.
\begin{proposition}\label{metricB} Any $G$-invariant metric on $T_{r}(G/K)$ is determined by an ${\rm Ad}(H)$-invariant inner product $\langle\cdot,\cdot\rangle^{(a;a_{\lambda};b_{\lambda})},$ $\lambda\in \Sigma^{+},$ on $\overline{\mathfrak m}$ of the form 
\begin{equation}\label{Bb}
\langle\cdot,\cdot\rangle^{(a,a_{\varepsilon},a_{\varepsilon/2},b_{\varepsilon},b_{\varepsilon/2})} = a^{2}\langle\cdot,\cdot\rangle_{\mathfrak a} + a_{\varepsilon}\langle\cdot,\cdot\rangle_{{\mathfrak m}_{\varepsilon}}  + a_{\varepsilon/2}\langle\cdot,\cdot\rangle_{{\mathfrak m}_{\varepsilon/2}} + b_{\varepsilon}\langle\cdot,\cdot\rangle_{{\mathfrak k}_{\varepsilon}} + b_{\varepsilon/2}\langle\cdot,\cdot\rangle_{{\mathfrak k}_{\varepsilon/2}},
\end{equation}
where $a,a_{\varepsilon},  a_{\varepsilon/2},b_{\varepsilon},$ and $b_{\varepsilon/2}$ are positive constants.
\end{proposition}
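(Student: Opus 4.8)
The plan is to use the standard dictionary between $G$-invariant metrics and invariant inner products on the isotropy module. By Proposition~\ref{phomo} we may write $T_{r}(G/K)=G/H$ with reductive decomposition ${\mathfrak g}=\overline{\mathfrak m}\oplus{\mathfrak h}$, and $(\pi_{H})_{*e}$ identifies $\overline{\mathfrak m}=({\mathfrak a}\oplus{\mathfrak m}_{\varepsilon}\oplus{\mathfrak m}_{\varepsilon/2})\oplus({\mathfrak k}_{\varepsilon}\oplus{\mathfrak k}_{\varepsilon/2})$ with $T_{o_{H}}(G/H)$. As recalled in Section~2, a $G$-invariant metric $\tilde{\mathbf g}$ is completely determined by its value $\tilde{\mathbf g}_{o_{H}}$, which may be any ${\rm Ad}(H)$-invariant inner product on $\overline{\mathfrak m}$, and conversely every such inner product extends to a unique $G$-invariant metric. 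Thus the whole problem reduces to describing the space of ${\rm Ad}(H)$-invariant inner products on $\overline{\mathfrak m}$.

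First I would record that the five subspaces are mutually $\langle\cdot,\cdot\rangle$-orthogonal, which is immediate from the orthogonality of the decompositions~(\ref{eq.ms4.2}) together with ${\mathfrak m}\perp{\mathfrak k}$. Next, each summand is ${\rm Ad}(H)$-invariant: for the identity component this follows from Lemma~\ref{lbrack1}, since $[{\mathfrak h},{\mathfrak m}_{\lambda}]\subset{\mathfrak m}_{\lambda}$, $[{\mathfrak h},{\mathfrak k}_{\lambda}]\subset{\mathfrak k}_{\lambda}$ and $[{\mathfrak h},{\mathfrak a}]=0$ (so ${\mathfrak a}$ is a trivial submodule), while in the disconnected case $T_{r}{\mathbb R}{\mathbf P}^{n}$ one verifies invariance on the extra component directly from the explicit generator of $H=S({\rm O}(1)\times{\rm O}(n-1))$ produced in Proposition~\ref{quotients}. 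The core of this step is then to prove that each of ${\mathfrak a}$, ${\mathfrak m}_{\varepsilon}$, ${\mathfrak m}_{\varepsilon/2}$, ${\mathfrak k}_{\varepsilon}$, ${\mathfrak k}_{\varepsilon/2}$ is ${\rm Ad}(H)$-irreducible. I would carry this out case by case, reading off from the explicit descriptions in Section~\ref{subrestricted} that, apart from the trivial line ${\mathfrak a}$, each module is a defining (standard) representation of the appropriate semisimple or reductive factor of ${\mathfrak h}$ listed in Table~I, and is therefore irreducible.

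Granting irreducibility, Schur's lemma forces an ${\rm Ad}(H)$-invariant inner product to restrict to a positive multiple of $\langle\cdot,\cdot\rangle$ on each summand, which produces exactly the positive constants $a^{2},a_{\varepsilon},a_{\varepsilon/2},b_{\varepsilon},b_{\varepsilon/2}$ of~(\ref{Bb}). The delicate point, and what I expect to be the main obstacle, is to exclude off-diagonal terms between summands that are ${\rm Ad}(H)$-equivalent. For summands attached to distinct $\lambda$ no mixing can occur, since their dimensions and the factor of ${\mathfrak h}$ acting nontrivially differ; the genuinely subtle comparison is between ${\mathfrak m}_{\lambda}$ and ${\mathfrak k}_{\lambda}$, which are linked by the correspondence $\xi_{\lambda}\mapsto\zeta_{\lambda}$ of~(\ref{eq.ms4.3}). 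Because ${\mathfrak h}$ centralizes ${\mathfrak a}$, this correspondence commutes with ${\rm ad}_{\mathfrak h}$, so the two spaces are at least ${\rm Ad}(H_{0})$-isomorphic, and the heart of the argument is to analyse the full ${\rm Ad}(H)$-action on ${\mathfrak m}_{\lambda}\oplus{\mathfrak k}_{\lambda}$ — in particular the contribution of the non-identity components of $H$ and the finer structure distinguishing the two modules — in order to decide the fate of an invariant cross term. Once this analysis is settled in favour of orthogonality, the five blocks are forced to be mutually orthogonal and the inner product must take precisely the shape~(\ref{Bb}), which completes the proof.
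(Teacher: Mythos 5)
Up to the invocation of Schur's lemma your route coincides with the paper's: reduce to ${\rm Ad}(H)$-invariant inner products on $\overline{\mathfrak m}$, check that the five summands are ${\rm Ad}(H)$-invariant and irreducible, and conclude that the restriction to each summand is a positive multiple of $\langle\cdot,\cdot\rangle$. The paper obtains invariance of the summands a little more efficiently from ${\rm Ad}_{h}X=X$, hence ${\rm ad}^{2}_{X}\circ{\rm Ad}_{h}={\rm Ad}_{h}\circ{\rm ad}^{2}_{X}$, which covers the disconnected $H$ of $T_{r}{\mathbb R}{\mathbf P}^{n}$ without a separate check, and it deduces irreducibility of ${\mathfrak k}_{\varepsilon},{\mathfrak k}_{\varepsilon/2}$ from the isotropy representations of the spheres $K/H$ in Table II before transferring it to ${\mathfrak m}_{\varepsilon},{\mathfrak m}_{\varepsilon/2}$ via (\ref{eq.ms4.3}); but these are differences of detail, not of substance.

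The genuine gap is the step you yourself call the heart of the argument: the exclusion of an invariant cross term between ${\mathfrak m}_{\lambda}$ and ${\mathfrak k}_{\lambda}$ is never carried out --- you write only ``once this analysis is settled in favour of orthogonality''. That analysis cannot be settled in favour of orthogonality. For $T_{r}{\mathbb S}^{n}=\SO(n+1)/\SO(n-1)$, in the notation of Case 1 one has ${\mathfrak m}_{\varepsilon}={\mathbb R}\{A_{1k}\}_{k\geq 3}$, ${\mathfrak k}_{\varepsilon}={\mathbb R}\{A_{2k}\}_{k\geq 3}$, and ${\rm Ad}_{h}$ for $h=I_{2}\times B$ acts on both spans by the same matrix $B$; hence $A_{1k}\mapsto A_{2k}$ is an ${\rm Ad}(H)$-equivariant isomorphism, $\beta(A_{1k},A_{2l})=\delta_{kl}$ extends to an ${\rm Ad}(H)$-invariant symmetric pairing, and $\langle\cdot,\cdot\rangle^{(a;a_{\varepsilon};b_{\varepsilon})}+s\,\beta$ is, for small $s\neq 0$, an ${\rm Ad}(H)$-invariant inner product that is \emph{not} of the form (\ref{Bb}). (These are precisely the invariant metrics with horizontal--vertical mixing, e.g.\ the non-Sasaki $g$-natural metrics of \cite{AC}.) For ${\mathbb C}{\mathbf P}^{n}$ the situation is looser still: ${\mathfrak a}$, ${\mathfrak m}_{\varepsilon}$ and ${\mathfrak k}_{\varepsilon}$ are all one-dimensional and, $H$ being connected, are trivial $H$-modules, so \emph{every} inner product on their sum is invariant. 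Thus irreducibility of the summands only identifies the diagonal invariant inner products with the family (\ref{Bb}); it does not show that every invariant inner product is diagonal. You should know that the paper's own proof also stops at irreducibility and is silent on this point, so you have in fact put your finger on a real weakness of the statement rather than omitted a routine verification; but as a proof of the proposition as written your proposal is incomplete at exactly the decisive step, and that step is not fillable by the representation-theoretic analysis you envisage.
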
 
\begin{proof}
 By definition of $H,$ the vector $X\in {\mathfrak a}$ is ${\rm Ad}(H)$-invariant and we get ${\rm ad}^{2}_{X}\circ {\rm Ad}_{h} = {\rm Ad}_{h}\circ {\rm ad}^{2}_{X},$ for each $h\in H.$ Hence the subspaces ${\mathfrak m}_{\varepsilon}, {\mathfrak m}_{\varepsilon/2}, {\mathfrak k}_{\varepsilon}$ and ${\mathfrak k}_{\varepsilon/2}$ are all ${\rm Ad}(H)$-invariant. Now, using the isotropy representation of spheres in Table II, it follows from Proposition \ref{quotients} that the subspaces ${\mathfrak k}_{\varepsilon}$ and ${\mathfrak k}_{\varepsilon/2}$ of ${\mathfrak k}$ are ${\rm Ad}(H)$-irreducible. Hence, applying (\ref{eq.ms4.3}), the representation ${\rm Ad}(H)$ of $H$ is also irreducible on ${\mathfrak m}_{\varepsilon}$ and ${\mathfrak m}_{\varepsilon/2}.$ This proves the result.
 \end{proof}
 
In what follows we denote by $\tilde{\mathbf g}^{(a;a_{\lambda};b_{\lambda})}$ the $G$-invariant metric $\tilde{\mathbf g}$ in (\ref{Bb}). If $\varepsilon/2$ is not in $\Sigma,$ the numbers $a_{\varepsilon/2}$ and $b_{\varepsilon/2}$ can take any value and we will simply write $\tilde{\mathbf g} = \tilde{\mathbf g}^{(a,a_{\varepsilon},b_{\varepsilon})}.$ 

Let ${\mathfrak U}\colon \overline{\mathfrak m}\times \overline{\mathfrak m}\to \overline{\mathfrak m}$ be the symmetric bilinear function defined in (\ref{U}) for the Euclidean space $(\overline{\mathfrak m},\langle\cdot,\cdot\rangle^{(a;a_{\lambda};b_{\lambda})}).$ The following lemma is obtained using (\ref{eq.ms4.3}), (\ref{brack1}) and (\ref{Bb}).
\begin{lemma}\label{lU} We have:
$$
\begin{array}{l}
{\mathfrak U}({\mathfrak a},{\mathfrak a}) = {\mathfrak U}({\mathfrak m}_{\varepsilon},{\mathfrak m}_{\varepsilon}) = {\mathfrak U}({\mathfrak k}_{\varepsilon},{\mathfrak k}_{\varepsilon}) = 0,\\[0.4pc]
{\mathfrak U}(X,\xi^{j}_{\varepsilon}) = \frac{a^{2}-a_{\varepsilon}}{2b_{\varepsilon}}\zeta^{j}_{\varepsilon},\quad {\mathfrak U}(X,\zeta^{j}_{\varepsilon}) = \frac{b_{\varepsilon}-a^{2}}{2a_{\varepsilon}}\xi^{j}_{\varepsilon},\quad
{\mathfrak U}(X,\xi^{p}_{\varepsilon/2}) = \frac{a^{2}-a_{\varepsilon/2}}{4b_{\varepsilon/2}}\zeta^{p}_{\varepsilon/2},\\[0.4pc] 
{\mathfrak U}(X,\zeta^{p}_{\varepsilon/2}) = \frac{b_{\varepsilon/2}-a^{2}}{4a_{\varepsilon/2}}\xi^{p}_{\varepsilon/2}, \quad {\mathfrak U}(\xi^{j}_{\varepsilon},\zeta^{k}_{\varepsilon}) = \frac{a_{\varepsilon}-b_{\varepsilon}}{2a}\delta_{jk}X,\quad {\mathfrak U}(\xi^{j}_{\varepsilon},\xi^{p}_{\varepsilon/2}) = \frac{a_{\varepsilon/2}-a_{\varepsilon}}{2b_{\varepsilon/2}}[\xi^{j}_{\varepsilon},\xi^{p}_{\varepsilon/2}],\\[0.4pc]
{\mathfrak U}(\xi^{j}_{\varepsilon},\zeta^{p}_{\varepsilon/2}) = \frac{b_{\varepsilon/2}-a_{\varepsilon}}{2a_{\varepsilon/2}}[\xi^{j}_{\varepsilon},\zeta^{p}_{\varepsilon/2}],\quad {\mathfrak U}(\xi^{p}_{\varepsilon/2},\zeta^{j}_{\varepsilon}) = \frac{b_{\varepsilon}-a_{\varepsilon/2}}{2a_{\varepsilon/2}}[\xi^{p}_{\varepsilon/2},\zeta^{j}_{\varepsilon}],\\[0.4pc]
{\mathfrak U}(\zeta^{j}_{\varepsilon},\zeta^{p}_{\varepsilon/2}) = \frac{b_{\varepsilon/2}-b_{\varepsilon}}{2b_{\varepsilon/2}}[\zeta^{j}_{\varepsilon},\zeta^{p}_{\varepsilon/2}],\quad {\mathfrak U}(\xi^{p}_{\varepsilon/2},\zeta^{q}_{\varepsilon/2}) = \frac{a_{\varepsilon/2}-b_{\varepsilon/2}}{2}\Big(\frac{\delta_{pq}}{2a}X - \frac{1}{a_{\varepsilon}}[\xi^{p}_{\varepsilon/2},\zeta^{q}_{\varepsilon/2}]_{{\mathfrak m}_{\varepsilon}}\Big),
\end{array}
$$
for all $j,k = 1,\dots, m_{\varepsilon}$ and $p,q= 1,\dots ,m_{\varepsilon/2}.$
\end{lemma}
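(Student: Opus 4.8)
The function $\mathfrak U$ is symmetric and bilinear, so it is determined by its values on pairs drawn from the adapted orthonormal basis $\{X,\xi^{j}_{\varepsilon},\xi^{p}_{\varepsilon/2},\zeta^{j}_{\varepsilon},\zeta^{p}_{\varepsilon/2}\}$ of $\overline{\mathfrak m}=\mathfrak a\oplus\mathfrak m_{\varepsilon}\oplus\mathfrak m_{\varepsilon/2}\oplus\mathfrak k_{\varepsilon}\oplus\mathfrak k_{\varepsilon/2}$. The plan is to evaluate $\mathfrak U(\xi_{1},\xi_{2})$ one summand at a time: since the five summands are $\langle\cdot,\cdot\rangle$-orthogonal by (\ref{eq.ms4.2}) and hence also $\langle\cdot,\cdot\rangle^{(a;a_{\lambda};b_{\lambda})}$-orthogonal, the $\mathcal V$-component of $\mathfrak U(\xi_{1},\xi_{2})$ is recovered by pairing with the basis vectors of $\mathcal V$ through the defining identity (\ref{U}), now read on $(\overline{\mathfrak m},\langle\cdot,\cdot\rangle^{(a;a_{\lambda};b_{\lambda})})$ with the $\overline{\mathfrak m}$-projection $[\cdot,\cdot]_{\overline{\mathfrak m}}$ in place of $[\cdot,\cdot]_{\mathfrak m}$. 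Thus every entry reduces to computing the two numbers $\langle[\xi_{3},\xi_{1}]_{\overline{\mathfrak m}},\xi_{2}\rangle^{(a;a_{\lambda};b_{\lambda})}$ and $\langle[\xi_{3},\xi_{2}]_{\overline{\mathfrak m}},\xi_{1}\rangle^{(a;a_{\lambda};b_{\lambda})}$ for $\xi_{3}$ ranging over the basis.

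First I would use the bracket relations (\ref{brack1}) to locate the support of each entry, before computing any coefficient. The $\mathcal V$-component of $\mathfrak U(\xi_{1},\xi_{2})$ can be nonzero only if $[\mathcal V,\xi_{1}]_{\overline{\mathfrak m}}$ (or $[\mathcal V,\xi_{2}]_{\overline{\mathfrak m}}$) meets the summand carrying $\xi_{2}$ (resp. $\xi_{1}$). Reading this off (\ref{brack1}) is immediate and already yields the three vanishing statements: for both arguments in $\mathfrak a$ there is nothing to bracket, while $[\mathfrak m_{\varepsilon},\mathfrak m_{\varepsilon}]\subset\mathfrak h$ and $[\mathfrak k_{\varepsilon},\mathfrak k_{\varepsilon}]\subset\mathfrak h$ are orthogonal to $\overline{\mathfrak m}$, so no bracket returns to $\mathfrak m_{\varepsilon}$ or $\mathfrak k_{\varepsilon}$ from within $\overline{\mathfrak m}$; hence $\mathfrak U(\mathfrak a,\mathfrak a)=\mathfrak U(\mathfrak m_{\varepsilon},\mathfrak m_{\varepsilon})=\mathfrak U(\mathfrak k_{\varepsilon},\mathfrak k_{\varepsilon})=0$. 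The same table pins down, for every remaining pair, the single target summand (or, in the $\varepsilon/2$ cases, the at most two target summands) in which the output can lie.

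Next I would compute the surviving coefficients by substituting the structure constants. For $u=X$ the brackets of $X$ with the $\mathfrak m_{\lambda}$- and $\mathfrak k_{\lambda}$-vectors are governed by (\ref{eq.ms4.3}) with $\varepsilon_{\mathbb R}(X)=1$ and $(\varepsilon/2)_{\mathbb R}(X)=\frac{1}{2}$, giving the factors $1$ and $\frac{1}{2}$ that distinguish the $\varepsilon$- and $\varepsilon/2$-rows; the $\varepsilon$--$\varepsilon/2$ cross brackets are handled by (\ref{S2}). Brackets paired against a basis vector are transferred using the $\mathrm{Ad}(G)$-invariance of the base product $\langle\cdot,\cdot\rangle$ (the scaled product $\langle\cdot,\cdot\rangle^{(a;a_{\lambda};b_{\lambda})}$ is not invariant), after which the summand-dependent weights $a^{2},a_{\varepsilon},a_{\varepsilon/2},b_{\varepsilon},b_{\varepsilon/2}$ of (\ref{Bb}) are restored when reading off the component. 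It is precisely the mismatch between the weight on the summand of $[\xi_{3},\xi_{i}]_{\overline{\mathfrak m}}$ and the weight used to normalise the output that produces the displayed differences $a^{2}-a_{\varepsilon}$, $b_{\varepsilon}-a^{2}$, $a_{\varepsilon/2}-a_{\varepsilon}$, etc., divided by the appropriate weight.

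The main obstacle is the bookkeeping for the genuinely two-term entries, $\mathfrak U(\xi^{j}_{\varepsilon},\xi^{p}_{\varepsilon/2})$, $\mathfrak U(\zeta^{j}_{\varepsilon},\zeta^{p}_{\varepsilon/2})$ and above all $\mathfrak U(\xi^{p}_{\varepsilon/2},\zeta^{q}_{\varepsilon/2})$. Here $[\mathfrak m_{\varepsilon/2},\mathfrak k_{\varepsilon/2}]\subset\mathfrak a\oplus\mathfrak m_{\varepsilon}$ splits the output across two summands carrying the distinct weights $a^{2}$ and $a_{\varepsilon}$, so the result has both an $X$-part and an $\mathfrak m_{\varepsilon}$-part with different denominators, and one must retain the projection $[\xi^{p}_{\varepsilon/2},\zeta^{q}_{\varepsilon/2}]_{\mathfrak m_{\varepsilon}}$ explicitly rather than absorbing it. Keeping the signs from (\ref{S2}) and the two scalings straight in these cases is the delicate point; once it is done, all other entries follow from the routine substitutions described above.
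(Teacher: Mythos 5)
Your proposal is correct and takes essentially the same route as the paper, which obtains the lemma by exactly this computation: evaluating the defining identity (\ref{U}) on $(\overline{\mathfrak m},\langle\cdot,\cdot\rangle^{(a;a_{\lambda};b_{\lambda})})$, using the bracket table (\ref{brack1}) to locate the possible target summands, and the relations (\ref{eq.ms4.3}) together with the weights in (\ref{Bb}) and the ${\rm Ad}(G)$-invariance of the base form for the coefficients. The only cosmetic difference is your appeal to (\ref{S2}), which is not actually needed since the $\varepsilon$--$\varepsilon/2$ entries are left expressed in terms of the brackets themselves rather than evaluated.
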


Since $X$ is ${\rm Ad}(H)$-invariant, it determines a $G$-invariant vector field on $T_{r}(G/K)$ that we also denote by $X.$ As a direct consequence from this lemma, the following is proved.
\begin{proposition}\label{pproperties} We have:
\begin{enumerate}
\item[{\rm (i)}] $X$ on $(T_{r}(G/K),\tilde{\mathbf g})$ is a Killing vector field if and only if $a_{\lambda} = b_{\lambda},$ for all $\lambda\in \Sigma^{+}.$
\item[{\rm (ii)}] $(T_{r}(G/K) =G/H,\tilde{\mathbf g})$ is naturally reductive {\rm (}with associated reductive decomposition ${\mathfrak g} = \overline{\mathfrak m}\oplus {\mathfrak h})$ if and only if $\tilde{\mathbf g}$ is proportional to $\langle\cdot,\cdot\rangle_{\overline{\mathfrak m}}.$
\item[{\rm (iii)}] The projection $\pi^{T}\colon (T_{r}(G/K),\tilde{\mathbf g})\to (G/K,g)$ is a Riemannian submersion if and only if $a = a_{\varepsilon} = a_{\varepsilon/2} = 1.$
\end{enumerate}
\end{proposition}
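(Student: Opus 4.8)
The plan is to read off all three statements directly from Lemma \ref{lU}, which tabulates the symmetric bilinear map $\mathfrak U$ of the metric (\ref{Bb}) on every pair of the five mutually orthogonal summands of $\overline{\mathfrak m} = \mathfrak a \oplus \mathfrak m_{\varepsilon} \oplus \mathfrak m_{\varepsilon/2} \oplus \mathfrak k_{\varepsilon} \oplus \mathfrak k_{\varepsilon/2}$. No new computation is needed beyond combining that lemma with the Killing criterion and the naturally-reductive criterion recorded in Section 2.2.

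For part (i), I would invoke the criterion from the homogeneous-manifolds subsection: the $G$-invariant field with value $X\in{\mathfrak a}$ at $o_H$ is Killing if and only if $\langle{\mathfrak U}(\cdot,\cdot),X\rangle=0$. Since $\mathfrak a$ is $\langle\cdot,\cdot\rangle^{(a;a_{\lambda};b_{\lambda})}$-orthogonal to the other four summands, this says exactly that the $\mathfrak a$-component of ${\mathfrak U}(Y,Z)$ must vanish for all $Y,Z\in\overline{\mathfrak m}$. Scanning Lemma \ref{lU}, the only entries with a nonzero $\mathfrak a$-component are ${\mathfrak U}(\xi^{j}_{\varepsilon},\zeta^{k}_{\varepsilon})=\tfrac{a_{\varepsilon}-b_{\varepsilon}}{2a}\delta_{jk}X$ and the $X$-part $\tfrac{a_{\varepsilon/2}-b_{\varepsilon/2}}{4a}\delta_{pq}X$ of ${\mathfrak U}(\xi^{p}_{\varepsilon/2},\zeta^{q}_{\varepsilon/2})$; these vanish precisely when $a_{\varepsilon}=b_{\varepsilon}$ and $a_{\varepsilon/2}=b_{\varepsilon/2}$, i.e.\ $a_{\lambda}=b_{\lambda}$ for all $\lambda\in\Sigma^{+}$.

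For part (ii), natural reductivity of $(G/H,\tilde{\mathbf g})$ relative to ${\mathfrak g}=\overline{\mathfrak m}\oplus{\mathfrak h}$ means ${\mathfrak U}\equiv 0$ by the definition following (\ref{U}). Setting the four radial entries ${\mathfrak U}(X,\xi^{j}_{\varepsilon})$, ${\mathfrak U}(X,\zeta^{j}_{\varepsilon})$, ${\mathfrak U}(X,\xi^{p}_{\varepsilon/2})$, ${\mathfrak U}(X,\zeta^{p}_{\varepsilon/2})$ of Lemma \ref{lU} to zero already forces $a_{\varepsilon}=b_{\varepsilon}=a_{\varepsilon/2}=b_{\varepsilon/2}=a^{2}$, using $m_{\varepsilon}\geq 1$ and, when $\varepsilon/2\in\Sigma$, also $m_{\varepsilon/2}\geq 1$. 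I would then observe that every remaining coefficient in Lemma \ref{lU} is of the form $a_{\mu}-a_{\nu}$, $a_{\mu}-b_{\mu}$, or $b_{\mu}-b_{\nu}$, so these all vanish as well; conversely, when the five constants coincide, all such differences are zero and ${\mathfrak U}\equiv 0$. The equality $a^{2}=a_{\varepsilon}=a_{\varepsilon/2}=b_{\varepsilon}=b_{\varepsilon/2}$ is exactly proportionality of $\tilde{\mathbf g}$ to $\langle\cdot,\cdot\rangle_{\overline{\mathfrak m}}$.

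For part (iii), since $\pi^{T}$ is $G$-equivariant and both metrics are $G$-invariant, it suffices to test the submersion property at $o_H$. The differential $(\pi^{T})_{*o_H}\colon\overline{\mathfrak m}\to{\mathfrak m}$ is the projection $\xi_{\mathfrak m}+\xi_{{\mathfrak k}^{+}}\mapsto\xi_{\mathfrak m}$ with kernel ${\mathfrak k}^{+}$; because the five summands are $\tilde{\mathbf g}$-orthogonal, the horizontal space is ${\mathfrak m}={\mathfrak a}\oplus{\mathfrak m}_{\varepsilon}\oplus{\mathfrak m}_{\varepsilon/2}$ and $(\pi^{T})_{*o_H}$ restricts there to the identity of ${\mathfrak m}=T_{o}(G/K)$. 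Hence $\pi^{T}$ is a Riemannian submersion if and only if $\tilde{\mathbf g}|_{\mathfrak m}=g_{o}=\langle\cdot,\cdot\rangle|_{\mathfrak m}$, that is $a^{2}=a_{\varepsilon}=a_{\varepsilon/2}=1$; as $a>0$ this reads $a=a_{\varepsilon}=a_{\varepsilon/2}=1$, with $b_{\varepsilon},b_{\varepsilon/2}$ unconstrained. I expect the only real point of care is the bookkeeping in (ii)—confirming that the radial conditions collapse all the coefficients, so that the off-diagonal entries of ${\mathfrak U}$ are verified to vanish and the converse implication is genuine.
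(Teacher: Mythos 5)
Your proposal is correct and follows exactly the route the paper intends: the paper states Proposition \ref{pproperties} as a direct consequence of Lemma \ref{lU} together with the Killing criterion $\langle{\mathfrak U}(\cdot,\cdot),\xi\rangle=0$ and the definition of natural reductivity from Section 2.2, and you have simply written out the bookkeeping (correctly, including the observation that only the two entries ${\mathfrak U}(\xi^{j}_{\varepsilon},\zeta^{k}_{\varepsilon})$ and ${\mathfrak U}(\xi^{p}_{\varepsilon/2},\zeta^{q}_{\varepsilon/2})$ carry an ${\mathfrak a}$-component, and that the radial entries in (ii) already force all five constants to coincide). No discrepancy with the paper's argument.
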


 \subsection{The standard almost contact metric structure} An affine connection $\nabla$ on ar\-bi\-trary smooth manifold $M$ defines a distribution ${\mathcal H}\colon u\in TM\to {\mathcal H}_{u}\subset T_{u}TM,$ called the {\em horizontal distribution}, where ${\mathcal H}_{u}$ is the space of all horizontal lifts of tangent vectors in $T_{p}M,$ $p = \pi^{T}(u),$ obtained by parallel translation with respect to $\nabla.$ (See, for example \cite[Ch. 9]{Bl} and references inside for more information). Then $\nabla$ induces the direct decomposition $T_{u}TM = {\mathcal H}_{u}\oplus {\mathcal V}_{u},$ where ${\mathcal V}_{u}$ is the vertical space ${\mathcal V}_{u} = {\rm Ker}(\pi^{T})_{*u}.$ The standard almost complex structure $J^{\nabla}$ on $TM$ induced by $\nabla,$ is defined by
\begin{equation}\label{J}
J^{\nabla}X^{\tt h} = X^{\tt v},\quad J^{\nabla} X^{\tt v} = -X^{\tt h},\quad X\in {\mathfrak X}(M),
\end{equation}
where $X^{\tt h}$ and $X^{\tt v}$ are the horizontal and vertical lift of $X,$ respectively. Given a Riemannian metric $g$ on $M,$ the Sasaki metric $g^{S}$ with respect to the pair $(g,\nabla)$ is given by
\begin{equation}\label{Sasaki}
g^{S}(\eta_{1},\eta_{2}) = (g((\pi^{T})_{*}\eta_{1},(\pi^{T})_{*}\eta_{2}) + g(K(\eta_{1}),K(\eta_{2})))\circ \pi^{T},\quad \eta_{1},\eta_{2}\in TT(M),
\end{equation}
where $K$ is the connection map of $\nabla.$ Then $g^{S}(X^{\tt h},Y^{\tt h}) = g^{S}(X^{\tt v},Y^{\tt v}) = g(X,Y)\circ \pi^{T}$ and $g^{S}(X^{\tt v},Y^{\tt h}) = 0,$ for all $X,Y\in {\mathfrak X}(M).$ This implies that $g^{S}$ is a Hermitian metric with respect to $J^{\nabla}.$ If $\nabla$ is the Levi-Civita connection of $(M,g),$ the pair $(J,g^{S}),$ where $J = J^{\nabla},$ known as the {\em standard almost Hermitian structure}, is almost K\"ahler.
 
 Now suppose that $M$ is a compact rank-one symmetric space $G/K,$ $g$ is the $G$-invariant Riemannian metric determined at the origin by the inner product $\langle\cdot,\cdot\rangle$ on ${\mathfrak m}$ and $\nabla$ is its Levi-Civita connection. Then $(J,g^{S})$ is defined on $G/H\times{\mathbb R}^{+}$ by its identification with $D^{+}(G/K)$ via $\phi\circ f^{+},$ i.e. $J = (\phi\circ f^{+})^{-1}_{*}J(\phi\circ f^{+})_{*}$ and $g^{S} = (\phi\circ f^{+})^{*}g^{S}.$ Moreover, the tangent space $T_{(o_{H},t)}(G/H\times {\mathbb R}^{+})$ is taken as $\overline{\mathfrak m}\times T_{t}{\mathbb R},$ via $(\pi_{H}\times {\rm id})_{*(e,t)},$ for each $t\in {\mathbb R}^{+},$ and the vectors
\begin{equation}\label{basis}
\{(X,0),(0,\frac{\partial}{\partial t}),(\xi^{j}_{\varepsilon},0),(\xi^{p}_{\varepsilon/2},0),(\zeta^{j}_{\varepsilon},0),(\zeta^{p}_{\varepsilon/2},0);\; j = 1,\dots,m_{\varepsilon},\; p = 1,\dots,m_{\varepsilon/2}\}
\end{equation}
 is a basis of $T_{(o_{H},t)}(G/H\times {\mathbb R}^{+}).$ From (\ref{f+}), using (\ref{kk}), we have
\[
(f^{+})_{*(o_{H},t)}(\xi,\frac{\partial}{\partial t}) = \pi_{*(e,tX)}(\xi,X_{tX}) = \pi_{*(e,tX)}(\xi_{\mathfrak m}, X_{tX} + t[\xi_{\mathfrak k},X]_{tX}),
\]
for all $\xi\in \overline{\mathfrak m}.$ Then from (\ref{eq.ms4.3}), for all $s=1,\dots, m_{\lambda},$ $\lambda\in \Sigma^{+},$ we get
 \begin{equation}\label{f++}
\begin{array}{l}
(f^{+})_{*(o_{H},t)}(X,0)  =  \pi_{*(e,tX)}(X, 0),\;\;(f^{+})_{*(o_{H},t)}(0, \frac{\partial}{\partial t})  = \pi_{*(e,tX)}(0,X_{tX}),\\[0.4pc]
(f^{+})_{*(o_{H},t)}(\xi^{s}_{\lambda},0)  =  \pi_{*(e,tX)}(\xi^{s}_{\lambda}, 0),\;\; (f^{+})_{*(o_{H},t)}(\zeta^{s}_{\lambda},0)  =  \pi_{*(e,tX)}(0,-\lambda_{\mathbb R}(t)(\xi^{s}_{\lambda})_{tX}),
\end{array}
\end{equation}
where $\lambda_{\mathbb R}$ is taken as the mapping $\lambda_{\mathbb R}\colon {\mathbb R}^{+}\to {\mathbb R}^{+},$ $\lambda_{\mathbb R}(t) = \lambda_{\mathbb R}(tX).$ (Then, $\varepsilon_{\mathbb R} = {\rm id}_{\mid{\mathbb R}^{+}}.)$
\begin{proposition}\label{estandard} The standard almost Hermitian structure $(J,g^{S})$ on $G/H\times {\mathbb R}^{+}$ is $G$-invariant and it is determined at $(o_{H},t),$ $t\in {\mathbb R}^{+},$ by 
$$
\begin{array}{l}
J_{(o_{H},t)}(X,0)  =  (0, \frac{\partial}{\partial t}), \qquad J_{(o_{H},t)}(0,\frac{\partial}{\partial t}) = (-X, 0),\\[0.4pc]
J_{(o_{H},t)}(\xi^{s}_{\lambda},0) =  (-\frac{1}{\lambda_{\mathbb R}(t)}\zeta_{\lambda}^{s},0), \quad  J_{(o_{H},t)}(\zeta^{s}_{\lambda},0)  =  (\lambda_{\mathbb R}(t)\xi^{s}_{\lambda},0);\\[0.4pc]
g^{S}_{(o_{H},t)}((X,0),(X,0))  =  g^{S}_{(o_{H},t)}((0,\frac{\partial}{\partial t}),(0,\frac{\partial}{\partial t}) ) =  1,\\[0.4pc]
g^{S}_{(o_{H},t)}((\xi^{s}_{\lambda},0),(\xi^{s}_{\lambda},0))  =  \frac{1}{\lambda^{2}_{\mathbb R}(t)}g^{S}_{(o_{H},t)}((\zeta^{s}_{\lambda},0),(\zeta^{s}_{\lambda},0))  =  1,
\end{array}
$$
where $s = 1,\dots, m_{\lambda},$ $\lambda\in \Sigma^{+},$ being zero the rest of components of $g^{S}.$
\end{proposition}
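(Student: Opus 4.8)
The plan is to reduce the whole computation to the fibre over the origin, translate the maps of (\ref{f++}) into horizontal and vertical lifts, and then read off $J$ and $g^{S}$ from their definitions. First I would record the $G$-invariance. Both $J=J^{\nabla}$ and $g^{S}$ are canonical invariants of the pair $(g,\nabla)$; since $g$ is $G$-invariant, each translation $\tau_{b}$ is an isometry of $(G/K,g)$, hence preserves the Levi--Civita connection $\nabla$, the horizontal distribution, the lifts $X^{\tt h},X^{\tt v}$, and therefore $J$ and $g^{S}$. Because $\phi$ and $f^{+}$ are $G$-equivariant and the $G$-action on $G/H\times{\mathbb R}^{+}$ fixes the ${\mathbb R}^{+}$-factor, the structure transported by $\phi\circ f^{+}$ is $G$-invariant. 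Consequently it is enough to evaluate $J$ and $g^{S}$ along the curve $t\mapsto(o_{H},t)$, i.e.\ at the points $[(e,tX)]$, which is precisely the data listed in the statement.

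The core step is to identify, under $(\phi\circ f^{+})_{*}$, the images computed in (\ref{f++}) as horizontal and vertical lifts. Using the parametrisation (\ref{lGm}) of $T_{[(a,x)]}(G\times_{K}{\mathfrak m})$ by pairs $(\eta,u)\in{\mathfrak m}\times{\mathfrak m}$, the vertical vectors are exactly those with $\eta=0$: the curve $s\mapsto[(a,x+su)]$ stays in a single fibre, so $\phi_{*}\pi_{*(a,x)}(0,u_{x})$ is the vertical lift of $(\tau_{a})_{*o}u$ at $(\tau_{a})_{*o}x$. For the horizontal directions I would invoke the symmetric-space fact that parallel transport along the geodesic $s\mapsto\exp(s\eta)\cdot o$, with $\eta\in{\mathfrak m}$, is realised by the transvection differential $(\tau_{\exp s\eta})_{*}$ (the Levi--Civita connection of a symmetric space being its canonical connection; see \cite{He}). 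Since $\phi[(\exp s\eta,tX)]=(\tau_{\exp s\eta})_{*o}(tX)$ is then the parallel transport of the fixed vector $tX$ along that geodesic, its $s$-derivative $\phi_{*}\pi_{*(e,tX)}(\eta,0)$ is the horizontal lift $\eta^{\tt h}$ at $tX$. Applying this to (\ref{f++}) yields the dictionary
\[
(\phi\circ f^{+})_{*}(X,0)=X^{\tt h},\quad (\phi\circ f^{+})_{*}(0,\tfrac{\partial}{\partial t})=X^{\tt v},\quad (\phi\circ f^{+})_{*}(\xi^{s}_{\lambda},0)=(\xi^{s}_{\lambda})^{\tt h},\quad (\phi\circ f^{+})_{*}(\zeta^{s}_{\lambda},0)=-\lambda_{\mathbb R}(t)(\xi^{s}_{\lambda})^{\tt v},
\]
the last equality because by (\ref{eq.ms4.3}) the ${\mathfrak m}$-component expelled from ${\mathfrak k}$ is $[\zeta^{s}_{\lambda},tX]=-\lambda_{\mathbb R}(t)\xi^{s}_{\lambda}$.

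With this dictionary the formulas are immediate. Recalling that $J$ and $g^{S}$ on $G/H\times{\mathbb R}^{+}$ are the $(\phi\circ f^{+})$-pullbacks, feeding the four lifts into (\ref{J}) gives $J(X,0)=(0,\tfrac{\partial}{\partial t})$, $J(0,\tfrac{\partial}{\partial t})=(-X,0)$, $J(\xi^{s}_{\lambda},0)=(\xi^{s}_{\lambda})^{\tt v}=(-\lambda_{\mathbb R}(t)^{-1}\zeta^{s}_{\lambda},0)$ and $J(\zeta^{s}_{\lambda},0)=-\lambda_{\mathbb R}(t)\bigl(-(\xi^{s}_{\lambda})^{\tt h}\bigr)=(\lambda_{\mathbb R}(t)\xi^{s}_{\lambda},0)$. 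For the metric I would use (\ref{Sasaki}): $g^{S}$ pairs two horizontal (resp.\ two vertical) lifts by $g$ of their projections, and pairs a horizontal with a vertical lift to zero. Since $\{X\}\cup\{\xi^{j}_{\varepsilon}\}\cup\{\xi^{p}_{\varepsilon/2}\}$ is $\langle\cdot,\cdot\rangle$-orthonormal and the spaces ${\mathfrak a},{\mathfrak m}_{\varepsilon},{\mathfrak m}_{\varepsilon/2}$ are mutually orthogonal, this returns $g^{S}((X,0),(X,0))=g^{S}((0,\tfrac{\partial}{\partial t}),(0,\tfrac{\partial}{\partial t}))=1$, $g^{S}((\xi^{s}_{\lambda},0),(\xi^{s}_{\lambda},0))=1$ and $g^{S}((\zeta^{s}_{\lambda},0),(\zeta^{s}_{\lambda},0))=\lambda_{\mathbb R}(t)^{2}$, all mixed terms vanishing by the orthogonality and the horizontal--vertical splitting.

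I expect the only genuine difficulty to be the horizontal-lift identification, namely that $\phi_{*}\pi_{*(e,tX)}(\eta,0)$ is horizontal for $\eta\in{\mathfrak m}$; this is where the symmetric structure enters and must be justified with care. If one prefers to avoid the transvection argument, the alternative is to compute the connection map $K$ directly from the homogeneous covariant derivative $\alpha$ in (\ref{nabla}) and verify $K\big(\phi_{*}\pi_{*}(\eta,0)\big)=0$, at the cost of a short bracket computation using (\ref{eq.ms4.3}); either route then reduces the displayed formulas to a routine substitution.
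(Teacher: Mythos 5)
Your proposal is correct and follows the same overall reduction as the paper: push the basis (\ref{basis}) through $f^{+}$ via (\ref{f++}), identify $\pi_{*(e,tX)}(\xi,0)$ and $\pi_{*(e,tX)}(0,u_{tX})$ with horizontal and vertical lifts, and then substitute into (\ref{J}) and (\ref{Sasaki}). The one point where you genuinely diverge --- and which you correctly single out as the only delicate step --- is the proof that $\phi_{*}\pi_{*(e,x)}(\eta,0)$ is horizontal. The paper argues through the connection map: it views $x^{\tau}$ as a local section of $T(G/K)$, notes $(x^{\tau})_{*o}\xi=\phi_{*}\pi_{*(e,x)}(\xi,0)$, and kills the vertical part via $K_{x}((x^{\tau})_{*o}\xi)=\nabla_{\xi}x^{\tau}=\alpha(\xi,x)=0$, the vanishing coming from (\ref{nabla}) together with $[{\mathfrak m},{\mathfrak m}]\subset{\mathfrak k}$ and ${\mathfrak U}=0$ for a symmetric space. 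You instead invoke the transvection property, namely that $(\tau_{\exp s\eta})_{*o}$ realises parallel transport along the geodesic $\exp(s\eta)\cdot o$, so that $s\mapsto\phi[(\exp s\eta,tX)]$ is a parallel section whose derivative is horizontal by definition. Both rest on the same symmetric-space input; yours is more geometric and bypasses the explicit connection map, while the paper's stays inside the homogeneous formalism of Section 2 and is precisely the ``alternative'' you sketch in your last paragraph. The remaining steps --- the vertical identification, the coefficient $-\lambda_{\mathbb R}(t)$ extracted from (\ref{eq.ms4.3}), and the metric values from orthonormality of the chosen bases --- agree with the paper's computation (\ref{JG}).
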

\begin{proof} Denote by $\widetilde{\pi}$ the projection $\widetilde{\pi}\colon G\times_{K}{\mathfrak m}\to G/K.$ Then, $\widetilde{\pi} = \pi^{T}\circ \phi.$ Let $\widetilde{\mathcal V}$ and $\widetilde{\mathcal H}$ be the distributions $\widetilde{\mathcal V}:=(\phi^{-1})_{*}({\mathcal V}) = {\rm Ker}\;\widetilde{\pi}_{*}$ and $\widetilde{\mathcal H} :=(\phi^{-1})_{*}({\mathcal H})$ in $T(G\times_{K}{\mathfrak m}).$ Taking into account that $(\widetilde{\pi}\circ \pi)(a,x) = \pi_{K}(a),$ for all $(a,x)\in G\times {\mathfrak m},$ and using (\ref{lGm}), we have
\begin{equation}\label{vertical}
\widetilde{\mathcal V}_{[(a,x)]} = \{\pi_{*(a,x)}(0,u_{x})\;:\; u\in {\mathfrak m}\},\quad (a,x)\in G\times {\mathfrak m}.
\end{equation}
Because $\nabla$ is $G$-invariant, ${\mathcal V},$ $\widetilde{\mathcal V},$ ${\mathcal H}$ and $\widetilde{\mathcal H}$ are $G$-invariant distributions. Moreover,
\begin{equation}\label{verticallift}
\xi^{\tt v}_{[(e,x)]} = \pi_{*(e,x)}(0,\xi_{x}),\quad \xi\in {\mathfrak m}.
\end{equation}
For each $\eta\in T_{u}TM$ denote by $\eta^{\tt ver}$ (resp., $\eta^{\tt hor})$ the ${\mathcal V}_{u}$-component (resp., ${\mathcal H}_{u}$-component) of $\eta$ with respect to the decomposition $T_{u}T(G/K) = {\mathcal H}_{u}\oplus {\mathcal V}_{u}.$ Recall that the connection map $K$ is defined by $K_{u}(\eta) = \iota_{u}(\eta^{\tt ver}),$ where $\iota_{u}$ is the projection $\iota_{u}\colon T_{u}T(G/K)\to T_{p}(G/K)$ such that $\iota_{u}(\eta) = 0,$ for all $\eta\in {\mathcal H}_{u}$ and $\iota_{u}(v^{\tt v}_{u}) = v.$ This satisfies $K_{X_{p}}(X_{*p}u) = \nabla_{u}X,$ for all $X\in {\mathfrak X}(G/K)$ and $u\in T_{p}(G/K).$

For each $x\in {\mathfrak m},$ consider $x^{\tau}$ as a local section of $T(G/K).$ Then
\begin{equation}\label{xi+}
(x^{\tau})_{*o}\xi = \frac{d}{dt}_{\mid t = 0}x^{\tau}(\exp t\xi) = \frac{d}{dt}_{\mid t = 0}(\tau_{\exp t\xi_{*o}})x = \phi_{*[(e,x)]}(\pi_{*(e,x)}(\xi,0)),\quad \xi\in {\mathfrak m}.
\end{equation}
Since, using (\ref{nabla}), $K_{x}((x^{\tau})_{*o}\xi) = \nabla_{\xi}x^{\tau} = 0,$ we have $((x^{\tau})_{*o}\xi)^{\tt ver} = 0_{x}\in T_{x}T_{o}M.$ Then from (\ref{xi+}), the vertical component $(\pi_{*(e,x)}(\xi,0))^{\tt ver}$ of $\pi_{*(e,x)}(\xi,0)$ in the decomposition $T_{[(e,x)]}G\times_{K}{\mathfrak m} = \widetilde{\mathcal H}_{[(e,x)]}\oplus\widetilde{\mathcal V}_{[(e,x)]}$ is the zero vector. Hence, applying (\ref{vertical}),  
\[
(\pi_{*(e,x)}(\xi,u_{x}))^{\tt ver} = (\pi_{*(e,x)}(0,u_{x}))^{\tt ver} = \pi_{*(e,x)}(0,u_{x}),\;\;\mbox{\rm for all}\; (\xi,u)\in {\mathfrak m}\times{\mathfrak m}.
\]
Because $\widetilde{\pi}_{*[(e,x)]}(\pi_{*(e,x)}(\xi,u_{x})) = \xi\in {\mathfrak m}\cong T_{o}M,$ 
\begin{equation}\label{vh}
\xi^{\tt h}_{[(e,x)]} = (\pi_{*(e,x)}(\xi,u_{x}))^{\tt hor}_{[(e,x)]} = \pi_{*(e,x)}(\xi, 0_{x})
\end{equation}
and $\widetilde{\mathcal H}_{[(a,x)]} = \{\pi_{*(a,x)}(\xi^{\tt l}_{a},0_{x})\colon\xi\in {\mathfrak m}\}.$ Then, applying in (\ref{J}) and (\ref{Sasaki}), (\ref{verticallift}) and (\ref{vh}), $(J,g^{S})$ on $G\times_{K}{\mathfrak m}$ is determined by
\begin{equation}\label{JG}
\begin{array}{l}
J_{[(e,x)]}\pi_{*(e,x)}(\xi,u_{x}) = \pi_{*(e,x)}(-u,\xi_{x}),\\[0.4pc]
g^{S}(\pi_{*(e,x)}(\xi,u_{x}),\pi_{*(e,x)}(\eta,v_{x})) = \langle\xi,\eta\rangle + \langle u, v\rangle,
\end{array}
\end{equation}
for all $x,\xi, \eta, u, v\in {\mathfrak m}.$  Hence the result is proved using (\ref{f++}).
\end{proof}

  From Proposition \ref{estandard}, $(0,\frac{\partial}{\partial t})$ is a $G$-invariant unit vector field on the Riemannian manifold $(G/H\times {\mathbb R}^{+},g^{S})$ and it is normal to the submanifolds $\iota_{r}(G/H) = G/H\times \{r\}$ of $G/H\times {\mathbb R}^{+},$ for each $r>0.$ Hence we have the following.
\begin{proposition}\label{pstandard} The standard almost contact metric structure $(\varphi,\xi,\eta,\tilde{g}^{S} = \iota^{*}_{r}g^{S})$ on the tangent sphere bundle $T_{r}(G/K) = (\phi\circ f^{+})(G/H\times\{r\}),$ for each $r>0,$ is $G$-invariant and it is determined at $o_{H}$ by $\xi_{o_{H}}= X,$ $\eta_{o_{H}} = \langle X,\cdot\rangle,$ $\varphi_{o_{H}}X = 0$ and
 $$
\begin{array}{l}
\varphi_{o_{H}}\xi^{s}_{\lambda}  =  -\frac{1}{\lambda_{\mathbb R}(r)}\zeta^{s}_{\lambda}, \quad \varphi_{o_{H}}\zeta^{s}_{\lambda} = \lambda_{\mathbb R}(r)\xi^{s}_{\lambda}, \hspace{2.35cm} s = 1,\dots, m_{\lambda},\;\lambda\in \Sigma^{+};\\[0.4pc]
\tilde{g}^{S}_{o_{H}}= \left\{\begin{array}{lcl}\tilde{\mathbf g}_{o_{H}}^{(1,1,r^{2})} = \langle\cdot,\cdot\rangle_{\mathfrak m} + r^{2}\langle\cdot,\cdot\rangle_{{\mathfrak k}_{\varepsilon}} &\mbox{if} & G/K = {\mathbb S}^{n}\;\mbox{or}\;\; {\mathbb R}{\mathbf P}^{n},\\[0.4pc]
 \tilde{\mathbf g}^{(1;1;\lambda^{2}_{\mathbb R}(r))}_{o_{H}} =\langle\cdot,\cdot\rangle_{\mathfrak m} + r^{2}\langle\cdot,\cdot\rangle_{{\mathfrak k}_{\varepsilon}} + \frac{r^{2}}{4}\langle\cdot,\cdot\rangle_{{\mathfrak k}_{\varepsilon/2}} & \mbox{if} & G/K =  {\mathbb C}{\mathbf P}^{n},\; {\mathbb H}{\mathbf P}^{n}\;\mbox{or}\; \;{\mathbb C}a{\mathbf P}^{n}.
 \end{array}
 \right.
 \end{array}
 $$
 Moreover, the standard field $\xi$ of $T(G/K)$ is the $G$-invariant vector field determined by $\xi_{(o_{H},t)}= (X,0),$ for all $t\in {\mathbb R}^{+}.$
 \end{proposition}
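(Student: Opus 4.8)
The plan is to realize $(\varphi,\xi,\eta,\tilde{g}^{S})$ as the almost contact metric structure induced on the hypersurface $G/H\times\{r\}$ of the almost Hermitian manifold $(G/H\times{\mathbb R}^{+},J,g^{S})$, and then simply read off every component from Proposition \ref{estandard}. First I would recall the general hypersurface construction already referred to in the Introduction: if $N$ is the outward unit normal, then $\xi=-JN$, the one-form is $\eta(Y)=g^{S}(JY,N)=g^{S}(Y,\xi)$, the tensor $\varphi$ is the tangential part of $J$, namely $\varphi Y=JY-\eta(Y)N$, and $\tilde{g}^{S}$ is the restriction of $g^{S}$. A one-line verification, using $J^{2}=-{\rm I}$ and the skew-symmetry of $J$ relative to $g^{S}$ (which also shows $\xi=-JN$ is tangent, since $g^{S}(JN,N)=0$), confirms $\varphi^{2}=-{\rm I}+\eta\otimes\xi$ and $\eta(\xi)=1$, so the quadruple is a genuine almost contact metric structure and the compatibility of $\tilde{g}^{S}$ is inherited from the Hermitian property of $g^{S}$.

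Next I would substitute the explicit unit normal $N=(0,\frac{\partial}{\partial t})$, which the paragraph preceding the statement already identifies as $G$-invariant and normal to $G/H\times\{r\}$. Evaluating $J_{(o_{H},r)}$ from Proposition \ref{estandard} gives $\xi=-J(0,\frac{\partial}{\partial t})=(X,0)$, whence $\xi_{o_{H}}=X$; and from the diagonal form of $g^{S}_{(o_{H},r)}$ one reads $\eta_{o_{H}}=\langle X,\cdot\rangle$. The key simplification is that $J$ maps each $(\xi^{s}_{\lambda},0)$ and $(\zeta^{s}_{\lambda},0)$ back into $\overline{\mathfrak m}\times\{0\}$, i.e. into directions tangent to the hypersurface with vanishing $\frac{\partial}{\partial t}$-component; hence $\eta$ annihilates these vectors and $\varphi$ coincides with $J$ on them, producing $\varphi_{o_{H}}\xi^{s}_{\lambda}=-\lambda_{\mathbb R}(r)^{-1}\zeta^{s}_{\lambda}$ and $\varphi_{o_{H}}\zeta^{s}_{\lambda}=\lambda_{\mathbb R}(r)\xi^{s}_{\lambda}$. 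Only the $\mathfrak a$-direction couples to the normal, since $J(X,0)=(0,\frac{\partial}{\partial t})=N$ forces $\varphi_{o_{H}}X=0$, consistent with $\varphi\xi=0$.

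For the metric I would restrict the components of $g^{S}$ in Proposition \ref{estandard} to $\overline{\mathfrak m}\times\{0\}$, obtaining coefficient $1$ on $\mathfrak a$ and on each ${\mathfrak m}_{\lambda}$ and coefficient $\lambda_{\mathbb R}(r)^{2}$ on each ${\mathfrak k}_{\lambda}$. Using $\varepsilon_{\mathbb R}={\rm id}$ on ${\mathbb R}^{+}$ this yields $\varepsilon_{\mathbb R}(r)^{2}=r^{2}$ on ${\mathfrak k}_{\varepsilon}$ and $(\varepsilon/2)_{\mathbb R}(r)^{2}=r^{2}/4$ on ${\mathfrak k}_{\varepsilon/2}$, which is exactly $\tilde{\mathbf g}^{(1,1,r^{2})}_{o_{H}}$ in the cases ${\mathbb S}^{n},{\mathbb R}{\mathbf P}^{n}$ and $\tilde{\mathbf g}^{(1;1;\lambda^{2}_{\mathbb R}(r))}_{o_{H}}$ in the projective cases, the dichotomy being governed by whether $\varepsilon/2\in\Sigma$ (Lemma \ref{restricted} and Table I). The $G$-invariance of the structure is then immediate, as $(J,g^{S})$ is $G$-invariant by Proposition \ref{estandard}, the slices $G/H\times\{r\}$ are $G$-invariant, and so is $N$. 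Finally, since $N=(0,\frac{\partial}{\partial t})$, and hence $\xi=-JN=(X,0)$, are defined for every $t\in{\mathbb R}^{+}$, the identical computation exhibits the standard field of $T(G/K)$ as the $G$-invariant vector field with $\xi_{(o_{H},t)}=(X,0)$.

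I expect no serious obstacle: the entire statement is a transcription of Proposition \ref{estandard} through the hypersurface formulas. The only point demanding mild care is the bookkeeping of which $J$-images remain tangent (the ${\mathfrak m}_{\lambda},{\mathfrak k}_{\lambda}$ blocks, where $\varphi=J$ and $\eta=0$) versus the one block that meets the normal $\frac{\partial}{\partial t}$-direction (the $\mathfrak a$-block, giving $\xi$ and $\varphi X=0$), together with the correct substitution of $\lambda_{\mathbb R}(r)$ for $\lambda=\varepsilon,\varepsilon/2$.
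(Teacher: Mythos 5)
Your proposal is correct and follows exactly the route the paper intends: Proposition \ref{pstandard} is stated there without a separate proof, as an immediate consequence of Proposition \ref{estandard} together with the observation that $(0,\frac{\partial}{\partial t})$ is the $G$-invariant unit normal to $G/H\times\{r\}$, so that the hypersurface formulas $\xi=-JN$, $\varphi=J-\eta\otimes N$, $\tilde{g}^{S}=\iota_{r}^{*}g^{S}$ yield every component by direct substitution. Your bookkeeping of which $J$-images stay tangent and the evaluation $\lambda_{\mathbb R}(r)^{2}=r^{2}$ resp.\ $r^{2}/4$ match the paper's statement exactly.
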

 From Propositions \ref{pproperties} and \ref{pstandard}, the projection $\pi^{T}\colon (T_{r}(G/K),\widetilde{\mathbf g})\to (G/K,{\mathbf g})$ is a Riemannian submersion and the following is proved.
 \begin{corollary} The standard vector field $\xi$ of $(T_{r}(G/K),\tilde{g}^{S})$ is Killing if and only if $r = 1$ and $G/K = {\mathbb S}^{n}$ or ${\mathbb R}{\mathbf P}^{n}.$ Then $(G/H,\tilde{g}^{S})$ is naturally reductive.
 \end{corollary}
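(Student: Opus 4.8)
The plan is to read the statement off the two preceding propositions, with essentially no extra computation. By Proposition \ref{pstandard}, the restriction of the standard vector field $\xi$ to $T_{r}(G/K)$ is the $G$-invariant vector field whose value at the origin $o_{H}$ is $X\in{\mathfrak a}$; that is, $\xi$ is exactly the field denoted by $X$ in Proposition \ref{pproperties}. Consequently, part (i) of that proposition applies verbatim: $\xi$ is Killing on $(T_{r}(G/K),\tilde g^{S})$ if and only if the coefficients of the inner product (\ref{Bb}) representing $\tilde g^{S}_{o_{H}}$ satisfy $a_{\lambda}=b_{\lambda}$ for every $\lambda\in\Sigma^{+}$.

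The next step is to extract these coefficients from Proposition \ref{pstandard} and test the condition case by case. For $G/K={\mathbb S}^{n}$ or ${\mathbb R}{\mathbf P}^{n}$ we have $\Sigma^{+}=\{\varepsilon\}$ and $\tilde g^{S}_{o_{H}}=\tilde{\mathbf g}^{(1,1,r^{2})}$, so $a_{\varepsilon}=1$ and $b_{\varepsilon}=r^{2}$; the single equation $a_{\varepsilon}=b_{\varepsilon}$ reads $r^{2}=1$, i.e. $r=1$ since $r>0$. For $G/K={\mathbb C}{\mathbf P}^{n}$, ${\mathbb H}{\mathbf P}^{n}$ or ${\mathbb C}a{\mathbf P}^{2}$ we have $\Sigma^{+}=\{\varepsilon,\varepsilon/2\}$ and $\tilde g^{S}_{o_{H}}=\tilde{\mathbf g}^{(1;1;\lambda_{\mathbb R}^{2}(r))}$, so $a_{\varepsilon}=a_{\varepsilon/2}=1$, while $b_{\varepsilon}=\varepsilon_{\mathbb R}^{2}(r)=r^{2}$ and $b_{\varepsilon/2}=(\varepsilon/2)_{\mathbb R}^{2}(r)=r^{2}/4$. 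The two Killing conditions then demand simultaneously $r^{2}=1$ and $r^{2}=4$. This incompatibility is the crux of the argument: the presence of both $\varepsilon$ and $\varepsilon/2$ as restricted roots forces two inconsistent constraints on $r$, so $\xi$ is never Killing for the complex, quaternionic and Cayley projective spaces, leaving only the spheres and ${\mathbb R}{\mathbf P}^{n}$, and only at radius $r=1$. This establishes the stated equivalence.

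For the final assertion I would specialize to $r=1$ with $G/K={\mathbb S}^{n}$ or ${\mathbb R}{\mathbf P}^{n}$. Here ${\mathfrak m}_{\varepsilon/2}={\mathfrak k}_{\varepsilon/2}=0$, so $\overline{\mathfrak m}={\mathfrak a}\oplus{\mathfrak m}_{\varepsilon}\oplus{\mathfrak k}_{\varepsilon}$, and setting $r=1$ in Proposition \ref{pstandard} gives $\tilde g^{S}_{o_{H}}=\langle\cdot,\cdot\rangle_{\mathfrak m}+\langle\cdot,\cdot\rangle_{{\mathfrak k}_{\varepsilon}}=\langle\cdot,\cdot\rangle_{\overline{\mathfrak m}}$, precisely the restriction of the bi-invariant form $\langle\cdot,\cdot\rangle$ to $\overline{\mathfrak m}$. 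Being proportional to $\langle\cdot,\cdot\rangle_{\overline{\mathfrak m}}$ (with constant $1$), Proposition \ref{pproperties}(ii) yields that $(G/H,\tilde g^{S})$ is naturally reductive with respect to ${\mathfrak g}=\overline{\mathfrak m}\oplus{\mathfrak h}$. I expect no genuine obstacle here: the whole corollary is bookkeeping on top of Propositions \ref{pproperties} and \ref{pstandard}, and the only point requiring care is matching the coefficient conventions of (\ref{Bb}) against the two metric formulas of Proposition \ref{pstandard}.
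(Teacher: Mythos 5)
Your argument is correct and is exactly the route the paper intends: the corollary is stated as an immediate consequence of Propositions \ref{pproperties} and \ref{pstandard}, and your case-by-case matching of the coefficients $a_{\lambda},b_{\lambda}$ of $\tilde g^{S}_{o_{H}}$ against the Killing criterion $a_{\lambda}=b_{\lambda}$ (with the incompatibility $r^{2}=1$ versus $r^{2}=4$ ruling out the spaces with $\varepsilon/2\in\Sigma$), followed by Proposition \ref{pproperties}(ii) for natural reductivity, is precisely that bookkeeping. No gaps.
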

 
 \section{Invariant Sasakian structures on $T_{r}(G/K)$}  We first consider all $G$-invariant Riemannian metrics ${\mathbf g}$ on $G/H\times {\mathbb R}^{+}\cong D^{+}(G/K)$ such that the vector field $(0,\frac{\partial}{\partial t})$ on $G/H\times {\mathbb R}^{+}$ is orthogonal to the hypersurface $\iota_{r}(G/H)= G/H\times\{r\}\cong T_{r}(G/K),$ for each $r>0.$ From Proposition \ref{metricB}, ${\mathbf g}$ is determined at the points $(o_{H},t)\in G/H\times {\mathbb R}^{+}$ and with respect to the basis given in (\ref{basis}), by
\begin{equation}\label{gg}
\begin{array}{lcl}
{\mathbf g}_{(o_{H},t)}((X,0),(X,0)) = a^{2}(t), & {\mathbf g}_{(o_{H},t)}((0,\frac{\partial}{\partial t}),(0,\frac{\partial}{\partial t})) = b^{2}(t),\\[0.4pc]
{\mathbf g}_{(o_{H},t)}((\xi^{s}_{\lambda},0),(\xi^{s}_{\lambda},0)) = a_{\lambda}(t), & {\mathbf g}_{(o_{H},t)}((\zeta^{s}_{\lambda},0),(\zeta^{s}_{\lambda},0)) = b_{\lambda}(t),
\end{array}
\end{equation}
where $a,b,a_{\lambda},b_{\lambda}\colon {\mathbb R}^{+}\to {\mathbb R}^{+}$ are smooth functions, for each $\lambda\in \Sigma^{+},$ being zero for the rest of components.

Next, we introduce a family of $G$-invariant almost complex structures on $D^{+}(G/K)$ containing the standard structure $J_{\mid D^{+}(G/K)}.$ Let $q\colon {\mathbb R}^{+}\to {\mathbb R}^{+}$ be a smooth function and let ${\mathcal J}^{q}_{t}$ be the $(1,1)$-tensor on $T_{(o_{H},t)}(G/H\times {\mathbb R}^{+})\cong \overline{\mathfrak m}\times T_{t}{\mathbb R},$ for each $t\in {\mathbb R}^{+},$ given by
\begin{equation}\label{Jq}
\begin{array}{lcl}
{\mathcal J}^{q}_{t}(X,0) = (0,\frac{\partial}{\partial t}), & & {\mathcal J}^{q}_{t}(0,\frac{\partial}{\partial t}) = (-X,0),\\[0.4pc]
{\mathcal J}^{q}_{t}(\xi^{s}_{\lambda},0) = (-\frac{1}{q_{\lambda}(t)}\zeta^{s}_{\lambda},0), & & {\mathcal J}^{q}_{t}(\zeta^{s}_{\lambda},0) = (q_{\lambda}(t)\xi^{s}_{\lambda},0),
\end{array}
\end{equation}
for all $s = 1,\dots, m_{\lambda},$ $\lambda\in \Sigma^{+},$ where $q_{\lambda} = q \circ \lambda_{\mathbb R}.$ Because the subspaces ${\mathfrak m}_{\lambda}$ and ${\mathfrak k}_{\lambda}$ are ${\rm Ad}(H)$-invariant, ${\mathcal J}^{q}$ on $({\mathfrak m}^{+}\oplus {\mathfrak k}^{+})\times\{0\}$ is ${\rm Ad}(H)$-invariant and ${\mathcal J}^{q}$ determines a (unique) $G$-invariant almost complex structure $J^{q}$ on $G/H\times {\mathbb R}^{+}$ such that $J^{q}_{(o_{H},t)} = {\mathcal J}^{q}_{t}.$ 

From Proposition \ref{estandard}, $J^{q},$ $q$ being the identity map, is the standard almost complex structure $J$ on $D^{+}(G/K)$ and the Sasaki metric $g^{S}$ coincides with ${\mathbf g}$ in (\ref{gg}), where the functions $a,$ $b$ and $a_{\lambda}$ are constant, with $a = b = a_{\lambda} = 1,$ and $b_{\lambda} = \lambda^{2}_{\mathbb R}.$

\begin{lemma}\label{lJqX} $J^{q}: = (f^{+})_{*}J^{q}(f^{+})^{-1}_{*}$ on $G\times_{K}({\mathfrak m}\setminus \{0\})$ is given by
$$
\begin{array}{lcl}
J^{q}_{[(a,x)]}\pi_{*}(\xi^{\tt l}_{a},u_{x}) & = & \pi_{*}(-\langle u,{\rm Ad}_{k}X\rangle X^{\tt l}_{ak} - \sum_{\lambda\in \Sigma^{+}} \frac{q_{\lambda}(t)}{\lambda_{\mathbb R}(t)}\sum_{s=1}^{m_{\lambda}}\langle u,{\rm Ad}_{k}\xi^{s}_{\lambda}\rangle(\xi^{s}_{\lambda})^{\tt l}_{ak},\\[0.4pc]
 & & \hspace{1.5cm}\langle \xi,{\rm Ad}_{k}X\rangle X_{tX} + \sum_{\lambda\in \Sigma^{+}}\frac{\lambda_{\mathbb R}(t)}{q_{\lambda}(t)}\sum_{s=1}^{m_{\lambda}}\langle\xi,{\rm Ad}_{k}\xi^{s}_{\lambda}\rangle(\xi^{s}_{\lambda})_{tX}),
\end{array}
$$
for all $a\in G,$ $x = t{\rm Ad}_{k}X,$ $t\in {\mathbb R}^{+}$ and $k\in K,$ and $(\xi,u)\in {\mathfrak m}\times {\mathfrak m}.$
\end{lemma}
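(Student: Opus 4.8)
The plan is to exploit the $G$-invariance of $J^{q}$ together with the two commuting actions on $G\times{\mathfrak m}$ from Section~2.3, reducing everything to the orbit representatives $[(e,tX)]$. First I would record the value of $J^{q}$ there. Since $(f^{+})^{-1}([(e,tX)])=(o_{H},t)$, this is the composition $(f^{+})_{*(o_{H},t)}\circ{\mathcal J}^{q}_{t}\circ(f^{+})^{-1}_{*(o_{H},t)}$, and the four rules (\ref{f++}) invert at once on the basis (\ref{basis}). Composing exactly as in the proof of Proposition~\ref{estandard}, but starting from ${\mathcal J}^{q}_{t}$ of (\ref{Jq}) instead of $J_{(o_{H},t)}$, the net coefficient $\lambda_{\mathbb R}(t)/q_{\lambda}(t)$ (resp.\ its inverse) now arises by combining the $q_{\lambda}(t)$ of (\ref{Jq}) with the $\lambda_{\mathbb R}(t)$ of (\ref{f++}); one obtains, for all $\bar\xi,\bar u\in{\mathfrak m}$,
\begin{equation*}
J^{q}_{[(e,tX)]}\pi_{*(e,tX)}(\bar\xi,\bar u_{tX})=\pi_{*(e,tX)}(P,Q_{tX}),
\end{equation*}
with $P=-\langle\bar u,X\rangle X-\sum_{\lambda\in\Sigma^{+}}\frac{q_{\lambda}(t)}{\lambda_{\mathbb R}(t)}\sum_{s=1}^{m_{\lambda}}\langle\bar u,\xi^{s}_{\lambda}\rangle\xi^{s}_{\lambda}$ and $Q=\langle\bar\xi,X\rangle X+\sum_{\lambda\in\Sigma^{+}}\frac{\lambda_{\mathbb R}(t)}{q_{\lambda}(t)}\sum_{s=1}^{m_{\lambda}}\langle\bar\xi,\xi^{s}_{\lambda}\rangle\xi^{s}_{\lambda}$, the coefficients being the orthogonal projections of $\bar u$ and $\bar\xi$ onto the orthonormal basis $\{X,\xi^{s}_{\lambda}\}$ of ${\mathfrak m}$.

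Next I would propagate this over the whole orbit. Writing $x\in{\mathfrak m}\setminus\{0\}$ as $x=t\,{\rm Ad}_{k}X$ with $k\in K$, the defining right $K$-action of $\pi$ gives $[(a,x)]=[(ak,tX)]=\bar l_{ak}[(e,tX)]$, where $\bar l_{b}$ denotes the descent of $l_{b}$ to $G\times_{K}{\mathfrak m}$. By the $G$-invariance of $J^{q}$ it then suffices to transport the displayed identity by $(\bar l_{ak})_{*}$. To do so I would first carry the argument $\pi_{*(a,x)}(\xi^{\tt l}_{a},u_{x})$ back to $[(e,tX)]$: the $G$-equivariance of $\pi$ sends it to $\pi_{*(k^{-1},x)}(\xi^{\tt l}_{k^{-1}},u_{x})$, and then the right action $r_{k}$ (which fixes $(e,tX)=r_{k}(k^{-1},x)$, has $G$-factor $R_{k}\circ L_{k^{-1}}$ giving ${\rm Ad}_{k^{-1}}$ at $e$, and ${\mathfrak m}$-factor ${\rm Ad}_{k^{-1}}$) yields
\begin{equation*}
(\bar l_{(ak)^{-1}})_{*}\,\pi_{*(a,x)}(\xi^{\tt l}_{a},u_{x})=\pi_{*(e,tX)}\big(({\rm Ad}_{k^{-1}}\xi)^{\tt l}_{e},({\rm Ad}_{k^{-1}}u)_{tX}\big).
\end{equation*}
Applying the first step with $\bar\xi={\rm Ad}_{k^{-1}}\xi$, $\bar u={\rm Ad}_{k^{-1}}u$, and then pushing forward by $(\bar l_{ak})_{*}$ (which replaces each $(\cdot)^{\tt l}_{e}$ by $(\cdot)^{\tt l}_{ak}$ and leaves the vertical part at $tX$ untouched) returns a vector based at $(ak,tX)$ --- precisely the base point on the right-hand side of the claim.

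Finally, the inner-product coefficients match after using that $\langle\cdot,\cdot\rangle$ is ${\rm Ad}(G)$-invariant: $\langle{\rm Ad}_{k^{-1}}u,X\rangle=\langle u,{\rm Ad}_{k}X\rangle$, $\langle{\rm Ad}_{k^{-1}}u,\xi^{s}_{\lambda}\rangle=\langle u,{\rm Ad}_{k}\xi^{s}_{\lambda}\rangle$, and likewise for $\xi$, which turns $P^{\tt l}_{ak}$ and $Q_{tX}$ into the stated expressions. I expect the main obstacle to be purely the bookkeeping of the action differentials on $G\times{\mathfrak m}$ --- keeping straight how $(l_{b})_{*}$ and $(r_{k})_{*}$ act on the mixed pair $(\xi^{\tt l},u_{x})$ and how these descend through $\pi_{*}$ in view of (\ref{kk})--(\ref{lGm}) --- rather than any conceptual difficulty; once the identity $[(a,x)]=[(ak,tX)]$ and the two equivariances are in place, the computation is forced.
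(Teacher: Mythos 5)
Your proposal is correct and follows essentially the same route as the paper: compute $J^{q}$ on the $\pi_{*}$-images of the basis vectors via (\ref{f++}) and (\ref{Jq}), then reduce a general point $[(a,x)]$ with $x=t\,{\rm Ad}_{k}X$ to the slice via the identity $\pi_{*}(\xi^{\tt l}_{a},u_{x})=\pi_{*}(({\rm Ad}_{k^{-1}}\xi)^{\tt l}_{ak},({\rm Ad}_{k^{-1}}u)_{tX})$ and conclude by the ${\rm Ad}(K)$-invariance of $\langle\cdot,\cdot\rangle$. The only cosmetic difference is that you anchor the computation at $[(e,tX)]$ and transport by the left action, whereas the paper writes the intermediate formula directly at $[(a,tX)]$ for arbitrary $a$; the substance is identical.
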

\begin{proof} Using (\ref{kk}), (\ref{lGm}) and (\ref{f++}), together with (\ref{Jq}), we have
$$
\begin{array}{lcl}
(J^{q})_{[(a,tX)]}\pi_{*}(X^{\tt l}_{a},0)  =  \pi_{*}(0, X_{tX}), & &\hspace{-3pt} (J^{q})_{[(a,tX)]}\pi_{*}(0,X_{x})  = \pi_{*}(-X^{\tt l}_{a},0),\\[0.4pc]
(J^{q})_{[(a,tX)]}\pi_{*}((\xi^{s}_{\lambda})^{\tt l}_{a},0)  =  \pi_{*}(0,\frac{\lambda_{\mathbb R}(t)}{q_{\lambda}(t)}(\xi^{s}_{\lambda})_{tX}), & & \hspace{-3pt}(J^{q})_{[(a,tX)]}\pi_{*}(0,(\xi^{s}_{\lambda})_{tX})  = \pi_{*}(-\frac{q_{\lambda}(t)}{\lambda_{\mathbb R}(t)}(\xi^{s}_{\lambda})^{\tt l}_{a},0).
\end{array}
$$
Then, for each $(\xi,u)\in {\mathfrak m}\times{\mathfrak m},$
\begin{equation}\label{JqX}
\begin{array}{lcl}
(J^{q})_{[(a,tX)]}\pi_{*}(\xi^{\tt l}_{a},u_{tX}) & = & \pi_{*} (-\langle u,X\rangle X^{\tt l}_{a} - \sum_{\lambda\in \Sigma^{+}} \frac{q_{\lambda}(t)}{\lambda_{\mathbb R}(t)} \sum_{s=1}^{m_{\lambda}}\langle u,\xi^{s}_{\lambda}\rangle(\xi^{s}_{\lambda})^{\tt l}_{a},\\[0.4pc]
& & \hspace{1.5cm}\langle \xi,X\rangle X_{tX}+ \sum_{\lambda\in \Sigma^{+}}\frac{\lambda_{\mathbb R}(t)}{q_{\lambda}(t)}\sum_{s=1}^{m_{\lambda}}\langle\xi,\xi^{s}_{\lambda}\rangle(\xi^{s}_{\lambda})_{tX} ).
\end{array}
\end{equation}

On the other hand, 
$$
\begin{array}{lcl}
\pi_{*}(\xi^{\tt l}_{a},u_{x}) & = & \frac{d}{ds}_{\mid s = 0}\pi(a\exp s\xi, {\rm Ad}_{k}(tX + s{\rm Ad}_{k^{-1}}u))\\[0.4pc]
& = & \frac{d}{ds}_{\mid s = 0} \pi(a(\exp s\xi) k, tX + s{\rm Ad}_{k^{-1}}u) = \pi_{*}(({\rm Ad}_{k^{-1}}\xi)^{\tt l}_{ak},({\rm Ad}_{k^{-1}}u)_{tX}).
 \end{array}
 $$
 Hence, $J^{q}_{[(a,x)]}\pi_{*}(\xi^{\tt l}_{a},u_{x}) = J^{q}_{[(ak,tX)]}\pi_{*}(({\rm Ad}_{k^{-1}}\xi)^{\tt l}_{ak},({\rm Ad}_{k^{-1}}u)_{tX}).$ Now the result follows from (\ref{JqX}), taking into account that $\langle\cdot,\cdot\rangle$ is ${\rm Ad}(K)$-invariant.
\end{proof}

\begin{proposition}\label{TJc} The almost complex structure $J^{q}$ on $D^{+}(G/K)$ can be extended to a $G$-invariant almost complex structure on whole $T(G/K)$ if and only if 
\begin{equation}\label{limit}
0< \lim_{t\to 0^{+}}\frac{q(t)}{t}<+\infty.
\end{equation}
\end{proposition}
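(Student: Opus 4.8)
The plan is to transport the question to $G\times_{K}({\mathfrak m}\setminus\{0\})$, where $J^{q}$ is given explicitly by Lemma \ref{lJqX}, and to decide when that formula continues smoothly across the zero section $G\times_{K}\{0\}$. Because $J^{q}$ is $G$-invariant and the zero section is a single $G$-orbit, smoothness along the orbit directions is automatic, so it is enough to examine one fibre $\{[(e,x)]:x\in{\mathfrak m}\}$ as $x\to 0$. Along this fibre the horizontal and vertical lifts $\{\xi^{\tt h}_{[(e,x)]},\xi^{\tt v}_{[(e,x)]}\}_{\xi\in{\mathfrak m}}$, described by (\ref{vh}) and (\ref{verticallift}), depend smoothly on $x$ and still span the tangent space at $x=0$; hence the extension problem is exactly the problem of extending the matrix of $J^{q}$ in this frame to $x=0$.

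Next I would rewrite $J^{q}$ in that frame. Setting $x=t\,{\rm Ad}_{k}X$, Lemma \ref{lJqX} says $J^{q}(\xi^{\tt h})=(A_{x}^{-1}\xi)^{\tt v}$ and $J^{q}(\xi^{\tt v})=-(A_{x}\xi)^{\tt h}$, where $A_{x}$ is the ${\rm Ad}(H)$-equivariant self-adjoint positive endomorphism of ${\mathfrak m}$ equal to the identity on $\mathbb{R}x$ and to the scalar $q_{\lambda}(t)/\lambda_{\mathbb R}(t)$ on ${\rm Ad}_{k}{\mathfrak m}_{\lambda}$. The key observation is that $A_{x}$ is a function of the smooth (indeed polynomial) field $-{\rm ad}^{2}_{x}$: by (\ref{eq.ms4.3}) the eigenvalues of $-{\rm ad}^{2}_{x}$ on ${\mathfrak m}$ are $0$ on $\mathbb{R}x$ and $\lambda_{\mathbb R}(t)^{2}$ on ${\rm Ad}_{k}{\mathfrak m}_{\lambda}$, so $A_{x}=h(-{\rm ad}^{2}_{x})$ with $h(\mu)=q(\sqrt{\mu})/\sqrt{\mu}$ for $\mu>0$ (using $\varepsilon_{\mathbb R}(t)=t$). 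This reduces the whole tensorial question to a scalar one about the propagation of $h$ near $\mu=0$.

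For the forward implication I would use that a continuous $G$-invariant almost complex structure restricts at the zero section to an invertible endomorphism; hence, as $t\to 0^{+}$, the eigenvalues $q_{\lambda}(t)/\lambda_{\mathbb R}(t)$ of $A_{x}$ and their reciprocals must converge to finite, strictly positive numbers. Since $q_{\varepsilon}(t)/\varepsilon_{\mathbb R}(t)=q(t)/t$ and (when $\varepsilon/2\in\Sigma$) $q_{\varepsilon/2}(t)/(\varepsilon/2)_{\mathbb R}(t)=q(t/2)/(t/2)$, all these limits reduce to $\lim_{t\to 0^{+}}q(t)/t$, which therefore lies in $(0,+\infty)$; this is (\ref{limit}). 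Conversely, assuming (\ref{limit}), I would set $L=\lim_{t\to 0^{+}}q(t)/t$, check that $h$ and $1/h$ extend continuously to $\mu=0$, and then recover the extension of $J^{q}$ from the functional calculus $A_{x}=h(-{\rm ad}^{2}_{x})$, so that the resulting $G$-invariant $(1,1)$-tensor still squares to $-{\rm I}$.

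The step I expect to be the real obstacle is this last smoothness verification, because of a tension built into $A_{x}$: on the radial eigenline $\mathbb{R}x$ the eigenvalue is identically $1$, while on the transverse part it tends to $L$ as $x\to 0$, so the continuation is governed by the match between $h(0)$ and the value of $A_{x}$ on $\ker{\rm ad}_{x}$. Turning continuity into smoothness therefore requires a careful analysis of $q(t)/t$ as $t\to0^{+}$ through the functional calculus of $-{\rm ad}^{2}_{x}$, and in the cases with $\varepsilon/2\in\Sigma$ it also requires the bracket relations (\ref{brack1}) and Lemma \ref{pbrack} to ensure the ${\rm Ad}(H)$-equivariant continuation is simultaneously consistent on ${\mathfrak m}_{\varepsilon}$ and ${\mathfrak m}_{\varepsilon/2}$. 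This is where the quantitative control provided by condition (\ref{limit}) is indispensable.
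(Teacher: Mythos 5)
Your proposal follows essentially the same route as the paper: both reduce the question via Lemma \ref{lJqX} to the behaviour of the coefficients $q_{\lambda}(t)/\lambda_{\mathbb R}(t)$ and their reciprocals as $t\to 0^{+}$, and both identify $0<\lim_{t\to 0^{+}}q(t)/t<+\infty$ as the exact condition. The paper's own proof is just this one-line observation; the functional-calculus reformulation $A_{x}=h(-{\rm ad}^{2}_{x})$ and the closing smoothness discussion you flag as the ``real obstacle'' are refinements that the paper does not carry out.
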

 \begin{proof} From Lemma \ref{lJqX}, since ${\rm Ad}(K)({\mathbb R}^{+}) = {\mathfrak m}\setminus\{0\},$ the existence of an extension of $J^{q}$ is determined by the existence with (finite) positive value of $\lim_{t\to 0}\frac{q_{\lambda}(t)}{\lambda_{\mathbb R}(t)} = \lim_{t\to 0}\frac{q(t)}{t}.$
\end{proof}
From (\ref{gg}) and (\ref{Jq}), the following is immediate.
\begin{lemma}\label{lHermitian} The metric ${\mathbf g}$ on $G/H\times {\mathbb R}^{+}$ is Hermitian with respect to $J^{q}$ if and only if $a = b$ and $b_{\lambda} = q^{2}_{\lambda}\cdot a_{\lambda},$ for $\lambda\in \Sigma^{+}.$
\end{lemma}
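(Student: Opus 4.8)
The plan is to use $G$-invariance to reduce the Hermitian identity $\mathbf{g}(J^{q}\,\cdot\,,J^{q}\,\cdot\,)=\mathbf{g}(\cdot\,,\cdot\,)$ to a pointwise check on the basis (\ref{basis}) at each $(o_{H},t)$, and then to read off the conditions block by block. First I would record, from (\ref{gg}), that $\mathbf{g}_{(o_{H},t)}$ is diagonal in the basis (\ref{basis}); thus $T_{(o_{H},t)}(G/H\times{\mathbb R}^{+})\cong\overline{\mathfrak m}\times T_{t}{\mathbb R}$ splits $\mathbf{g}$-orthogonally into the plane $P_{0}={\mathbb R}(X,0)\oplus{\mathbb R}(0,\frac{\partial}{\partial t})$ and the planes $P_{\lambda}^{s}={\mathbb R}(\xi^{s}_{\lambda},0)\oplus{\mathbb R}(\zeta^{s}_{\lambda},0)$, $\lambda\in\Sigma^{+}$, $s=1,\dots,m_{\lambda}$. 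The structural fact that makes the proof short, visible directly from (\ref{Jq}), is that $J^{q}_{(o_{H},t)}$ maps each of these planes to itself. Consequently, for $u,v$ lying in two distinct planes both $\mathbf{g}(J^{q}u,J^{q}v)$ and $\mathbf{g}(u,v)$ vanish, so the identity is automatic on cross-terms and it suffices to test it within each plane.

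Next I would carry out the two planar computations. On $P_{0}$, formula (\ref{Jq}) gives $J^{q}(X,0)=(0,\frac{\partial}{\partial t})$ and $J^{q}(0,\frac{\partial}{\partial t})=-(X,0)$, whence $\mathbf{g}(J^{q}(X,0),J^{q}(X,0))=b^{2}(t)$ and $\mathbf{g}(J^{q}(0,\frac{\partial}{\partial t}),J^{q}(0,\frac{\partial}{\partial t}))=a^{2}(t)$, while the mixed term stays $0$; comparing with $\mathbf{g}((X,0),(X,0))=a^{2}(t)$ and $\mathbf{g}((0,\frac{\partial}{\partial t}),(0,\frac{\partial}{\partial t}))=b^{2}(t)$ forces $a^{2}=b^{2}$, that is $a=b$, both functions being positive. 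On each $P_{\lambda}^{s}$, (\ref{Jq}) gives $J^{q}(\xi^{s}_{\lambda},0)=-q_{\lambda}^{-1}(\zeta^{s}_{\lambda},0)$ and $J^{q}(\zeta^{s}_{\lambda},0)=q_{\lambda}(\xi^{s}_{\lambda},0)$, so that $\mathbf{g}(J^{q}(\xi^{s}_{\lambda},0),J^{q}(\xi^{s}_{\lambda},0))=q_{\lambda}^{-2}b_{\lambda}$ and $\mathbf{g}(J^{q}(\zeta^{s}_{\lambda},0),J^{q}(\zeta^{s}_{\lambda},0))=q_{\lambda}^{2}a_{\lambda}$, the mixed term again vanishing; comparing with $a_{\lambda}$ and with $b_{\lambda}$ respectively yields the single equation $b_{\lambda}=q_{\lambda}^{2}a_{\lambda}$.

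Finally, the converse is immediate: when $a=b$ and $b_{\lambda}=q_{\lambda}^{2}a_{\lambda}$ hold for every $\lambda\in\Sigma^{+}$, the same computations show that $\mathbf{g}(J^{q}\,\cdot\,,J^{q}\,\cdot\,)$ and $\mathbf{g}(\cdot\,,\cdot\,)$ agree on every pair of basis vectors, hence on all of $T_{(o_{H},t)}(G/H\times{\mathbb R}^{+})$, and by $G$-invariance at every point. There is no genuine obstacle here; the entire content is the bookkeeping observation that $J^{q}$ respects the orthogonal block decomposition, so that only the two $2\times 2$ blocks need to be examined. The only points deserving a word of care are that testing the Hermitian relation on both vectors of a given plane returns the same scalar equation, so the system is not over-determined, and that the skew terms $\mathbf{g}(J^{q}(\xi^{s}_{\lambda},0),J^{q}(\zeta^{s}_{\lambda},0))$ reproduce the vanishing off-diagonal entries; both follow at once from the antisymmetric shape of (\ref{Jq}).
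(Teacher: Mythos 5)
Your verification is correct and is exactly the direct check the paper has in mind: the paper offers no written proof, stating only that the lemma is ``immediate'' from (\ref{gg}) and (\ref{Jq}), and your block-by-block computation (with the observation that $J^{q}$ preserves the orthogonal planes ${\mathbb R}(X,0)\oplus{\mathbb R}(0,\frac{\partial}{\partial t})$ and ${\mathbb R}(\xi^{s}_{\lambda},0)\oplus{\mathbb R}(\zeta^{s}_{\lambda},0)$, so only the diagonal conditions $a^{2}=b^{2}$ and $q_{\lambda}^{-2}b_{\lambda}=a_{\lambda}$ survive) is precisely that omitted argument. Nothing is missing.
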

Moreover, the almost contact metric structure on $G/H\cong G/H\times \{r\},$ induced from the Hermitian structure $(J^{q},{\mathbf g}),$ is the $G$-invariant structure $(\varphi^{q},\frac{1}{a(r)}\xi,a(r)\eta,\tilde{\mathbf g}= (\iota_{r})^{*}{\mathbf g}),$ such that at $o_{H}$ and with respect to the basis $\{X,\xi^{s}_{\lambda},\zeta^{s}_{\lambda};\;s = 1,\dots, m_{\lambda},\lambda\in \Sigma^{+}\}$ of $\overline{\mathfrak m},$ 
 \begin{equation}\label{varphigg} 
 \begin{array}{l}
\varphi^{q}_{o_{H}}\xi^{s}_{\lambda} = -\frac{1}{q_{\lambda}(r)}\zeta^{s}_{\lambda},\quad \varphi^{q}_{o_{H}}\zeta^{s}_{\lambda} = q_{\lambda}(r)\xi^{s}_{\lambda},\quad \tilde{\mathbf g}_{o_{H}} = \tilde{\mathbf g}_{o_{H}}^{(a^{2}(r);a_{\lambda}(r);(q_{\lambda}(r))^{2}a_{\lambda}(r))}.
\end{array}
\end{equation}
\begin{proposition}\label{pcharac} $(\varphi^{q},\frac{1}{a(r)}\xi,a(r)\eta,\tilde{\mathbf g})$ on $T_{r}(G/K),$ for each $r>0,$ is contact metric if and only if
\[
a_{\lambda} (r)= \frac{a(r)\lambda_{\mathbb R}(r)}{2rq_{\lambda}(r)}.
\]
Moreover, it is $K$-contact if and only if $q_{\lambda}(r) = 1,$ for all $\lambda\in \Sigma^{+}.$
 \end{proposition}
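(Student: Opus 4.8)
The plan is to verify the contact metric condition $d\hat\eta(Y,Z)=\tilde{\mathbf g}(Y,\varphi^{q}Z)$ pointwise at the origin $o_{H}$, where $\hat\eta=a(r)\eta$ and $\hat\xi=\frac{1}{a(r)}\xi$ are the characteristic $1$-form and vector field of the induced structure, and then to read off the Killing condition for $\hat\xi$ from Proposition \ref{pproperties}. The structure is already almost contact metric (it is induced from the Hermitian pair $(J^{q},{\mathbf g})$), so only the identity $d\hat\eta=\Phi^{q}$ with $\Phi^{q}:=\tilde{\mathbf g}(\cdot,\varphi^{q}\cdot)$ must be checked. Since both sides are $G$-invariant $2$-forms, it suffices to compare them at $o_{H}$ on the pairs of the basis $\{X,\xi^{s}_{\lambda},\zeta^{s}_{\lambda}\}$ of $\overline{\mathfrak m}$.

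First I would compute $d\hat\eta$ at $o_{H}$. Taking the $\tau$-extensions of $Y,Z\in\overline{\mathfrak m}$ and using that $\hat\eta$ is $G$-invariant, the functions $\hat\eta(Y^{\tau}),\hat\eta(Z^{\tau})$ are constant near $o_{H}$, so with Blair's convention $2d\hat\eta(Y,Z)=Y\hat\eta(Z)-Z\hat\eta(Y)-\hat\eta([Y,Z])$ the first two terms drop and, using $[Y^{\tau},Z^{\tau}]_{o_{H}}=[Y,Z]_{\overline{\mathfrak m}}$, one gets $d\hat\eta(Y,Z)_{o_{H}}=-\tfrac12\hat\eta_{o_{H}}([Y,Z]_{\overline{\mathfrak m}})$. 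As $\hat\eta_{o_{H}}=a(r)\langle X,\cdot\rangle$ and $X\in{\mathfrak a}\perp{\mathfrak h}$, this equals $-\tfrac12 a(r)\langle X,[Y,Z]\rangle$, and by ${\rm Ad}(G)$-invariance $\langle X,[Y,Z]\rangle=\langle[X,Y],Z\rangle$. The bracket relations (\ref{brack1}) show $[Y,Z]$ has a nonzero ${\mathfrak a}$-component only when $\{Y,Z\}$ is one vector from ${\mathfrak m}_{\lambda}$ and one from ${\mathfrak k}_{\lambda}$; applying (\ref{eq.ms4.3}) with $u=X$ gives $[X,\xi^{s}_{\lambda}]=-\lambda_{\mathbb R}(X)\zeta^{s}_{\lambda}$, whence $d\hat\eta(\xi^{s}_{\lambda},\zeta^{t}_{\lambda})_{o_{H}}=\tfrac12 a(r)\lambda_{\mathbb R}(X)\delta_{st}$, and $d\hat\eta$ vanishes on every remaining basis pair.

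Next I would compute $\Phi^{q}$ from (\ref{varphigg}) and (\ref{gg}). Because $\varphi^{q}$ interchanges ${\mathfrak m}_{\lambda}$ and ${\mathfrak k}_{\lambda}$, annihilates $X$, and $\tilde{\mathbf g}$ is diagonal in the chosen basis, $\Phi^{q}$ is supported on exactly the same pairs, with $\Phi^{q}(\xi^{s}_{\lambda},\zeta^{t}_{\lambda})=\tilde{\mathbf g}(\xi^{s}_{\lambda},q_{\lambda}(r)\xi^{t}_{\lambda})=q_{\lambda}(r)a_{\lambda}(r)\delta_{st}$. Equating the two antisymmetric forms thus reduces to the single family of identities $\tfrac12 a(r)\lambda_{\mathbb R}(X)=q_{\lambda}(r)a_{\lambda}(r)$; since $\lambda_{\mathbb R}(r)=r\,\lambda_{\mathbb R}(X)$, this is exactly $a_{\lambda}(r)=\frac{a(r)\lambda_{\mathbb R}(r)}{2rq_{\lambda}(r)}$, which proves the first assertion. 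For the $K$-contact statement, the characteristic field $\hat\xi=\frac{1}{a(r)}\xi$ is a constant multiple of the invariant field $X$, so by Proposition \ref{pproperties}(i) it is Killing if and only if $a_{\lambda}(r)=b_{\lambda}(r)$ for all $\lambda\in\Sigma^{+}$; since (\ref{varphigg}) gives $b_{\lambda}(r)=q_{\lambda}(r)^{2}a_{\lambda}(r)$ with $a_{\lambda}(r),q_{\lambda}(r)>0$, this holds if and only if $q_{\lambda}(r)=1$.

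The main obstacle will be the bookkeeping in the comparison: one must check that $d\hat\eta$ and $\Phi^{q}$ have precisely the same support among basis pairs, which rests on a careful reading of (\ref{brack1}) (in particular that the pairs $({\mathfrak m}_{\varepsilon},{\mathfrak m}_{\varepsilon})$, $({\mathfrak k}_{\varepsilon},{\mathfrak k}_{\varepsilon})$ and the various cross pairs contribute no ${\mathfrak a}$-component, while $[{\mathfrak m}_{\varepsilon/2},{\mathfrak k}_{\varepsilon/2}]$ does through its ${\mathfrak a}$-part, its ${\mathfrak m}_{\varepsilon}$-part being invisible to the pairing with $X$). A secondary point requiring care is the normalization: tracking the factor $\tfrac12$ in Blair's exterior-derivative convention together with the distinction between $\lambda_{\mathbb R}(X)$ and $\lambda_{\mathbb R}(r)=r\,\lambda_{\mathbb R}(X)$ is exactly what produces the factor $2r$ in the denominator of the stated formula.
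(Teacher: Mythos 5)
Your proposal is correct and follows essentially the same route as the paper: compute $d\eta$ at $o_{H}$ via $d\eta_{o_{H}}(u,v)=-\tfrac12\eta_{o_{H}}([u,v]_{\overline{\mathfrak m}})=-\tfrac12\langle[X,u],v\rangle$ together with (\ref{eq.ms4.3}), compare with the fundamental form $\Phi(\xi^{s}_{\lambda},\zeta^{s}_{\lambda})=q_{\lambda}(r)a_{\lambda}(r)$ from (\ref{varphigg}), and deduce the $K$-contact criterion from Proposition \ref{pproperties}(i) and $b_{\lambda}=q_{\lambda}^{2}a_{\lambda}$. Your extra bookkeeping (matching the supports of the two $2$-forms via (\ref{brack1}) and tracking $\lambda_{\mathbb R}(r)=r\lambda_{\mathbb R}(X)$) only makes explicit what the paper leaves implicit.
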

\begin{proof} For $u,v\in \overline{\mathfrak m},$ $d\eta_{o_{H}}(u,v) = -\frac{1}{2}\eta_{o_{H}}([u,v]_{\overline{\mathfrak m}}) = -\frac{1}{2}\langle [X,u],v\rangle.$ Then, using (\ref{eq.ms4.3}), 
\[
d\eta_{o_{H}}(\xi^{s}_{\lambda},\zeta^{s}_{\lambda}) = \frac{\lambda_{\mathbb R}(r)}{2r},\quad s=1,\dots ,m_{\lambda},\;\lambda\in \Sigma^{+},
\]
being zero the rest of components of $d\eta_{o_{H}}.$ On the other hand, from (\ref{varphigg}), the fundamental $2$-form $\Phi$ of $(\varphi^{q},\frac{1}{a(r)}\xi,a(r)\eta,\tilde{\mathbf g})$ satisfies $\Phi_{o_{H}}(\xi^{s}_{\lambda},\zeta^{s}_{\lambda}) = q_{\lambda}(r)a_{\lambda}(r).$ Hence, using Proposition \ref{pproperties} (i), the result is proved.
\end{proof}
As a consequence from Propositions \ref{pstandard} and \ref{pcharac}, we have the following version of Tashiro's Theorem \cite{Tash} for tangent sphere bundles of {\em any} radius.
 \begin{corollary}\label{Tashiro} The standard almost contact metric structure $(\varphi,\xi,\eta,\tilde{g}^{S})$ on $T_{r}(G/K)$ is contact metric if and only if $r = \frac{1}{2}.$ Moreover, for all $r>0,$ the rectified almost contact metric structure $(\varphi' = \varphi,\xi' = 2r\xi, \eta' = \frac{1}{2r}\eta,\widetilde{\mathbf g}' = \frac{1}{4r^{2}}\tilde{g}^{S})$ is always contact metric and it is $K$-contact if and only if $r = 1$ and $G/K = {\mathbb S}^{n}$ or ${\mathbb R}{\mathbf P}^{n};$ in that case $(\varphi',\xi',\eta',\widetilde{\mathbf g}')$ is Sasakian.
  \end{corollary}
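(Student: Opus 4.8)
The plan is to obtain both the contact-metric and the $K$-contact assertions as direct substitutions into Proposition~\ref{pcharac}, and to bring in Tashiro's theorem only for the final normality claim. The key initial observation is that the standard structure $(\varphi,\xi,\eta,\tilde{g}^{S})$ of Proposition~\ref{pstandard} is exactly the structure $(\varphi^{q},\tfrac{1}{a(r)}\xi,a(r)\eta,\tilde{\mathbf g})$ of (\ref{varphigg}) for the choice $q=\mathrm{id}$ (so that $q_{\lambda}(r)=\lambda_{\mathbb R}(r)$), $a(r)=1$ and $a_{\lambda}(r)=1$ for every $\lambda\in\Sigma^{+}$: with these values $\varphi^{q}_{o_{H}}\zeta^{s}_{\lambda}=\lambda_{\mathbb R}(r)\xi^{s}_{\lambda}$ and $\tilde{\mathbf g}_{o_{H}}=\tilde{\mathbf g}^{(1;1;\lambda^{2}_{\mathbb R}(r))}_{o_{H}}$ reproduce Proposition~\ref{pstandard}. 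Feeding them into the contact-metric criterion $a_{\lambda}(r)=a(r)\lambda_{\mathbb R}(r)/(2r\,q_{\lambda}(r))$ of Proposition~\ref{pcharac}, every factor $\lambda_{\mathbb R}(r)$ cancels and the criterion collapses to the single, $\lambda$-independent equation $1=1/(2r)$. Hence the standard structure is contact metric exactly when $r=\tfrac12$, which is the first assertion.

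For the rectified structure I would first record that $(\varphi'=\varphi,\ \xi'=2r\xi,\ \eta'=\tfrac{1}{2r}\eta,\ \widetilde{\mathbf g}'=\tfrac{1}{4r^{2}}\tilde{g}^{S})$ is again an almost contact metric structure: the rescalings $\eta\mapsto c\eta$, $\xi\mapsto c^{-1}\xi$, $g\mapsto c^{2}g$ with $c=\tfrac{1}{2r}$ preserve both $\varphi^{2}=-\mathrm I+\eta\otimes\xi$ and the compatibility relation, since $\eta'\otimes\xi'=\eta\otimes\xi$ and $\eta'\otimes\eta'=c^{2}\eta\otimes\eta$. Next I would identify it once more as an instance of (\ref{varphigg}): rescaling $\tilde{g}^{S}$ by $1/(4r^{2})$ and comparing with (\ref{Bb}) yields $a(r)=\tfrac{1}{2r}$ and $a_{\lambda}(r)=\tfrac{1}{4r^{2}}$, while the Hermitian constraint $b_{\lambda}=q^{2}_{\lambda}a_{\lambda}$ of Lemma~\ref{lHermitian} is checked from $b_{\varepsilon}=\tfrac14=r^{2}\cdot\tfrac{1}{4r^{2}}$ and $b_{\varepsilon/2}=\tfrac{1}{16}=\tfrac{r^{2}}{4}\cdot\tfrac{1}{4r^{2}}$, still with $q=\mathrm{id}$. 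Proposition~\ref{pcharac} then applies with these parameters: the contact-metric criterion becomes $\tfrac{1}{4r^{2}}=\big(\tfrac{1}{2r}\lambda_{\mathbb R}(r)\big)/\big(2r\lambda_{\mathbb R}(r)\big)=\tfrac{1}{4r^{2}}$, an identity for all $r>0$, so the rectified structure is contact metric at every radius. The reason, which I would state explicitly, is that for the standard structure $d\eta_{o_{H}}(\xi^{s}_{\lambda},\zeta^{s}_{\lambda})=\lambda_{\mathbb R}(r)/(2r)$ while the fundamental form satisfies $\Phi_{o_{H}}(\xi^{s}_{\lambda},\zeta^{s}_{\lambda})=\lambda_{\mathbb R}(r)$; thus $d\eta=\tfrac{1}{2r}\Phi$ on all of $\overline{\mathfrak m}$ with a constant ratio, and the rescaling by $c=\tfrac{1}{2r}$ is precisely what converts this into $d\eta'=\Phi'$.

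The $K$-contact statement I would read off from the second clause of Proposition~\ref{pcharac}, which requires $q_{\lambda}(r)=\lambda_{\mathbb R}(r)=1$ for every $\lambda\in\Sigma^{+}$. For $\mathbb S^{n}$ and $\mathbb R\mathbf P^{n}$ one has $\Sigma^{+}=\{\varepsilon\}$ and $\varepsilon_{\mathbb R}(r)=r$, so the unique requirement is $r=1$. For $\mathbb C\mathbf P^{n}$, $\mathbb H\mathbf P^{n}$ and $\mathbb{C}a\mathbf{P}^{2}$ one has $\Sigma^{+}=\{\varepsilon,\varepsilon/2\}$ with $\varepsilon_{\mathbb R}(r)=r$ and $(\varepsilon/2)_{\mathbb R}(r)=r/2$, forcing the incompatible conditions $r=1$ and $r=2$; hence no radius yields a $K$-contact structure there. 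This establishes the $K$-contact characterization.

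It remains to prove that the structure obtained for $r=1$ over $\mathbb S^{n}$ or $\mathbb R\mathbf P^{n}$ is in fact Sasakian, and this is the only step not supplied by Propositions~\ref{pstandard} and \ref{pcharac}, which yield the contact metric property and the fact that $\xi'$ is Killing but say nothing about normality. Here the rectified structure is exactly the classical pair $(\tfrac12\eta,\tfrac14\tilde{g}^{S})$ on the unit tangent bundle. I would observe that, with the normalization $\langle X,X\rangle=1$ and $\varepsilon_{\mathbb R}(X)=1$, these two bases are precisely the rank-one symmetric spaces of constant sectional curvature $1$: indeed (\ref{eq.ms4.3}) gives $[X,\xi_{\varepsilon}]=-\zeta_{\varepsilon}$, so $\|[X,\xi_{\varepsilon}]\|=1$ and every plane through $X$ has curvature $1$. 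Tashiro's theorem \cite{Tash} for $T_{1}M$ over a base of constant curvature $1$ then yields that $(\tfrac12\eta,\tfrac14\tilde{g}^{S})$ is Sasakian; the same conclusion is recovered a posteriori as the case $\kappa=\tfrac12$ of Theorem~\ref{main}. I expect this normality step to be the genuine difficulty of the corollary, all the preceding parts being mechanical substitutions into Proposition~\ref{pcharac}.
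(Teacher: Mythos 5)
Your proposal is correct and follows exactly the route the paper intends: the paper states the corollary as an immediate consequence of Propositions \ref{pstandard} and \ref{pcharac}, and your substitutions ($q=\mathrm{id}$, $a=1$, $a_{\lambda}=1$ for the standard structure; $a=\frac{1}{2r}$, $a_{\lambda}=\frac{1}{4r^{2}}$ for the rectified one) are precisely the computations left implicit there, with the $K$-contact dichotomy coming from $q_{\varepsilon}(r)=r$ versus $q_{\varepsilon/2}(r)=r/2$. The final Sasakian claim is likewise handled as the paper does, via Tashiro's theorem for the constant-curvature-$1$ bases (equivalently the case $\kappa=\tfrac12$ of Theorem \ref{main}).
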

\begin{remark} {\rm $({\mathbb S}^{n},{\mathbf g})$ and $({\mathbb R}{\mathbf P}^{n},{\mathbf g}),$ described in {\bf Case 1}, have constant curvature $1.$}
\end{remark}

 We conclude with the proof of our main result.
 
 \noindent {\bf Proof of Theorem \ref{main}.} Denote by $J^{1}$ the almost complex structure on $D^{+}(G/K)$ such that $J^{1}_{(o_{H},t)} = {\mathcal J}^{q}_{t},$ for all $t\in {\mathbb R}^{+},$ $q$ being the constant map $q = 1.$ Note that, from Proposition \ref{TJc}, such a structure cannot be extended to an almost complex structure on whole $T(G/K).$ 

For each smooth function $f\colon {\mathbb R}^{+}\to {\mathbb R}^{+},$ consider the compatible $G$-invariant metric ${\mathbf g}^{f}$ to $J^{1},$ such that at $(o_{H},t)\in G/H\times {\mathbb R}^{+}$ it is given as in (\ref{gg}) for $a = b = f$ and $a_{\lambda}(t) = b_{\lambda}(t) =\frac{f(t)\lambda_{\mathbb R}(t)}{2t},$ for all $t\in {\mathbb R}^{+}$ and $\lambda\in \Sigma^{+},$ i.e.
$$
\begin{array}{l}
{\mathbf g}^{f}_{(o_{H},t)}((X,0),(X,0)) = {\mathbf g}^{f}_{(o_{H},t)}((0,\frac{\partial}{\partial t}),(0,\frac{\partial}{\partial t})) = f^{2}(t),\\[0.4pc]
{\mathbf g}^{f}_{(o_{H},t)}((\xi^{s}_{\lambda},0),(\xi^{s}_{\lambda},0)) = {\mathbf g}^{f}_{(o_{H},t)}((\zeta^{s}_{\lambda},0),(\zeta^{s}_{\lambda},0)) = \frac{f(t)\lambda_{\mathbb R}(t)}{2t}, \quad s = 1,\dots, m_{\lambda},\;\lambda\in \Sigma^{+},
\end{array}
$$
 being zero for the rest of components. The induced metric $\tilde{\mathbf g}^{\kappa},$ $\kappa = f(r),$ from ${\mathbf g}^{f}$ on $T_{r}(G/K) = G/H,$ for each $r>0,$ is the $G$-invariant metric satisfying (\ref{fmain}) and, from Pro\-po\-si\-tion \ref{pcharac}, the pair $(\kappa^{-1}\xi,\tilde{\mathbf g}^{\kappa})$ is a $K$-contact structure.
 
 Next, we show that $(\kappa^{-1}\xi,\tilde{\mathbf g}^{\kappa})$ is the unique $G$-invariant $K$-contact structure on $T_{r}(G/K)$ whose characteristic vector field is $\kappa^{-1}\xi.$ Let $\tilde{\mathbf g} = \tilde{\mathbf g}^{(a;a_{\lambda};b_{\lambda})}$ be a $G$-invariant Riemannian metric as in (\ref{Bb}) such that $(\kappa^{-1}\xi,\tilde{\mathbf g})$ is $K$-contact. Then, $a = \kappa$ and, from Proposition \ref{pproperties}, $b_{\lambda} = a_{\lambda},$ for all $\lambda\in \Sigma^{+}.$ Moreover there exists a $G$-invariant $(1,1)$-tensor field $\varphi$ such that $(\varphi,\kappa^{-1}\xi,\kappa\eta,\tilde{\mathbf g})$ is an almost contact metric structure verifying
 \[
 \tilde{\mathbf g}_{o_{H}}(\varphi u,v) = \kappa d\eta_{o_{H}}(u,v) =-\frac{\kappa}{2}\langle[X,u],v\rangle,
 \]
 for all $u,v\in \overline{\mathfrak m}.$ Hence, using (\ref{eq.ms4.3}), $\varphi\xi^{s}_{\lambda} = \frac{\kappa\lambda_{\mathbb R}(r)}{2ra_{\lambda}}\zeta^{s}_{\lambda},$ for all $s = 1,\dots ,m_{\lambda},$ $\lambda\in \Sigma^{+}.$ Then, $a_{\lambda} = \frac{\kappa\lambda_{\mathbb R}(r)}{2r},$ because $\tilde{\mathbf g}$ is a $(\varphi,\kappa^{-1}\xi,\kappa\eta)$-compatible metric. Therefore, $\tilde{\mathbf g} = \tilde{\mathbf g}^{\kappa}$ and $\varphi = \varphi^{1}.$ Applying (\ref{eq.ms4.3}) in (\ref{varphigg}), $\varphi^{1}$ satisfies
  \begin{equation}\label{varphiad}
{\varphi^{1}_{o_{H}}}_{\mid {\mathfrak m}_{\lambda}\oplus {\mathfrak k}_{\lambda}} = \frac{1}{\lambda_{\mathbb R}(X)}{\rm ad}_{X},\quad \lambda\in \Sigma^{+}.
\end{equation}

 Now, we only need to show that $(\varphi^{1},\kappa^{-1}\xi,\kappa\eta,\tilde{\mathbf g}^{\kappa})$ is normal.  By definition (\ref{Nijenhuis}), the $G$-invariant  tensor field ${\mathbf N} = [\varphi^{1},\varphi^{1}] + 2d\eta\otimes\xi$ on $G/H$ is given by 
\begin{equation}\label{NoH}
{\mathbf N}_{o_{H}}(u,v) = -[u,v]_{\overline{\mathfrak m}} + [\varphi^{1}_{o_{H}}u,\varphi^{1}_{o_{H}}v]_{\overline{\mathfrak m}} - \varphi^{1}_{o_{H}}[\varphi^{1}_{o_{H}}u,v]_{\overline{\mathfrak m}} - \varphi^{1}_{o_{H}}[u,\varphi^{1}_{o_{H}}v]_{\overline{\mathfrak m}},
\end{equation}
for all $u,v\in \overline{\mathfrak m}.$ Hence, using that by application of (\ref{varphiad}) we have 
\[{\rm ad}_{X}\circ\varphi^{1}_{o_{H}} = \varphi^{1}_{o_{H}}\circ {\rm ad}_{X},
\]
 we obtain that ${\mathbf N}^{1}_{o_{H}}(X,u) = 0,$ for all $u\in \overline{\mathfrak m},$ and 
\begin{equation}\label{Narphi}
{\mathbf N}_{o_{H}}(u,\varphi^{1}_{o_{H}} v) = {\mathbf N}_{o_{H}}(\varphi^{1}_{o_{H}} u,v),\quad u,v\in \overline{\mathfrak m}.
\end{equation}

If $u,v\in {\mathfrak m}_{\varepsilon}\oplus {\mathfrak k}_{\varepsilon}$ then, from (\ref{brack1}), $[u,v]_{\overline{\mathfrak m}}\in {\mathfrak a}$ and so, $\varphi^{1}_{o_{H}}[u,v]_{\overline{\mathfrak m}} = 0.$ Using (\ref{varphiad}), the Jacobi identity and taking into account that ${\rm ad}_{X}\overline{\mathfrak m}\subset\overline{\mathfrak m},$ it follows that
\begin{equation}\label{sor}
[\varphi^{1}_{o_{H}}u,v]_{\overline{\mathfrak m}} + [u,\varphi^{1}_{o_{H}}v]_{\overline{\mathfrak m}} = 0,\quad [\varphi^{1}_{o_{H}}u, \varphi^{1}_{o_{H}}v]_{\overline{\mathfrak m}} = [u,v]_{\overline{\mathfrak m}}.
\end{equation}
Hence (\ref{NoH}) implies that ${\mathbf N}_{o_{H}}(u,v) =0.$

If $u,v\in {\mathfrak m}_{\varepsilon/2} \oplus {\mathfrak k}_{\varepsilon/2},$ from (\ref{brack1}), $[u,v]_{\overline{\mathfrak m}}\in {\mathfrak a}\oplus{\mathfrak m}_{\varepsilon}\oplus {\mathfrak k}_{\varepsilon}.$ Then, using (\ref{varphiad}) and the Jacobi identity,
\[
\varphi^{1}_{o_{H}}[u,v]_{\overline{\mathfrak m}} = \varphi^{1}_{o_{H}}[u,v]_{{\mathfrak m}_{\varepsilon}\oplus {\mathfrak k}_{\varepsilon}} = \frac{1}{2}([\varphi^{1}_{o_{H}}u,v]_{\overline{\mathfrak m}} + [u,\varphi^{1}_{o_{H}}v]_{\overline{\mathfrak m}}).
\]
Therefore, $[u,v]_{{\mathfrak m}_{\varepsilon} \oplus {\mathfrak k}_{\varepsilon}} = -\frac{1}{2}\varphi^{1}_{o_{H}}([\varphi^{1}_{o_{H}}u,v]_{\overline{\mathfrak m}} + [u,\varphi^{1}_{o_{H}}v]_{\overline{\mathfrak m}})$ and we get
\begin{equation}\label{sorpresa}
[u,v]_{{\mathfrak m}_{\varepsilon} \oplus {\mathfrak k}_{\varepsilon}} = -[\varphi^{1}_{o_{H}}u,\varphi^{1}_{o_{H}}v]_{{\mathfrak m}{\varepsilon} \oplus {\mathfrak k}_{\varepsilon}},\quad [\varphi^{1}_{o_{H}}u,v]_{{\mathfrak m}_{\varepsilon}\oplus {\mathfrak k}_{\varepsilon}}= [u, \varphi^{1}_{o_{H}}v]_{{\mathfrak m}_{\varepsilon}\oplus {\mathfrak k}_{\varepsilon}}.
\end{equation}
Because, applying (\ref{varphiad}) and (\ref{NoH}), 
\[
\langle{\mathbf N}_{o_{H}}(u,v),X\rangle = \langle u,{\rm ad}_{X}v\rangle - \langle\varphi^{1}_{o_{H}}u,{\rm ad}_{X}\varphi^{1}_{o_{H}}v\rangle = 0,
\]
 we have ${\mathbf N}(u,v) = {\mathbf N}(u,v)_{\mid{\mathfrak m}_{\varepsilon}\oplus {\mathfrak k}_{\varepsilon}} = 0.$

 Finally, consider the case $u\in {\mathfrak m}_{\varepsilon}\oplus {\mathfrak k}_{\varepsilon}$ and $v\in{\mathfrak m}_{\varepsilon/2}\oplus {\mathfrak k}_{\varepsilon/2}$ or, equivalently using (\ref{Narphi}), the case $u\in {\mathfrak m}_{\varepsilon}\oplus {\mathfrak k}_{\varepsilon}$ and $v\in{\mathfrak m}_{\varepsilon/2}.$ From (\ref{brack1}), $[u,v]\in {\mathfrak k}_{\varepsilon/2}\oplus {\mathfrak m}_{\varepsilon/2}$ and then, 
 \[
 \varphi^{1}_{o_{H}}[u,v] = 2{\rm ad}_{X}[u,v].
 \]
 Applying again the Jacobi identity, we obtain $[u,v] = -\varphi^{1}_{o_{H}}(2[\varphi^{1}_{o_{H}}u,v] + [u,\varphi^{1}_{o_{H}}v]).$ Hence, (\ref{NoH}) can be expressed as 
 \[
 {\mathbf N}_{o_{H}}(u,v) = \varphi^{1}_{o_{H}}([\varphi^{1}_{o_{H}}u,v] + [u,\varphi^{1}_{o_{H}}v]).\]
 Therefore, we have ${\mathbf N}_{o_{H}}(u,v) = 0$ if and only if 
\begin{equation}\label{S1}
[\varphi^{1}_{o_{H}}u,\varphi^{1}_{o_{H}}v] = [u,v],\quad\mbox{for all}\; u\in {\mathfrak m}_{\varepsilon}\oplus{\mathfrak k}_{\varepsilon},\; v\in {\mathfrak m}_{\varepsilon/2}.
\end{equation}
From (\ref{varphigg}), equality (\ref{S1}) is equivalent to (\ref{S2}) and the result follows from Lemma \ref{pbrack}.

\end{document}